\newlength{\dhatheight}
\newcommand{\xleftrightarrow}[2][]{\ext@arrow 3359\leftrightarrowfill@{#1}{#2}}
\newcommand{\xdasharrow}[2][->]{
\tikz[baseline=-\the\dimexpr\fontdimen22\textfont2\relax]{
\node[anchor=south,font=\scriptsize, inner ysep=1.5pt,outer xsep=2.2pt](x){#2};
\draw[shorten <=3.4pt,shorten >=3.4pt,dashed,#1](x.south west)--(x.south east);
}
}
\theoremstyle{plain}
\newtheorem*{theorem*}{Theorem}
\newtheorem{theorem}{Theorem}[section]
\newtheorem{prop}[theorem]{Proposition}
\newtheorem{defi}[theorem]{Definition}
\newtheorem{lemma}[theorem]{Lemma}
\newtheorem{corollary}[theorem]{Corollary}
\theoremstyle{definition}
\newtheorem{example}[theorem]{Example}
\newtheorem{conj}[theorem]{Conjecture}
\newtheorem*{corollary*}{Corollary}
\newtheorem*{prop*}{Proposition}
\newtheoremstyle{named}{}{}{\itshape}{}{\bfseries}{.}{.5em}{\thmnote{#3's }#1}
\theoremstyle{named}
\newcommand{\C}{\mathbb{C}}
\newcommand{\Z}{\mathbb{Z}}
\newcommand{\Pro}{\mathbb{P}}
\newcommand{\Q}{\mathbb{Q}}
\newcommand{\A}{\mathbb{A}}
\newcommand{\N}{\mathbb{N}}
\newcommand{\eff}{\operatorname{eff}}
\newcommand{\et}{\textup{\'et}}
\newcommand{\ch}{\text{Chow}}
\newcommand{\alg}{\operatorname{alg}}
\newcommand{\CH}{\operatorname{CH}}
\newcommand{\tors}{\operatorname{tors}}
\theoremstyle{remark}
\newtheorem{remark}[theorem]{Remark}
\newcommand{\info}{{
  \bigskip
  \footnotesize
  \textsc{Departamento de Matemática, Universidad Técnica Federico Santa María, Avenida España 1680, Valparaíso, Chile}\par\nopagebreak
  \textit{E-mail address}: \texttt{pedro.montero@usm.cl }\par\nopagebreak
\vspace{2mm}
  \textsc{ENS de Lyon, UMPA, UMR 5669, 46 all\'ee d'Italie, 69364 Lyon Cedex 07, France}\par\nopagebreak
  \textit{E-mail address}: \texttt{ivan.rosas\_soto@ens-lyon.fr}
  }}
\subjclass[2020]{14C25, 14F20, 14J45, 19E15}
\keywords{Algebraic cycles, \'etale motives, integral Hodge conjecture, motivic cohomology, Fano varieties}
\author{Pedro Montero, Iv\'an Rosas-Soto}
\date{February 2026}
\title[Algebraic Cycles on Fano Varieties of Level One]{Algebraic cycles of some Fano varieties with Hodge structure of level one}
\begin{document}
\maketitle

\begin{abstract}
We study Chow groups and étale motivic cohomology groups of smooth complete intersections with Hodge structures of level one, classified by Deligne and Rapoport, with particular attention to fivefolds. We extend these results to an étale motivic context and recover an analogous finite-dimensionality in the sense of Kimura. We further analyse algebraic cycles on other smooth Fano manifolds with Hodge structures of level one and, as an application, we prove the integral Hodge conjecture for smooth quartic double fivefolds by means of the étale motivic approach.
\end{abstract}

\tableofcontents

\section*{Conventions and notations}
Throughout this paper we will assume that the base field $k$ is of $\text{char}(k)=0$, where in some subsections we will restrict to $k=\C$. For an abelian group $G$ we denote $G_{\tors}$ the subgroup of torsion elements of $G$. For a commutative ring $\A$ we denote $G_{\A}:=G\otimes_\Z \A$ and for an element $g \in G$ we denote $g_{\A}:= g\otimes 1 \in G_{\A}$. 

\section{Introduction}

Chow groups provide important information about the algebro-geometric structure of a variety, which can be used to address different problems in algebraic geometry, such as rationality questions, enumerative problems, and the behaviour of such properties in families, among others. 

One of the open questions about algebraic cycles in a complex smooth projective variety $X$ is the so-called \emph{Hodge conjecture}, which relates the algebro-geometric information of the variety with its analytic counterpart. We refer the interested reader to the survey book \cite{Lew} for a comprehensive introduction to the subject. Originally, the version of the conjecture with integral coefficients was stated as follows: for all smooth projective varieties $X$ over $\C$ and for all $0 \leq i \leq \operatorname{dim}(X)$, the cycle class map  
\[
    c^i:\CH^i(X) \to \operatorname{H}^{2i}_B(X,\Z)
\]  
surjects onto the integral Hodge classes $\operatorname{Hdg}^{2i}(X,\Z)$. In this form, Atiyah and Hirzebruch \cite{AH} provided a counterexample in 1962, consisting of a torsion class in $H^4_B(X,\Z)$, for a Godeaux-Serre variety $X$ of dimension $7$, which is not algebraic. The situation does not improve if one considers only the torsion-free part of Hodge classes of a smooth projective variety: Kollár, in \cite{BCC}, produced the so-called \emph{Trento examples}, obtained by considering very general hypersurfaces of dimension $4$ and degree $125$, where the generator class of $\operatorname{H}^4_B(X,\Z)$ is not algebraic.  

Despite this negative evidence for the integral version of the Hodge conjecture, its rational counterpart remains open to this day: for all smooth projective varieties $X$ over $\C$ and for all $0 \leq i \leq \operatorname{dim}(X)$ the cycle class map  
\[
    c_\Q^i:\CH^i(X)_\Q \to \operatorname{Hdg}^{2i}(X,\Q)
\]  
is surjective. There exists a generalised version, concerning Hodge structures of different weights, which may be stated as follows: let $X$ be a smooth projective variety, let $k \in \N$, and let $H \subset \operatorname{H}^k(X,\Q)$ be a sub-Hodge structure of coniveau $\geq c$ (see also \S \ref{section:Chow} and \cite[Lecture 7]{Lew}). Then there exists a closed algebraic subset $Y\subset X$ of codimension $c$ such that  
\[
    H \subset \ker\big(j^*:\operatorname{H}_B^k(X,\Q) \to \operatorname{H}_B^k(X\setminus Y,\Q)\big).
\]

As we have mentioned, algebraic cycles and topological information also play a role in the study of rationality (or birational) problems. The first birational invariant of smooth projective varieties to recall is the Artin-Mumford invariant, given by $\operatorname{H}^3_B(X,\Z)_{\operatorname{tors}}$, which encodes the topological information of the Brauer group of $X$, in other words, it provides information about the Severi-Brauer varieties of $X$. Another invariant of topological nature is the obstruction to the integral Hodge conjecture in codimension $2$, i.e.\ $(\operatorname{Hdg}^4(X,\Z)/\operatorname{Im}(c^2))_{\operatorname{tors}}$. Similarly, for curve classes, one has that $\operatorname{Hdg}^{2n-2}(X,\Z)/\operatorname{Im}(c^{n-1})$ is a birational invariant of smooth projective varieties. All of these invariants are related to another broad family of birational invariants, the so-called \emph{unramified cohomology groups} introduced by Colliot-Thélène and Ojanguren in \cite{CTO89}, which have been successfully used to measure the failure of the Hodge conjecture for degree four integral classes \cite{CTV} and to produce new counterexamples to rationality questions \cite{CTP16} by means of the decomposition of the diagonal (see \S \ref{section:Chow}). We refer the interested reader to the survey articles \cite{Pey19,Voi19} for an introduction to the subject.

\vspace{2mm}

In this setting, the main result of this article is the following:

\begin{theorem}[{Theorem \ref{thm:Hodge}}]\label{thm:main thm}
Let $f:X\to \mathbb{P}^5$ be a smooth double cover branched along a smooth quartic hypersurface $B\in |\mathcal{O}_{\mathbb{P}^5}(4)|$. Then the integral Hodge conjecture holds for $X$.
\end{theorem}

More precisely, we will observe that the above result holds in a stronger framework by considering algebraic cycles while changing the base topology in the following sense:

\vspace{2mm}

We can obtain Chow groups by taking the hypercohomology of Bloch's complexes. More precisely, for each $j$, the presheaf $z^i(-,j)\colon U \mapsto z^i(U,j)$ associated with Bloch's complex is a sheaf for all topologies $t \in \left\{\operatorname{fppf},\ \et,\ \operatorname{Nis},\ \operatorname{Zar} \right\}$ (see \cite{Ge04}), and hence $z^i(-,\bullet)$ is a complex of sheaves on the small étale, Nisnevich, and Zariski sites of $X$. We define the complex of $t$-sheaves
\[
\Z_X(i)_t := z^i(-,\bullet)_t [-2i].
\]
In this case, by taking $t = \operatorname{Zar}$, the hypercohomology of the complex recovers Chow groups:
\[
\CH^i(X) \simeq \mathbb{H}^{2i}_{\operatorname{Zar}}(X, \Z(i)).
\]
From the perspective of triangulated categories, we can recover Chow groups by using the category of Nisnevich motives as  
\[
\CH^i(X)\simeq \operatorname{Hom}_{\operatorname{DM}(k,\Z)}(M(X),\mathbf{1}(i)[2i]).
\]

All of these isomorphisms are related to the Zariski or Nisnevich topology. As we have mentioned, we may also proceed by considering the étale topology and obtain an étale version of Chow groups. Depending on the reference, this is called \emph{Lichtenbaum cohomology} or \emph{étale motivic cohomology}, which we denote by $\CH^i_\et(X)$. This new approach provides additional tools to study problems about algebraic cycles on smooth projective varieties, in particular the Hodge conjecture. For a smooth projective variety over a field of characteristic zero, we define  
\[
    \CH^i_\et(X):=\operatorname{Hom}_{\operatorname{DM}_\et(k,\Z)}(M_\et(X),\Z(i)[2i])\simeq \mathbb{H}^{2i}_\et(X,\Z(i)).
\]

A first important feature (see \cite{Kahn,CTV}) of the étale Chow groups is that they fit into the following commutative diagram
\[
\begin{tikzcd}
    0 \arrow[r]& \CH^2(X) \arrow[r,"\kappa^2"]\arrow[d]& \CH^2_\et(X) \arrow[d] \arrow[r] & \operatorname{H}_{\operatorname{nr}}^3(X,\Q/\Z(2))\arrow[d] \arrow[r] & 0 \\
    0 \arrow[r] & \operatorname{Im}(c^2)\arrow[r] & \operatorname{H}^4_B(X,\Z)\arrow[r] & \operatorname{H}^4_B(X,\Z)/\operatorname{Im}(c^2)\arrow[r]& 0
\end{tikzcd}
\]
thus measuring the failure of the integral Hodge conjecture in degree $4$.  

More generally, one of the fundamental results that illustrate the relation between algebraic cycles and étale motivic cohomology was given by Rosenschon and Srinivas in \cite{RS}, concerning the Hodge and Tate conjectures. In brief, for a smooth projective variety $X$ over $\C$, and for fixed $i \in \N$, there exists a cycle class map $c_\et^i:\CH_\et^i(X) \to \operatorname{H}_B^{2i}(X,\Z)$, whose image is contained in $\operatorname{Hdg}^{2i}(X,\Z)$. Therefore, we can formulate an étale analogue of the Hodge conjecture, namely: to ask whether the map $c_\et^i:\CH_\et^i(X) \to \operatorname{Hdg}^{2i}(X,\Z)$ is surjective. The first point to note is that, in contrast with the usual cycle class map, on torsion classes we have a surjection  
\[
    c_\et^i\Big|_{\operatorname{tors}}:\CH_\et^i(X)_{\operatorname{tors}} \twoheadrightarrow \operatorname{Hdg}^{2i}(X,\Z)_{\operatorname{tors}}.
\]
This follows from the isomorphism between étale motivic cohomology groups with finite coefficients and étale cohomology, together with the comparison between étale and Betti cohomology with finite coefficients, see \cite{Mil80}. Consequently, we obtain the equivalence
\[
    c^i_\Q:\CH^i(X)_\Q \twoheadrightarrow \operatorname{Hdg}^{2i}(X,\Q) \iff c_\et^i:\CH_\et^i(X) \twoheadrightarrow \operatorname{Hdg}^{2i}(X,\Z).
\]

In this framework, Theorem \ref{thm:main thm} can be reformulated (in a stronger form) by saying that the canonical comparison maps $\kappa^i:\CH^i(X)\to \CH^i_\et(X)$, associated to the change of sites, induce group isomorphisms (see \S \ref{section:Chow etale}).

\vspace{2mm}

This equivalence can be extended to a more general setting by considering Chow motives and étale motives with different coefficients. To this end, let us recall that if $G$ is an abelian group, there is a well-known result stating that if $G\otimes \Q =0$ and $G \otimes \Z/p =0$ for all prime numbers $p$, then $G=0$. In a slightly more general form, thanks to the universal coefficient theorem, for a complex smooth projective variety $X$ we have that if $\operatorname{H}^n_B(X,\Q)=0$ and  $\operatorname{H}^n_B(X,\Z/p)=0$ for all prime numbers $p$, then $\operatorname{H}^n_B(X,\Z)=0$.  

In our motivic context, a similar principle still holds, concerning a family of conservative functors between triangulated categories of motives with different coefficients. Let us consider the category of étale motives over $k$ (for simplicity we assume $k=\bar{k}$), denoted $\operatorname{DM}_\et(k,\Z)$, together with the functors associated with the change of coefficients
\[
\begin{tikzcd}
    \operatorname{DM}_\et(k,\Z) \arrow[r,"\rho_\Q"] \arrow[d,"\rho_\ell",swap] &  \operatorname{DM}_\et(k,\Q)
     \\
     \operatorname{DM}_\et(k,\Z/\ell) & 
\end{tikzcd}
\]
This forms a conservative family of functors (see \cite{CD16}), i.e.\ a morphism of étale motives $f:M \to N \in \operatorname{DM}_\et(k,\Z)$ is an isomorphism if and only if, for all prime numbers $\ell$, the morphism $f_\ell:M/\ell\to N/\ell \in \operatorname{DM}_\et(k,\Z/\ell)$ and $f_\Q:M_\Q \to N_\Q \in \operatorname{DM}_\et(k,\Q)$ are isomorphisms. 

A priori, this result does not appear to simplify the characterisation or decomposition of integral étale motives. However, the work of Cisinski and D\'eglise on étale motives provides a useful perspective by using the following equivalences:

\begin{itemize}
    \item According to \cite{CD16}, the category of motives with finite coefficients is equivalent to the derived category of étale sheaves. This yields the equivalences
    \[
        \operatorname{DM}_\et(k,\Z/\ell)\simeq D(k_\et,\Z/\ell)\simeq (\mathbb{F}_\ell\text{-vect. spaces})^{\Z}.
    \]
    We are not aware of such an explicit description for other motivic theories with finite coefficients.
    \item There is an equivalence of categories between étale and Nisnevich motives when working with $\Q$-coefficients, again by \cite{CD16}:  
    \[
        \operatorname{DM}_\et(k,\Q)\simeq \operatorname{DM}(k,\Q).
    \]
\end{itemize}

With this idea in mind, it is natural to ask whether studying problems concerning algebraic cycles from an étale point of view may yield new refinements, tools, or techniques that could improve our understanding. These results may also suggest that it is easier to characterise étale Chow groups and to decompose integral étale motives than to achieve the corresponding goals with integral Chow motives.

\vspace{2mm}

Once again, we will show that Theorem \ref{thm:main thm} can be extended to a motivic setting (see \S \ref{section:Motives}). To do so, we will use the fact that double quartic fivefolds are examples of projective varieties with \emph{Hodge structure of level one}:

\vspace{2mm}

In connection with rationality problems, one of the major breakthroughs was obtained by Clemens and Griffiths \cite{CG72}, who proved that a smooth cubic threefold is unirational but not rational, relying heavily on the Hodge structure of these varieties (cf. the recent preprint \cite{EdGFS25}, where the authors prove that a very general cubic threefold is not even stably rational). More precisely, for such a threefold $X\subset \mathbb{P}^4$, the Hodge decomposition of its middle cohomology, $\operatorname{H}^3(X,\C) = V\oplus \overline{V}$, splits into two conjugate spaces. Consequently, as in the case of algebraic curves, one can associate to $X$ an \emph{intermediate Jacobian}, which is a principally polarised abelian variety of dimension $5$ in this case. The proof of Clemens and Griffiths proceeds by showing that this abelian variety is not the Jacobian of a curve. More generally, such a decomposition of the middle cohomology gives rise to a weight-one Hodge structure, which can be associated with a principally polarised abelian variety related to homologically trivial algebraic cycles, just as for an algebraic curve $C$, where the Jacobian coincides with the Albanese variety, the latter being dual to the group $\operatorname{Pic}^\circ(C)$ of degree-zero $0$-cycles.  

On the other hand, from the perspective of $0$-cycles, an important class of algebraic varieties is given by Fano varieties (i.e.\ smooth projective varieties with ample anticanonical divisor). These form a concrete class of rationally connected varieties, meaning that all of their $0$-cycles are rationally equivalent, and they arise naturally in the birational classification of higher-dimensional algebraic varieties (see, for instance, \cite{Mur04} for a survey on algebraic cycles on Fano varieties). In regard of these considerations, in this article we focus on Fano varieties with Hodge structures of level one.

From a classification perspective, the simplest Fano varieties arise as hypersurfaces or, more generally, as smooth complete intersections of small degree in projective space. Regarding the classification of varieties of complete intersection with Hodge structures of level one given by Deligne and Rapoport in \cite{Del} and \cite{Rap}:  
\begin{itemize}
    \item Fano threefolds given by:
    \begin{itemize}
        \item Cubic hypersurfaces in $\mathbb{P}^4$,
        \item Complete intersections of a quadric and a cubic in $\mathbb{P}^5$, and
        \item Quartic hypersurfaces in $\mathbb{P}^4$,
    \end{itemize}
    where the associated intermediate Jacobian is a principally polarised variety of dimension $5$, $20$ and $30$, respectively.
    \item Intersections of two quadrics in $\mathbb{P}^{2m+3}$. In this case, the intermediate Jacobian is the Jacobian of a hyperelliptic curve of genus $m+1$, see \cite{reid}.  
    \item Intersections of three quadrics in $\mathbb{P}^{2m+4}$. In this case, the intermediate Jacobian is the Prym variety of a double cover of a plane curve of degree $2m+5$, see \cite{Beau}.
    \item If $X \subset \mathbb{P}^6$ is a smooth cubic hypersurface, then its intermediate Jacobian is a $21$-dimensional principally polarised abelian variety.
\end{itemize}

In this article, we push further the study of algebraic cycles on such varieties, which, \emph{grosso modo}, may be described as follows: for fivefolds, their Chow groups are fully characterised by their structure as complete intersections and their intermediate Jacobians. For further details about the classification of algebraic cycles of such varieties, we refer to \cite{reid, Beau, Mur04}, and more recently to the work of Fu and Tian \cite{FT}, who analysed the case of cubic fivefolds. This latter work provided the starting point for our study of algebraic cycles on Fano fivefolds from the point of view of classification of Fano varieties.

Following the classification of del Pezzo surfaces in the birational geometry of surfaces initiated by the Italian school, Gino Fano obtained classification results in dimension three in the early twentieth century. With the subsequent development of birational geometry in dimension three, Iskovskikh and Mori-Mukai achieved the classification of all smooth Fano threefolds into $108$ deformation families, marking one of the first fundamental applications of the Minimal Model Program. Using the theory of adjunction for polarised varieties and techniques related to the moduli of vector bundles on K3 surfaces, Fujita and Mukai extended this classification to Fano varieties of arbitrary dimension with anticanonical divisor sufficiently divisible in the Picard group. The first currently open case is the classification of Fano fourfolds whose anticanonical divisor is not divisible by another ample divisor; we refer to \cite{IP99} for a general account on the classification problem and properties of Fano varieties. Beyond the complete intersections mentioned earlier, a particularly notable class of Fano varieties in which rationality and Hodge theory have been extensively studied is given by the Gushel-Mukai varieties, which in odd dimensions also carry level one Hodge structures (see, e.g., \cite{DK18,DK20}).

As mentioned before, another interesting case of Fano varieties with sufficiently divisible anticanonical divisor and level one Hodge structure is given by double covers $X$ of $\mathbb{P}^5$ branched along a smooth quartic, i.e.\ smooth \emph{quartic double fivefolds}. In this situation, the rational Hodge conjecture follows from the double covering structure, which allows one to lift rational classes of linear cycles from projective space. From this perspective, one may ask whether such cycles admit an interpretation as linear subspaces inside $X$, in analogy with the classical study of linear subspaces in complete intersections in projective space (see, e.g., \cite{DM98}). This approach was first adopted by Tihomirov \cite{Tih}, who studied lines in the three-dimensional case, and later extended in the doctoral thesis of Skauli \cite{ska}, who investigated lines in double covers of arbitrary dimension, as well as Griffiths groups and the coniveau filtration in these varieties.  

As in the case of complete intersections, one of our main results is the study of Chow groups and étale Chow groups of quartic double fivefolds, through the analysis of linear subspaces (see Appendix~\ref{appendixA}), induced rational maps, and Bloch-Ogus type spectral sequences. The principal applications of this analysis are the verification of the integral Hodge conjecture and the étale motivic decomposition for quartic double fivefolds.

\vspace{2mm}

The article is structured as follows. In \S \ref{section:Chow} we introduce Chow groups, providing definitions of different notions of intermediate Jacobians and the decomposition of the diagonal. In \S \ref{section:Chow etale} we present an introduction to the notion of étale Chow groups, while \S \ref{section:Chow level 1} and \S \ref{section:Chow etale level 1} are devoted to the description of Chow and étale Chow groups of smooth complete intersections with Hodge structures of level one, with special emphasis on fivefolds. In \S \ref{section:Motives}  we observe that some of these results, particularly those concerning the middle cohomology and its associated intermediate Jacobian, lift to the motivic setting. Finally, in \S \ref{section:Other Fanos} we study the algebraic cycles of a quartic double fivefold, where the main difficulty in proving Theorem \ref{thm:main thm} is the case of cycles of codimension 3 (see Theorem \ref{teo:IHC-codim3}).

Moreover, in \S \ref{section:Other Fanos} we obtain several vanishing results for unramified cohomology that allow us to prove the preceding statement (cf. \cite[Theorem 1.5]{Sch19}, where the author observes that there exist unirational varieties of dimension at least four for which the integral Hodge conjecture for codimension-two cycles fails). A posteriori, these results also enable us to deduce relations among different groups of refined unramified cohomology introduced by Schreieder (see \cite[Theorem 1.6]{Sch23} for details). See Remark \ref{rmk:refunramified}.

We also include an Appendix, where by means of classical computations using incidence varieties, we obtain a description of certain Fano schemes of linear-like subvarieties on smooth double covers of projective space.

\section*{Acknowledgements}

The authors would like to thank the organisers of the conference ``Motivic Homotopy in Interaction'', held at CIRM Luminy from 4 to 8 November 2024. It was during this conference that this collaboration began, and we are especially grateful to \textsc{Fr\'ed\'eric D\'eglise}, \textsc{Adrien Dubouloz}, and \textsc{Johannes Nagel} for their constant support. We warmly thank \textsc{Stefan Schreieder} for all his comments and for pointing us to relevant references relating our results to his work on (refined) unramified cohomology groups. We also thank an anonymous referee for the careful reading of the first version of this article and for helpful suggestions that improved several arguments and the exposition.

The first author was partially supported by ANID FONDECYT Regular grant 1231214 and by the ECOS-ANID Project ECOS230044. The second author was partially supported by the French ANR project “HQ-DIAG” (ANR-21-CE40-0015). 

\section{Preliminaries on Chow groups}\label{section:Chow}

Let $X$ be a smooth projective variety over a field $k$ of dimension $n$. We denote the Chow group of $X$ in codimension $i$ by $\CH^i(X)$, i.e., the group of closed subvarieties of $X$ of codimension $i$ modulo rational equivalence. We can grade Chow groups of $X$ by dimension of subvarieties, in which case we have $\CH^{i}(X):= \CH_{n-i}(X)$. 

In the category of smooth projective varieties, Chow groups have nice functorial properties, here we present some of the most common properties used in the article:
\begin{enumerate}
    \item If $f:X \to Y$ is any morphism, then there exists a pull-back morphism $f^*:\CH^i(Y)\to \CH^i(X)$.
    \item If $f:X \to Y$ is a proper map, then we obtain a push-forward map $f_*:\CH_k(X)\to \CH_k(Y)$ given by $f_*(Z) = \deg(Z/f(Z))[f(Z)]$.
    \item Chow groups are dotated of a monoidal structure $\CH^i(X)\times \CH^j(X) \to \CH^{i+j}(X\times X)$. Taking the diagonal embedding $\Delta_X: X\hookrightarrow X\times X$ we obtain the intersection product for two cycles $ Z \in \CH^i(X)$ and $Z'\in \CH^j(X)$ by
    $$Z \cdot Z' = \Delta^*_X(Z\times Z') \in \CH^{i+j}(X).$$ 
    We remark that the intersection product gives a ring structure to $\CH^*(X) = \bigoplus_{i=0}^n \CH^i(X)$. If again $f:X \to Y$ is any morphism, then $f^*$ is functorial with respect to intersection product, i.e., $f^*(Z\cdot Z')=f^*(Z)\cdot f^*(Z')$.
    \item For a projective morphism $f:X\to Y$, we have a projection formula (which is an analogue for push-forwards of the functoriality of pull-back with respect to intersection produts):
    \begin{align*}
        f_*(Z \cdot f^*(Z'))=f_*(Z)\cdot Z' \in \CH(Y), 
    \end{align*}
    where $Z \in \CH(X)$ and $Z'\in \CH(Y)$.
\end{enumerate}

Let $Z \subset X$ be a smooth projective subvariety of $X$, let us denote the $U$ to be the open complement of $Z$. We have a short exact sequence, called the \textit{localization exact sequence}
\begin{align*}
    \CH_i(Z) \xrightarrow{i_*} \CH_i(X) \xrightarrow{j^*}\CH_i(U) \to 0.
\end{align*}

A correspondence from $X$ to $Y$ is a cycle $Z\in \CH(X\times Y)$. A correspondence $Z\in \CH^{t}(X\times Y)$ acts on the cycles on $X$ as follows
\begin{align*}
    Z(T)=(\text{pr}_Y)_*(Z\cdot (T\times Y)) \in \CH^{i+t-n}(Y)
\end{align*}
where $T \in \CH^i(X)$ and $n=\text{dim}(X)$.

\begin{defi}
    We say that a cycle $Z \in \CH^i(X)$ is \textup{algebraically trivial} if there is a smooth irreducible curve $C$ and $W \in \CH^i(C\times X)$ and two points $a$, $b \in C$ such that $W(a)=0$ and $W(b)=Z$. We denote by $\CH^i(X)_{\alg}$ the subgroup of algebraically trivial cycles of codimension $i$ on $X$.
\end{defi}

When $k=\C$ there is a well-defined cycle map $c^i_{\A}:\CH^i(X)_\A \to \operatorname{H}^{2i}_B(X,\A)$ to Betti cohomology for $\A=\Z, \ \Q$, whose image is a subset of the Hodge classes
\begin{align*}
    \text{Hdg}^{2i}(X,\A):=\left\{ \alpha \in \operatorname{H}^{2i}(X,\A) \ | \  \alpha_\C \in \operatorname{H}^{i,i}(X)\right\}.
\end{align*}
We denote $\CH^i(X)_{\hom}:= \text{ker}(c^i)$, where $c^i:=c^i_\mathbb{Z}$. It is clear that $\CH^i(X)_{\text{alg}} \subset \CH^i(X)_{\text{hom}}$, and the quotient group is called the \textit{Griffiths group}, denoted by
\begin{align*}
    \text{Griff}^i(X):=\CH^i(X)_{\hom}/\CH^i(X)_{\alg}.
\end{align*}

\begin{defi}
Assume that $k=\bar{k}$, and let $\Omega/k$ be a field extension of infinite transcendence degree. A map 
$\CH^i(X_\Omega)_{\alg} \to T(\Omega)$ 
with $T$ a variety is said to be \emph{regular} if, for every smooth connected variety $S$ with base point $s_0 \in S(\Omega)$, and for every cycle $\Gamma \in \CH^i(S \times X)$, the composition
\begin{align*}
S(\Omega) &\to \CH^i(X_\Omega)_{\alg} \to T(\Omega) \\
s &\mapsto \Gamma((s) - (s_0))
\end{align*}
is a morphism of algebraic varieties.

We say that $\CH^i(X)_{\alg}$ is \emph{weakly representable} if there exists an abelian variety $A/k$ and a regular isomorphism of groups $\CH^i(X_{\Omega})_{\alg} \xrightarrow{\simeq} A(\Omega)$.
\end{defi}

We recall some relations between birational geometry and Chow groups. We begin by reviewing a few definitions from birational geometry.

\begin{defi}
Let $X$ be a smooth projective variety over $k$ of dimension $n$. We say that $X$ is:
\begin{enumerate}
    \item \emph{Rational} if $X$ is birational to $\mathbb{P}^n_k$.
    \item \emph{Stably rational} if there exists $m \in \mathbb{N}$ such that $X \times \mathbb{P}^m_k$ is rational.
    \item \emph{Unirational} if there exists $N \in \mathbb{N}$ and a dominant rational map $\mathbb{P}^N_k \dashrightarrow X$.
    \item \emph{Rationally connected} if any two general points $p, q \in X$ can be connected by a chain of rational curves.
\end{enumerate}
These notions satisfy the following implications:
\begin{align*}
    \textup{(1)} \Longrightarrow \textup{(2)} \Longrightarrow \textup{(3)} \Longrightarrow \textup{(4)}.
\end{align*}
\end{defi}

In the present article, we use properties of birational invariants related to algebraic cycles. The first and most important of these is that the group of zero-cycles of a smooth projective variety is a stably birational invariant. In particular, for a stably rational variety $X$, we have $\CH_0(X) = \mathbb{Z}$. The converse does not hold in general; however, this condition can still be used to detect when a variety is not (stably) rational, through the so-called \emph{decomposition of the diagonal}. To that end, we introduce the notion of a universally trivial $\CH_0$-group.

\begin{defi}
Let $X$ be a smooth projective variety over $k$ of dimension $n$.
\begin{enumerate}
    \item We say that $X$ has a \textup{universally trivial $\CH_0$-group} if $X$ has a zero-cycle $x$ of degree 1 such that $\CH_0(X_K) = \mathbb{Z} x$ for any field extension $k \subset K$.
    
    \item We say that $X$ admits a \textup{Chow decomposition of the diagonal} if one can write
    \[
    \Delta_X = X \times x + Z \in \CH^n(X \times X),
    \]
    where $Z$ is a cycle on $X \times X$ supported on $D \times X$, with $D \subset X$ a proper closed algebraic subset.
\end{enumerate}
\end{defi}

These two conditions are equivalent (see eg. \cite[Proposition 3.11]{Voi19}). Zero-cycles also encode information about codimension 2 cycles and curves in $X$, depending on their support, as studied by Bloch and Srinivas in \cite{BS83}. To make this precise, let us consider a closed subvariety $Z \subset X$ such that for every field extension $L/k$, we have $\CH_0((X \setminus Z)_L) = 0$. Under these assumptions, by \cite[Proposition 1]{BS83}, there exist $N \in \mathbb{N}^{\geq 1}$, a divisor $D \subset X$, and cycles $\Gamma_1, \Gamma_2 \in \CH^n(X \times X)$, with $\Gamma_1$ supported on $X \times Z$ and $\Gamma_2$ supported on $D \times X$, such that\footnote{Note that if $N = 1$ and $Z = z$ is a point, we recover the classical decomposition of the diagonal.}
\[
N \cdot \Delta_X = \Gamma_1 + \Gamma_2 \in \CH^n(X \times X).
\]

Depending on the dimension of $Z$, we obtain the following consequences:

\begin{table}[h]
\centering
\begin{tabular}{|c|c|}
\hline
\textbf{dim $Z$} & \textbf{Conclusion about codimension 2 cycles on $X$} \\ \hline
$0$ & $\CH_0$-universally trivial \\ \hline
$\leq 1$ & $\CH^2(X)_{\text{alg}}$ is representable \\ \hline
$\leq 2$ & $\text{Griff}^2(X) = 0$, and there exists $N \in \mathbb{N}^{\geq 1}$ such that $N \cdot \text{Griff}_1(X) = 0$ \\ \hline
$\leq 3$ & The cycle class map $c^2: \CH^2(X) \twoheadrightarrow \text{Hdg}^4(X, \mathbb{Z})$ is surjective \\ \hline
\end{tabular}

\vspace{2mm}

\caption{Summary of results from \cite[Theorems 1 and 2]{BS83}.}
\end{table}

We will also use a generalization of the notion of decomposition of the diagonal, which involves cycles of higher dimension in $X$:

\begin{theorem}[{\cite[Theorem 3.18]{VoiB}}]\label{Teovoi}
Let $X$ be a connected smooth projective complex variety of dimension $n$, and let $c \in \mathbb{N}^{\geq 1}$ be such that for all $0 \leq k < c$, the cycle class map
\[
c^{n-k}_\Q : \CH_k(X)_\Q \hookrightarrow \operatorname{H}^{2(n-k)}(X, \Q)
\]
is injective. Then there exists $N \in \mathbb{N}^{\geq 1}$ such that
\[
N \cdot \Delta_X = Z_0 + Z_1 + \cdots + Z_{c-1} + Z' \in \CH^n(X \times X)_\Q,
\]
where each $Z_i$ is supported on $W_i \times W_i'$ with $\dim_\C(W_i) = i$ and $\dim_\C(W_i') = n - i$, and $Z'$ is supported on $X \times T$ with $T \subset X$ and $\textup{codim}_X(T) \geq c + 1$.
\end{theorem}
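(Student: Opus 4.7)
My plan is to proceed by induction on $c \geq 1$, iteratively producing the decomposition by successively peeling off product cycles $Z_i$ supported on $W_i \times W_i'$, while shrinking the support of the residual on the second factor from codimension $\geq c$ to codimension $\geq c+1$ at each inductive step. The underlying mechanism will be the Bloch--Srinivas spreading principle---the same one underlying the table of consequences recalled just before the theorem---combined with the successive cycle-class injectivity hypotheses $c_\Q^{n-k}$ for $k < c$.

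For the base case $c=1$, I would use that the hypothesis $\CH_0(X)_\Q \hookrightarrow \operatorname{H}^{2n}(X,\Q) = \Q$ together with surjectivity of the degree map forces $\CH_0(X)_\Q = \Q \cdot [x_0]$ for any $x_0 \in X$. Restricting $\Delta_X$ to the generic fibre $X_1 \times \eta_{X_2}$ produces a degree-one $0$-cycle in $\CH_0(X_{1, k(X_2)})_\Q$ which coincides with $[x_0]$; spreading out via the localisation sequence then gives $N_0\Delta_X = N_0(x_0\times X) + Z_1'$ with $Z_1'$ supported on $X \times T_1$ for some proper closed $T_1 \subsetneq X$. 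A further application of the same spreading argument---killing the residual cycle class at the generic point of each divisorial component of $T_1$, still using only $\CH_0$-injectivity---would then refine the support to $\operatorname{codim}_X T_1 \geq 2$.

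For the inductive step, I would assume the decomposition has been established up to $c-1$: $N\Delta_X = \sum_{i=0}^{c-2} Z_i + Z''$ with $Z''$ supported on $X \times T''$ and $\operatorname{codim}_X T'' \geq c$. The idea is to focus on the irreducible components of $T''$ of maximal dimension $n-c$ and to restrict $Z''$ to the generic point of each of them. The newly available injectivity hypothesis $\CH_{c-1}(X)_\Q \hookrightarrow \operatorname{H}^{2(n-c+1)}(X,\Q)$---together with the finite-dimensionality of $\CH_{k}(X)_\Q$ for $k < c$ that it entails---should allow these restrictions to be represented, up to multiplication by a nonzero integer, by the restrictions of a cycle supported on a fixed product subvariety $W_{c-1}\times W_{c-1}'$ with $\dim W_{c-1} = c-1$ and $\dim W_{c-1}' = n-c+1$. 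Spreading out and assembling these contributions would produce the new cycle $Z_{c-1}$; subtracting it from $Z''$ yields a residual $Z'$ whose support on the second factor projects onto a subvariety of $X$ of codimension $\geq c+1$, completing the induction.

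The principal obstacle I expect is the spreading-out identification in the inductive step: translating cycle-class information over the function field extensions $k(T''^{\alpha})$ into a global decomposition of cycles on $X \times X$ supported in products of subvarieties of $X$. This will require the localisation sequence for $\CH^\bullet(X\times X)$ filtered by codimension of support on the second factor, together with a careful bookkeeping of multiplicities and dimensions; the role of cycle-class injectivity is precisely to upgrade equality of restrictions at generic points into a genuine product-form decomposition. Rather than reproducing these dimension and multiplicity checks, I would follow Voisin's detailed treatment in \cite{VoiB} to conclude.
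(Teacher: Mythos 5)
Your overall strategy --- induction on $c$, restricting the residual correspondence to the generic points of the maximal--dimensional components of its support on the second factor, using injectivity of the cycle class maps to force those restrictions to be constant cycles, and spreading out via the localisation sequence --- is exactly the argument of the cited source (the paper itself offers no proof of this statement; it only quotes Voisin). However, your sketch contains a genuine off-by-one gap, located precisely at the points where you try to reach codimension $c+1$. If $Z''$ is supported on $X\times T''$ with $\operatorname{codim}_X T''\geq c$ and $W$ is an irreducible component of $T''$ of dimension $n-c$, then $Z''$ has codimension $n-c$ in $X\times W$, so its restriction to the generic fibre of the second projection is a cycle of dimension $c$ (not $c-1$) on $X_{k(W)}$; making it constant requires injectivity of $c^{n-c}_\Q$ on $\CH_c(X)_\Q$, which is not among the hypotheses (these only cover $k<c$). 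The same slip appears in your base case: the restriction of $Z_1'$ to the generic point of a divisorial component of $T_1$ is a $1$-cycle, about which $\CH_0$-injectivity says nothing, so the claimed ``further application \dots still using only $\CH_0$-injectivity'' does not exist. Indeed the bound $\operatorname{codim}_X(T)\geq 2$ for $c=1$ is false already for $X=\Pro^2$: a decomposition $N\Delta_X=N(x\times X)+Z'$ with $Z'$ supported on $X\times(\text{finite set})$ would force $\Delta_X^*$ to act as zero on $\operatorname{H}^2(X,\Q)$, contradicting $\Delta_X^*=\operatorname{id}$.

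The culprit is the statement as transcribed rather than your method: Voisin's Theorem 3.18 concludes $\operatorname{codim}_X(T)\geq c$, and that is what your induction proves once the dimension count is done correctly (step $k$ uses $\CH_k$-injectivity, extracts $Z_k$ supported on $W_k\times W_k'$ with $\dim W_k=k$ and $\dim W_k'=n-k$, and pushes the residual support to codimension $\geq k+1$; the last step permitted by the hypotheses is $k=c-1$). Note that the paper's own application of the result (proof of Theorem \ref{tors}, with $c=p+1$) only invokes $\operatorname{codim}_X(T)\geq p+1=c$, consistent with Voisin's version. So: state the conclusion with $\geq c$, delete the spurious extra refinement in the base case, and shift the inductive step down by one so that the newly available hypothesis $\CH_{c-1}(X)_\Q\hookrightarrow \operatorname{H}^{2(n-c+1)}(X,\Q)$ is applied to the $(n-c+1)$-dimensional components of a residual support of codimension $\geq c-1$. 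You should also make explicit the standard but non-trivial point that injectivity of $c^{n-k}_\Q$ over $\C$ persists over the function fields $k(W)$, via the fact that it makes $\CH_k(X)_\Q$ a finite-dimensional $\Q$-vector space unchanged under extension of the base field; with those corrections your outline matches the cited proof.
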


In the context described in the introduction, we will be interested in various Jacobians associated with Hodge structures. Let us first recall the notion of the level of a Hodge structure:

\begin{defi}
    Let $\displaystyle H_\mathbb{C}=\bigoplus_{p+q=k} H^{p,q}$ be a Hodge structure of weight $k$. The associated Hodge filtration is
    \[
        F^r(H)=\bigoplus_{p\geq r} H^{p,q}.
    \]
    The \emph{level} of a nonzero Hodge structure $H$ is the integer
    \[
        \ell := \max \{\, |p-q|,\;  H^{p,q}\neq 0\}.
    \]
    i.e., a Hodge structure $H$ of weight $k$ has level at most $k-2c$ if and only if $F^c H=H$.
\end{defi}

Recall that if $G$ is an abelian group and $X$ a smooth complex projective variety, the coniveau filtration on $\operatorname{H}^k(X,G)$ is defined by
\[
\operatorname{N}^c \operatorname{H}^k(X,G) = \sum_Z \ker \left\{\operatorname{H}^k(X,G) \to \operatorname{H}^k(X \setminus Z,G)\right\},
\]
where the sum ranges over all closed subvarieties $Z \subset X$ of codimension at least $c$ (see e.g. \cite{BO74}). With this notation, we introduce the various Jacobians that will play a role in our analysis.

\begin{defi}
    Let $H$ be a pure Hodge structure of odd weight $2m+1$ with $m \in \N$. The \emph{intermediate Jacobian} of $H$ is the complex torus
    \[
        J(H)=\overline{F^{m+1}H}/H_\Z.
    \]
    In particular, we introduce the following notations:
    \begin{enumerate}
        \item For $X \in \mathrm{SmProj}_\C$ and $H=\operatorname{H}^{2m+1}(X,\Q)$, the intermediate Jacobian of $X$ is
        \[
            J^{2m+1}(X) := J(H).
        \]
        \item For $H = \operatorname{N}^1\operatorname{H}^{2m+1}(X,\Q)$, we define the \emph{algebraic intermediate Jacobian} as
        \[
            J^{2m+1}_{\alg}(X) := J(H).
        \]
        \item Let $H^{2m+1}_{\mathrm{Hdg}}(X)$ be the maximal Hodge substructure of level $1$ contained in $\operatorname{H}^{2m+1}(X,\Q)$. Then we set $J^{2m+1}_H(X) := J\!\left(H^{2m+1}_{\mathrm{Hdg}}(X)\right)$.
        \item The \emph{Walker intermediate Jacobian} of $X$ is defined by
        \[
            J^{2m+1}_W(X) := J(\operatorname{N}^{m+1} \operatorname{H}^{2m+1}(X,\Q)).
        \]
    \end{enumerate}
\end{defi}

\begin{remark}
    \begin{enumerate}
        \item If $H = H^{2m+1}_{\mathrm{Hdg}}(X)$ or $H = \operatorname{N}^1\operatorname{H}^{2m+1}(X,\Q)$, then the Hodge--Riemann bilinear relations on $\operatorname{H}^{2m+1}(X,\Q)$ induce a polarization on $J(H)$, making it a principally polarized abelian variety. In particular, if $\operatorname{H}^{2m+1}(X,\Q)$ has level $1$, then $J^{2m+1}(X)$ is itself an abelian variety.
        \item There are natural inclusions of complex tori
        \[
            J_{\alg}^i(X)\hookrightarrow J^i_{H}(X) \hookrightarrow J^i(X).
        \]
        \item There exists an Abel--Jacobi map
        \[
            \Phi_X^i: \CH^i(X)_{\hom} \to J^{2i-1}(X).
        \]
        Restricting this map to $\CH^i(X)_{\alg}$ yields an isomorphism of abelian varieties
        \[
            \Phi^i_X(\CH^i(X)_{\alg}) \simeq J^{2i-1}_{\alg}(X).
        \]
        Moreover, the Abel--Jacobi map fits into the exact sequence
        \[
            0 \to J^{2i-1}(X) \xrightarrow{\Phi_X^i} \operatorname{H}^{2i}_D(X,\Z(i)) \to \mathrm{Hdg}^{2i}(X,\Z)\to 0,
        \]
        where $H^{2i}_D(X,\Z(i))$ denotes the Deligne cohomology group of $X$ (see \cite[Corollaire 12.27]{Voi}).
        \item If $C$ is a smooth projective curve, we omit the index and simply write $J(C)$ for its Jacobian.
    \end{enumerate}
\end{remark}

\section{Preliminaries on \'etale Chow groups}\label{section:Chow etale}

In this section, we recall the definition and fundamental properties of the étale analogues of Chow groups, and gather some general results concerning these groups for smooth complete intersections of Hodge level one.

\subsection{\'Etale Chow groups} We consider an étale version of Chow groups, namely the well-known Lichtenbaum cohomology groups, which are defined as the hypercohomology of the étale sheafification of Bloch’s cycle complex. These groups were characterized by Rosenschon and Srinivas in \cite{RS} using étale hypercoverings.

In this context, we let $\text{Sm}_k$ denote the category of smooth separated $k$-schemes over a field $k$. We denote by $z^i(X,\bullet)$ the cycle complex of abelian groups defined by Bloch:
\begin{align*}
z^i(X,\bullet): \cdots \to z^i(X,j) \to \cdots \to z^i(X,1)\to z^i(X,0) \to 0,
\end{align*}
where $z^i(X,j)$ is the subgroup generated by closed subvarieties of codimension $i$ of $X\times \Delta^j$ that intersect properly with $X \times F$ for each face $F$ of the standard simplex $\Delta^j$, and the differentials are given by the alternating sum of the pullbacks of the face maps.

We remind the reader that the homology groups of this complex define the higher Chow groups, namely $\text{CH}^i(X,2i-j) := \text{H}_j(z^i(X,\bullet))$, and in particular, $\CH^i(X) \simeq \CH^i(X,0)$.

Let us recall that $z^i(X,j)$ and the complex $z^i(X,\bullet)$ are covariantly functorial for proper morphisms and contravariantly functorial for flat morphisms between smooth $k$-schemes (see \cite[Proposition 1.3]{Blo}). Therefore, for any topology $t \in \left\{\text{flat},\ \et,\ \text{Nis},\ \text{Zar} \right\}$, we obtain a complex of $t$-presheaves
\[
z^i(-,\bullet)\colon U \mapsto z^i(U,\bullet).
\]
In particular, for each $j$, the presheaf $z^i(-,j)\colon U \mapsto z^i(U,j)$ is a sheaf for all topologies $t \in \left\{\text{fppf},\ \et,\ \text{Nis},\ \text{Zar} \right\}$ (see \cite[Lemma 3.1]{Ge04}), and hence $z^i(-,\bullet)$ is a complex of sheaves on the small étale, Nisnevich, and Zariski sites of $X$.

We define the complex of $t$-sheaves
\[
R_X(i)_t := \left(z^i(-,\bullet)_t \otimes R \right)[-2i],
\]
where $R$ is an abelian group. In what follows, we will restrict to the cases $t = \text{Zar}$ or $t = \et$, and compute the corresponding hypercohomology groups $\mathbb{H}^j_t(X, R_X(i)_t)$.

For instance, taking $t = \text{Zar}$ and $R = \Z$, if follows from \cite[Proposition 4.2.9]{Voe00} that the hypercohomology of the complex recovers the higher Chow groups:
\[
\CH^i(X,2i-j) \simeq \mathbb{H}^j_{\text{Zar}}(X, \Z(i)).
\]
We denote the motivic and Lichtenbaum cohomology groups with coefficients in $R$ as
\[
\text{H}_M^j(X, R(i)) = \mathbb{H}^j_{\text{Zar}}(X, R(i)), \quad \text{H}_{M,\et}^j(X, R(i)) = \mathbb{H}^j_{\et}(X, R(i)),
\]
and in particular, we set $\CH^i_\et(X) := \text{H}^{2i}_{M,\et}(X, \Z(i))$.

Let $\pi \colon X_\et \to X_{\text{Zar}}$ be the canonical morphism of sites. The adjunction morphism $\Z_X(i) \to R\pi_* \pi^* \Z_X(i) = R\pi_* \Z_X(i)_\et$ induces the \emph{comparison morphisms}
\[
\kappa^{j,i} \colon \text{H}_M^j(X, \Z(i)) \longrightarrow \text{H}_{M,\et}^j(X, \Z(i))
\]
for all bidegrees $(j,i) \in \Z^2$.

Thanks to \cite[Theorem 6.18]{VV}, we know that the comparison map $\kappa^{j,i}$ is an isomorphism for $j \leq i+1$ and a monomorphism for $j \leq i+2$. For simplicity, we write 
\[
\kappa^i := \kappa^{2i,i}:\CH^i(X)\to \CH^i_\et(X)
\]
for the comparison map between Chow and Chow \'etale groups.

In some cases, it is possible to obtain more information about the Lichtenbaum cohomology groups and the comparison map between them and the higher Chow groups. For instance, there is a quasi-isomorphism $R_X(0)_\et = R$, where the right-hand side is regarded as an étale sheaf. Thus, the Lichtenbaum cohomology agrees with the usual étale cohomology, i.e.,
\[
\text{H}_{M,\et}^j(X, R(0)) \simeq \text{H}^j_\et(X, R)
\]
for all $j \in \mathbb{N}$, and in particular, $\text{CH}^0_\et(X) = \Z^{\pi_0(X)}$.

In the next case, $i = 1$, since there is a quasi-isomorphism of complexes $\Z_X(1)_\et \simeq \mathbb{G}_m[-1]$, we obtain the following isomorphisms:
\[
\text{CH}^1(X) \simeq \text{CH}_\et^1(X) = \text{Pic}(X), \quad
\text{H}_{M,\et}^3(X, \Z(1)) \simeq \text{H}_\et^3(X, \mathbb{G}_m[-1]) = \text{Br}(X),
\]
where $\text{Pic}(X)$ and $\text{Br}(X)$ denote the Picard and Grothendieck-Brauer groups of $X$, respectively. In fact, for bidegree $(j,1)$, by \cite[Corollary 3.4.3]{V}, there exists an isomorphism
\[
\text{H}_M^j(X, \Z(1)) \simeq \text{H}_{\text{Zar}}^{j-1}(X, \mathbb{G}_m),
\]
because the quasi-isomorphism $\Z_X(1) \simeq \mathbb{G}_m[-1]$ also holds in the Zariski topology.

As a particular case, consider
\[
\text{H}_M^3(X, \Z(1)) \simeq \text{H}_{\text{Zar}}^2(X, \mathbb{G}_m) = 0,
\]
since $\text{H}_M^j(X, \Z(i)) = 0$ for $j > 2i$, whereas the Grothendieck-Brauer group $\text{Br}(X)$ is not always trivial (for instance, if $X$ is an Enriques surface; see eg. \cite{Bea09}).

In bi-degree $(4,2)$, the comparison map is known to be injective but, in general, not surjective. More precisely, we have a short exact sequence
\[
0 \to \text{CH}^2(X) \xrightarrow{\kappa^2} \text{CH}^2_\et(X) \to \text{H}^3_{\text{nr}}(X, \Q/\Z(2)) \to 0,
\]
where $\mathcal{H}^3_\et(\Q/\Z(2))$ is the Zariski sheaf associated to the presheaf $U \mapsto \text{H}_\et^3(U, \Q/\Z(2))$, and the unramified part is given by the global sections $\text{H}_{\text{nr}}^3(X, \Q/\Z(2)) = \Gamma(X, \mathcal{H}^3_\et(\Q/\Z(2)))$. For a proof, see \cite[Proposition 2.9]{Kahn}.

If $k = \C$, the latter group surjects onto the torsion subgroup of the obstruction, in codimension 2, to the integral Hodge conjecture, i.e.,
\[
\text{H}_{\text{nr}}^3(X, \Q/\Z(2)) \twoheadrightarrow 
\left( \text{Hdg}^4(X, \Z) / \operatorname{Im}\left\{ c^2: \CH^2(X) \to \text{H}_B^4(X, \Z(2)) \right\} \right)_{\text{tors}},
\]
and hence, in general, it is nonzero and the comparison map $\kappa^2$ is not surjective. For further details, see \cite[Théorème 3.7]{CTV}.

\begin{remark}
For rational coefficients, the adjunction morphism
\[
\Q_X(i) \to R\pi_* \Q_X(i)_\et
\]
turns out to be an isomorphism (see \cite[Théorème 2.6]{Kahn}). Therefore, we have
\[
\text{H}_M^j(X, \Q(i)) \simeq \text{H}_{M,\et}^j(X, \Q(i))
\]
for all $(j,i) \in \Z^2$. In particular,
\[
\text{CH}^i(X)_\Q \simeq \text{CH}_\et^i(X)_\Q
\]
for all $i \in \N$.
\end{remark}

If $R$ is a torsion group, then the Lichtenbaum cohomology can be computed via étale cohomology. More precisely, let $\ell$ be a prime number, $r \in \N^{\geq 1}$, and $R = \Z/\ell^r$. Then there is a quasi-isomorphism
\[
(\Z/\ell^r)_X(i)_\et \xrightarrow{\sim} \mu_{\ell^r}^{\otimes i}.
\]

Passing to the direct limit, we also obtain a quasi-isomorphism
\[
(\Q_\ell/\Z_\ell)_X(i)_\et \xrightarrow{\sim} \varinjlim_r \mu_{\ell^r}^{\otimes i},
\]
and finally we define
\[
(\Q/\Z)_X(i)_\et := \bigoplus_\ell (\Q_\ell/\Z_\ell)_X(i)_\et \xrightarrow{\sim} \Q/\Z(i)_\et.
\]

The following result is a well-known consequence, referred to as the \emph{Suslin rigidity theorem}, concerning the morphism $\Z_X(i) \to R\pi_* \Z_X(i)_\et$ for $i \geq \dim(X)$ over $k = \bar{k}$.

\begin{prop}[{\cite[Theorem 4.2]{sus}, \cite[Section 2]{Geis}}]\label{lemGe}
Let $X$ be a smooth projective variety of dimension $n$ over an algebraically closed field $k$. Then, for $i \geq n$, the canonical morphism of sites $\pi \colon X_\et \to X_{\text{Zar}}$ induces a quasi-isomorphism of complexes of Zariski sheaves:
\[
\Z_X(i) \to R\pi_* \Z_X(i)_\et.
\]
In particular, $\operatorname{CH}^n(X) \simeq \operatorname{CH}_\et^n(X)$.
\end{prop}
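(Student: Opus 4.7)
The plan is to prove the quasi-isomorphism $\Z_X(i) \to R\pi_*\Z_X(i)_\et$ for $i \geq n$; the assertion $\CH^n(X)\simeq \CH^n_\et(X)$ then follows at once by taking Zariski hypercohomology in degree $2n$ of the resulting equivalence with $i=n$.

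First I would reduce to torsion coefficients. From the short exact sequence $0\to \Z \to \Q \to \Q/\Z \to 0$ we get a distinguished triangle $\Z_X(i)\to \Q_X(i)\to (\Q/\Z)_X(i)\to \Z_X(i)[1]$ and its étale counterpart. The rational comparison $\Q_X(i)\xrightarrow{\sim} R\pi_*\Q_X(i)_\et$ holds in general (see the remark following Remark~3.2, citing \cite{CD16,Kahn}), so the cone of the integral comparison map agrees with the cone of $(\Q/\Z)_X(i)\to R\pi_*(\Q/\Z)_X(i)_\et$. Splitting into $\ell$-primary parts and passing to the direct limit over $r$, it is enough to show, for every prime $\ell$ and every $r\geq 1$, that the comparison map
\[
(\Z/\ell^r)_X(i) \longrightarrow R\pi_*\mu_{\ell^r}^{\otimes i}
\]
is a quasi-isomorphism of complexes of Zariski sheaves.

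Next I would invoke the Beilinson--Lichtenbaum theorem of Voevodsky, which gives
\[
(\Z/\ell^r)_X(i)\;\xrightarrow{\sim}\;\tau_{\leq i} R\pi_*\mu_{\ell^r}^{\otimes i}.
\]
It therefore suffices to show that the Zariski sheaves $\mathcal{H}^j_\et(\mu_{\ell^r}^{\otimes i}) := R^j\pi_*\mu_{\ell^r}^{\otimes i}$ vanish for $j > i$, under our standing hypothesis $i \geq n$. For this step I would use the Bloch--Ogus--Gabber machinery: the sheaves $\mathcal{H}^j_\et(\mu_{\ell^r}^{\otimes i})$ admit a Cousin-type Gersten resolution whose term in codimension $c$ is $\bigoplus_{x\in X^{(c)}} (i_x)_* H^{j-c}(k(x),\mu_{\ell^r}^{\otimes (i-c)})$. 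Since $k = \bar{k}$ has characteristic zero, the residue field $k(x)$ at a point of codimension $c$ has transcendence degree $n-c$ over $k$, so its absolute Galois group has $\ell$-cohomological dimension at most $n-c$ (classical Tate/Serre bound). Thus every term of the Gersten complex is zero as soon as $j-c > n-c$, i.e.\ $j > n$; this covers the entire range $j > i \geq n$ and yields the desired vanishing.

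The main obstacle, and where care is required, is the Bloch--Ogus/Gersten input: one must verify that the coniveau filtration on the étale sheaves $\mu_{\ell^r}^{\otimes i}$ indeed gives a resolution of the Zariski sheaves $\mathcal{H}^j_\et(\mu_{\ell^r}^{\otimes i})$ whose terms are controlled by Galois cohomology of residue fields. The Gabber refinement of Bloch--Ogus provides exactly this, and the bound on the cohomological dimension of function fields over algebraically closed base fields of characteristic zero then closes the argument. Once $\Z_X(i)\xrightarrow{\sim} R\pi_*\Z_X(i)_\et$ is established, taking $\mathbb{H}^{2n}_{\mathrm{Zar}}$ with $i=n$ gives the isomorphism $\CH^n(X)\simeq \CH^n_\et(X)$ as claimed.
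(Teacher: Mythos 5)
Your argument is correct. The paper itself gives no proof of this proposition---it is quoted directly from Suslin and Geisser---and your reconstruction (reduce to torsion coefficients via the rational comparison, apply Beilinson--Lichtenbaum to identify $(\Z/\ell^r)_X(i)$ with $\tau_{\leq i}R\pi_*\mu_{\ell^r}^{\otimes i}$, and kill the truncation error for $j>i\geq n$ using the Gersten/Bloch--Ogus injection into the generic point together with $\mathrm{cd}_\ell(k(X))\leq n$) is precisely the standard argument found in those references.
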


Throughout the present article, we will use various cycle class maps, ranging from étale motivic and Lichtenbaum cohomology groups to Betti and étale cohomology:
\[
\begin{aligned}
&c^i\colon \CH^i(X) \to \text{H}^{2i}_B(X,\Z(i)), \quad (\text{resp. } c^i_{\Q}\colon \CH^i(X)_{\Q} \to \text{H}^{2i}_B(X,\Q(i))) \\
&c^i_{\et}\colon \CH^i_\et(X) \to \text{H}^{2i}_B(X,\Z(i)),
\end{aligned}
\]
the cycle class map and the Lichtenbaum cycle class map (see \cite{RS}). These maps satisfy the property that, as in the classical case of Chow groups, we have
\[
\operatorname{Im}(c^i_\et) \subset \operatorname{Hdg}^{2i}(X, \Z),
\quad \text{but} \quad
\operatorname{Im}(c^i_\et)_{\text{tors}} \simeq \operatorname{Hdg}^{2i}(X, \Z)_{\text{tors}}.
\]

\begin{remark}
Let us briefly discuss the functoriality of étale Chow groups: Since we work over a base field $k$ with $\operatorname{char}(k) = 0$, we can use an equivalent definition (see \cite[Theorem 7.1.2]{CD16}) of étale Chow groups in terms of the triangulated category of étale motives $\operatorname{DM}_\et(k, \Z)$. For more details, we refer the reader to \cite{Ayo} and \cite{CD16}. One defines the \textit{étale motivic cohomology} group of bidegree $(j,i)$ as
\[
\text{H}_{M,\et}^j(X, \Z(i)) := \operatorname{Hom}_{\operatorname{DM}_\et(k, \Z)}(M_\et(X), \Z(i)[j]).
\]
Thanks to the six functor formalism (see \cite{CD19}), we can recover the standard formalism for Chow groups (pull-backs, push-forwards of proper maps, intersection products, and the projection formula), now in the context of étale Chow groups. See \cite{RoSo} for further details on the construction and compatibility properties.
\end{remark}

An \emph{étale correspondence} from $X$ to $Y$ is a cycle $Z \in \CH_\et(X \times Y)$. A correspondence $Z \in \CH_\et^t(X \times Y)$ acts on cycles in $X$ via
\[
Z(T) := (\operatorname{pr}_Y)_*(Z \cdot (T \times Y)) \in \CH_\et^{i + t - n}(Y),
\]
where $T \in \CH_\et^i(X)$ and $n = \dim(X)$.

\begin{defi}
We say that a cycle $Z \in \CH_\et^i(X)$ is \emph{algebraically trivial} if there exists a smooth irreducible curve $C$, a cycle $W \in \CH_\et^i(C \times X)$, and two points $a, b \in C$ such that $W(a) = 0$ and $W(b) = Z$. We denote the subgroup of algebraically trivial cycles of codimension $i$ by $\CH_\et^i(X)_{\alg}$.
\end{defi}

We denote by $\CH_\et^i(X)_{\text{hom}} := \ker(c^i_\et)$. It is clear that
\[
\CH_\et^i(X)_{\text{alg}} \subset \CH_\et^i(X)_{\text{hom}},
\]
although it is not always obvious that this inclusion is strict. However, $\CH_\et^i(X)_{\text{alg}}$ is always a divisible group, since $\CH_\et^i(X)_{\text{hom}}$ is the maximal divisible subgroup of $\CH_\et^i(X)$. The corresponding quotient is called the \textit{étale Griffiths group}, and is denoted as
\[
\operatorname{Griff}_\et^i(X) := \CH_\et^i(X)_{\text{hom}} / \CH_\et^i(X)_{\text{alg}}.
\]

\subsection{\'Etale motivic cohomology of complete intersections} Let us consider $X = X_{(d_1, \ldots, d_r)} \subseteq \mathbb{P}^{n+r}$ to be a smooth complete intersection of multidegree $(d_1, \ldots, d_r)$ and dimension $n$, defined over the field of complex numbers $\mathbb{C}$. By Whitney's formula, the total Chern class of the tangent bundle of $X$, denoted by $\mathcal{T}_X$, is given by
\begin{align}\label{chern}
    c(\mathcal{T}_X) = \left. \frac{(1+h)^{n+r+1}}{\prod_{i=1}^r(1+d_i h)} \right|_X = \frac{(1+\xi_X)^{n+r+1}}{\prod_{i=1}^r(1+d_i \xi_X)},
\end{align}
where $\xi_X$ denotes the hyperplane class restricted to $X$.

Then, the Euler characteristic $\chi(X)$ is equal to $\prod_{i=1}^r d_i$ times the coefficient of $h^n$ in the expression \eqref{chern}, since $\xi_X = \left(\prod_{i=1}^r d_i\right) h \in \CH^1(\mathbb{P}^{n+r})$. This coefficient can be computed as follows:
\begin{align*}
    \frac{1}{\prod_{i=1}^r(1+d_i h)} = \prod_{i=1}^r \left(1 - d_i h + d_i^2 h^2 - \cdots + (-1)^k d_i^k h^k + \cdots \right).
\end{align*}
Let us define
\[
p_n(d_1, \ldots, d_r) := (-1)^n \sum_{1 \leq i_1 \leq \cdots \leq i_n \leq r} d_{i_1} \cdots d_{i_n}.
\]
We also expand:
\[
(1+h)^{n+r+1} = \sum_{i=0}^{n+r+1} \binom{n+r+1}{i} h^i.
\]
Hence, the Euler characteristic of $X$ is given by:
\[
\chi(X) = \left(\prod_{i=1}^r d_i \right) \left( \sum_{i=0}^{n} \binom{n+r+1}{i} \cdot p_{n-i}(d_1, \ldots, d_r) \right).
\]

Since the cohomology groups of a smooth complete intersection are torsion-free, and the alternating sum of their Betti numbers equals $\chi(X)$, we deduce:
\[
b_n(X) = (-1)^n \left( \left(\prod_{i=1}^r d_i \right) \left( \sum_{i=0}^{n} \binom{n+r+1}{i} \cdot p_{n-i}(d_1, \ldots, d_r) \right) - n - 1 \right).
\]

\begin{prop}
Let $X = X_{(d_1, \ldots, d_r)} \subset \mathbb{P}^{n+r}$ be a smooth complete intersection. Then all the groups $\CH^i_\et(X)$ are torsion-free, except possibly when $n = 2m + 1$ is odd and $i = m + 1$.
\end{prop}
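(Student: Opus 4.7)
The plan is to bound $\CH^i_\et(X)_{\tors}$ by a topological object — the Betti cohomology of $X$ with finite coefficients — and then to invoke the classical description of the cohomology of a smooth complete intersection. For any prime $\ell$ and any $r \geq 1$, the short exact sequence $0 \to \Z(i) \xrightarrow{\ell^r} \Z(i) \to \Z/\ell^r(i) \to 0$ of étale motivic complexes produces a Bockstein long exact sequence whose relevant portion reads
\[
H^{2i-1}_{M,\et}(X, \Z/\ell^r(i)) \to \CH^i_\et(X) \xrightarrow{\ell^r} \CH^i_\et(X),
\]
so that, by extracting the kernel of the right-most map, $\CH^i_\et(X)[\ell^r]$ is a quotient of $H^{2i-1}_{M,\et}(X, \Z/\ell^r(i))$. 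Using the quasi-isomorphism $(\Z/\ell^r)_X(i)_\et \simeq \mu_{\ell^r}^{\otimes i}$ recalled in the preliminaries, together with the Artin comparison theorem over $\C$, one obtains the identification
\[
H^{2i-1}_{M,\et}(X, \Z/\ell^r(i)) \simeq H^{2i-1}_\et(X, \mu_{\ell^r}^{\otimes i}) \simeq H^{2i-1}_B(X, \Z/\ell^r).
\]

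The next step is to combine this with the classical fact that, for a smooth complete intersection $X \subset \mathbb{P}^{n+r}$, the integral Betti cohomology $H^k(X,\Z)$ is torsion-free in every degree and vanishes for every odd $k \neq n$; this follows from the Lefschetz hyperplane theorem together with Poincaré duality applied on both sides of the middle degree. Hence, whenever $2i-1 \neq n$, one has $H^{2i-1}_B(X,\Z) = 0$ and $H^{2i}_B(X,\Z)$ is torsion-free, so the universal coefficient theorem forces
\[
H^{2i-1}_B(X, \Z/\ell^r) \simeq \bigl(H^{2i-1}_B(X,\Z) \otimes \Z/\ell^r\bigr) \oplus H^{2i}_B(X,\Z)[\ell^r] = 0.
\]
Combining the two observations, $\CH^i_\et(X)[\ell^r] = 0$ for every prime $\ell$ and every $r \geq 1$, and therefore $\CH^i_\et(X)_{\tors} = 0$.

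Finally, the condition $2i-1 \neq n$ fails in exactly the stated exceptional case: if $n$ is even, then the odd integer $2i-1$ is never equal to $n$; if $n = 2m+1$ is odd, then $2i-1 = n$ if and only if $i = m+1$. This matches the exception in the statement, where the middle cohomology of $X$ genuinely contributes torsion classes through the intermediate Jacobian, to be analysed in the subsequent sections. I do not expect any serious obstacle: the only delicate ingredient is the torsion-freeness of the middle integral cohomology of a smooth complete intersection; beyond that, the argument is a direct application of the Bockstein sequence, the rigidity quasi-isomorphism with $\mu_{\ell^r}$-coefficients, the Artin comparison theorem, and the Lefschetz hyperplane theorem, all of which are available in the earlier parts of the paper.
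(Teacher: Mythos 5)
Your proof is correct, but it takes a different route from the paper's. You argue directly: the Bockstein sequence attached to $0 \to \Z(i) \xrightarrow{\ell^r} \Z(i) \to \Z/\ell^r(i) \to 0$ exhibits $\CH^i_\et(X)[\ell^r]$ as a quotient of $\operatorname{H}^{2i-1}_{M,\et}(X,\Z/\ell^r(i))$, the rigidity quasi-isomorphism $(\Z/\ell^r)_X(i)_\et \simeq \mu_{\ell^r}^{\otimes i}$ and Artin comparison identify this with $\operatorname{H}^{2i-1}_B(X,\Z/\ell^r)$, and the classical topology of smooth complete intersections (odd cohomology concentrated in the middle degree, no integral torsion anywhere) kills this group exactly when $2i-1\neq n$, which matches the stated exception. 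The paper instead first uses $\operatorname{Hdg}^{2i}(X,\Z)_{\tors}=0$ to reduce all torsion to $\CH^i_\et(X)_{\hom}$, invokes the Rosenschon--Srinivas identification of $\CH^i_\et(X)_{\hom,\tors}$ with $J^{2i-1}(X)_{\tors}$ (which vanishes off the middle degree), and then uses divisibility of $\CH^i_\et(X)_{\hom}$ to conclude it is uniquely divisible and to obtain the splitting $\CH^i_\et(X)\simeq \operatorname{Im}(c^i_\et)\oplus \CH^i(X)_{\hom,\Q}$. Both arguments rest on the same topological input; yours is more elementary and self-contained, essentially unwinding the Bockstein computation that underlies the Rosenschon--Srinivas torsion statement, while the paper's version delivers as a by-product the finer structural decomposition of $\CH^i_\et(X)$ that is reused later in the article.
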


\begin{proof}
Assume we are not in the exceptional case $\dim(X) = 2m + 1$ and $i = m + 1$. The cohomology groups of $X$ are torsion-free in every degree by Lefschetz hyperplane section theorem, hence so are their integral Hodge structures. It follows that
\[
\CH^i_\et(X)_{\text{tors}} \simeq \CH^i_\et(X)^{\text{hom}}_{\text{tors}},
\]
independently of $i$. In particular, we have an isomorphism
\[
\CH^i_\et(X)_{\text{tors}} \simeq J^{2i-1}(X)_{\text{tors}},
\]
where $J^i(X)$ is the intermediate Jacobian. By our assumptions on $i\neq m+1$, we have $\text{H}^{2i+1}(X, \Z) = 0$, and thus $J^{2i+1}(X) = 0$. Hence, $\CH^i_\et(X)^{\text{hom}}$ is torsion-free for every $i\neq m+1$. Since this group is divisible\footnote{In fact, it is the maximal divisible subgroup of $\CH^i_\et(X)$.}, it must be uniquely divisible, i.e., a $\Q$-vector space. Therefore, the short exact sequence
\[
0 \to \CH^i_\et(X)_{\text{hom}} \to \CH^i_\et(X) \to \operatorname{Im}(c^i_\et) \to 0
\]
splits, and using the isomorphism $\CH^i_\et(X)_\Q \simeq \CH^i(X)_\Q$, we conclude:
\[
\CH^i_\et(X) \simeq \operatorname{Im}(c^i_\et) \oplus \CH^i_{\text{hom}}(X)_\Q,
\]
where $\operatorname{Im}(c^i_\et)_{\text{tors}} \simeq \operatorname{Hdg}^{2i}(X, \Z)_{\text{tors}}=0$ in this case. 
\end{proof}

\begin{remark}
\begin{enumerate}
    \item In the excluded case $\dim(X) = 2m+1$ and $i = m+1$, we have:
    \[
    \CH_\et^{m+1}(X)_{\text{tors}} \simeq J^{2m+1}(X)_{\text{tors}} \simeq (\Q/\Z)^{\oplus b_{2m+1}(X)}.
    \]
    
    \item If $\dim(X) = 2m+1$, then the Hodge conjecture holds (rationally). By the equivalence in \cite{RS}, we have:
    \[
    0 \to \CH^i_\et(X)_{\text{hom}} \to \CH^i_\et(X) \to \Z[h^i] \to 0,
    \]
    which splits via a natural section. The same holds in even dimension, except possibly in the middle degree, where the Hodge conjecture remains open.
\end{enumerate}
\end{remark}

\section{Chow groups of complete intersections with Hodge level one}\label{section:Chow level 1}

Smooth complete intersection in the complex projective space whose Hodge structures have level one were fully classified by Deligne and Rapoport in \cite{Del} and \cite{Rap}, respectively. These varieties are the following:
\begin{enumerate}
    \item Fano threefolds given by:
    \begin{itemize}
        \item Cubic hypersurfaces in $\mathbb{P}^4$,
        \item Quartic hypersurfaces in $\mathbb{P}^4$, and
        \item Complete intersections of a quadric and a cubic in $\mathbb{P}^5$.
    \end{itemize}
    
    \item Intersections of two quadrics in $\mathbb{P}^{2m+3}$.
    
    \item Intersections of three quadrics in $\mathbb{P}^{2m+4}$.
    
    \item Cubic fivefolds in $\mathbb{P}^6$.
\end{enumerate}

Recall that if $G$ is an abelian group and $X$ a complex algebraic variety, then $\mathcal{H}^q(G)$ denotes the Zariski sheaf associated to the presheaf $U \mapsto \operatorname{H}_B^q(U,G)$. The unramified cohomology groups are defined as
\[
\operatorname{H}_{\operatorname{nr}}^i(X,G) := \operatorname{H}_{\operatorname{Zar}}^0(X,\mathcal{H}^i(G)),
\]
which are birational invariants and relate to the torsion in Chow groups. More generally, if $\mathcal{G}$ is an étale sheaf on $X$, we set
\[
\operatorname{H}_{\operatorname{nr}}^i(X,\mathcal{G}) := \operatorname{H}^0_{\operatorname{Zar}}(X,\mathcal{H}^i(\mathcal{G})),
\]
where $\mathcal{H}^i(\mathcal{G})$ is the Zariski sheaf associated to $U \mapsto \operatorname{H}^i_{\et}(U,\mathcal{G})$.

\subsection{Chow groups of an intersection of two quadrics}\label{subsec:Chow 2 quadrics}

Let $X = X_{(2,2)} \subset \mathbb{P}^{2m+3}$. When $m \geq 1$, the variety $X$ is rational: by \cite[Theorem 2.1]{DM98} we know that $X$ contains a line $\ell \simeq \mathbb{P}^1 \subset X$, and more generally, $X$ contains $j$-planes $H\simeq \mathbb{P}^j$ for all $1 \leq j \leq m$. Here, the linear projection $\pi_\ell:X \dashrightarrow \mathbb{P}^{2m+1}$ from a line induces a birational map (cf. \cite{HT21a}).

\begin{theorem}\label{tors}
Let $X = X_{(2,2)} \subset \mathbb{P}^{2m+3}$ with $m \geq 1$. Fix an integer $p \neq m$. Then for all $k \geq 1$, the groups $\operatorname{H}^p(X, \mathcal{H}^{p+k}(\Z))$ are torsion.
\end{theorem}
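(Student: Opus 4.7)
The plan is to use the Bloch--Ogus (coniveau) spectral sequence
\[
E_2^{p,q} = H^p(X, \mathcal{H}^q(\Z)) \Rightarrow H^{p+q}(X, \Z),
\]
whose abutment is filtered by the coniveau filtration $N^\bullet H^{*}(X, \Z)$. By the Lefschetz hyperplane section theorem, the integral cohomology of $X$ is torsion-free, with $H^k(X, \Z) = \Z \cdot h^{k/2}$ for even $k \neq 2m+1$ and $H^k(X, \Z) = 0$ for odd $k \neq 2m+1$, while the middle cohomology $H^{2m+1}(X, \Q)$ carries a level-one rational Hodge structure whose intermediate Jacobian is the Jacobian $J(C)$ of a smooth hyperelliptic curve $C$ of genus $m+1$ (see \cite{reid}). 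The generalized Hodge conjecture is trivially satisfied for $X$: non-middle Hodge structures are generated by powers of $h$, which are algebraic of the required coniveau, and the level-one middle Hodge structure is realized algebraically via incidence cycles coming from $J(C)$.

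Consequently, the coniveau filtration on $H^{*}(X, \Q)$ is pure in the following sense: for $k \neq 2m+1$, the graded piece $\operatorname{gr}_N^j H^k(X, \Q)$ is nonzero only when $j = k/2$ (so only for $k$ even); for $k = 2m+1$, it is nonzero only when $j = m$. Under the generalized Hodge conjecture, the Bloch--Ogus spectral sequence degenerates at $E_2$ rationally, which yields the identification
\[
H^p(X, \mathcal{H}^{p+k}(\Q)) \simeq \operatorname{gr}_N^p H^{2p+k}(X, \Q).
\]
I would then verify, for $p \neq m$ and $k \geq 1$, that the right-hand side vanishes by going through three cases: if $2p+k$ is odd with $2p+k \neq 2m+1$ then $H^{2p+k}(X, \Q) = 0$; if $2p+k$ is even (so $k \geq 2$) then $H^{2p+k}(X, \Q)$ is generated by $h^{p+k/2}$ of coniveau $p + k/2 > p$; if $2p+k = 2m+1$ then the unique nontrivial coniveau is $m \neq p$. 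In all three cases $\operatorname{gr}_N^p H^{2p+k}(X, \Q) = 0$, so $H^p(X, \mathcal{H}^{p+k}(\Q)) = 0$, and since tensoring with $\Q$ is exact on abelian groups we conclude that $H^p(X, \mathcal{H}^{p+k}(\Z))$ is torsion.

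The main obstacle is justifying the rational degeneration of the Bloch--Ogus spectral sequence for $X$. For our $X$, this can be handled either via the explicit Chow--motive decomposition established by Reid \cite{reid}, expressing $h(X)_\Q$ as a direct sum of Tate motives and $h^1(C)(m)$, which decomposes both the coniveau filtration and the $E_2$ page and forces degeneration; or alternatively by invoking Theorem \ref{Teovoi} with $c = m$ (applicable since the rational Chow groups $\CH_k(X)_{\Q} \simeq \Q$ for $k < m$ inject into cohomology) to produce a decomposition
\[
N \cdot \Delta_X = \sum_{i=0}^{m-1} Z_i + Z' \in \CH^{2m+1}(X \times X)_\Q
\]
and to deduce the vanishing by analysing the correspondence action of $\Delta_X$ through the supports of the $Z_i$ and of $Z'$, combined with the Bloch--Ogus vanishing in the appropriate coniveau region.
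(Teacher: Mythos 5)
Your primary argument does not prove the statement, for the reason you yourself flag: the identification $\operatorname{H}^p(X,\mathcal{H}^{p+k}(\Q))\simeq \operatorname{gr}_N^p \operatorname{H}^{2p+k}(X,\Q)$ requires the coniveau spectral sequence to degenerate at $E_2$ rationally, and this is neither a formal consequence of the generalized Hodge conjecture nor known for these varieties in advance. Since $E_\infty^{p,q}$ is a subquotient of $E_2^{p,q}$, your (correct) computation that all graded pieces $\operatorname{gr}_N^p\operatorname{H}^{2p+k}(X,\Q)$ vanish for $p\neq m$, $k\geq 1$ gives no information about $E_2^{p,p+k}=\operatorname{H}^p(X,\mathcal{H}^{p+k}(\Q))$; the implication runs in the wrong direction. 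So everything up to ``the main obstacle'' is a computation of the wrong object.

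The paper's actual proof is your second fallback, executed carefully: apply Theorem \ref{Teovoi}, let $N\Delta_X^{*}=N\cdot\operatorname{id}$ act directly on the $E_2$-term $\operatorname{H}^p(X,\mathcal{H}^{p+k}(\Z))$, and show that each summand of the decomposition acts as zero because its action factors through cohomology of the desingularized supports, which vanishes for dimension reasons (either the sheaf $\mathcal{H}^{p+k}$ is zero on a support of dimension $<p+k$, or the Gysin shift lands the factorization in negative cohomological degree). This avoids any degeneration statement and yields torsionness of the integral group at once. The point you are missing is that the constant $c$ must be adapted to $p$: the paper takes $c=p+1$ for $p\leq m-1$ (so every $W_i$ has $\dim W_i=i\leq p<p+k$ and the $Z'$-term contributes in negative degree) and $c=2m+2-p$ for $p\geq m+1$. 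With your single choice $c=m$, for small $p$ the supports $W_i$ can have dimension up to $m-1>p+k$, and for large $p$ the set $T$ only satisfies $\dim T\leq m$, so the vanishings you would need are not the ones the decomposition hands you; one must then interleave the two possible factorizations term by term, which you have not done. The motivic-decomposition variant via \cite{reid} and \cite{lat} is a legitimate and arguably cleaner alternative, but it reduces to exactly the same computation of how projectors supported on products of proper subvarieties act on $\operatorname{H}^p(-,\mathcal{H}^q)$, so it does not let you skip that step.
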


\begin{proof}
The result holds for $p = 0$ and $p = 1$. Let us now fix $p \leq m - 1$. The hypotheses of Theorem~\ref{Teovoi} are satisfied for $X$ with $c = p + 1$, so we have a decomposition:
\[
N_p \Delta_X = Z_0 + Z_1 + \cdots + Z_p + Z',
\]
where $Z'$ is supported on $X \times T$ with $\operatorname{codim}_X(T) \geq p + 1$. Denote by $\widetilde{W}_i$ and $\widetilde{T}$ the desingularizations of $W_i$ and $T$, respectively.

Since the cycle $\Delta_X$ acts as the identity on cohomology, we have that $N_p \Delta_X^\ast$ acts on $\text{H}^p(X, \mathcal{H}^q(\Z))$ as $N_p \cdot \operatorname{Id}$. Thanks to the decomposition of the diagonal, the action of the diagonal (times the constant $N_p$) factors through the following terms:
\begin{itemize}
    \item $\text{H}^p(\widetilde{W}_i, \mathcal{H}^{p+k}_{\widetilde{W}_i}(\Z))$, which vanishes since $p+k > \dim(\widetilde{W}_i) = i$, so $\mathcal{H}^{p+k}_{\widetilde{W}_i}(\Z) = 0$.
    
    \item $\text{H}^{p-c}(\widetilde{T}, \mathcal{H}^{p+k - c}_{\widetilde{T}}(\Z))$, where $c = \operatorname{codim}_X(T)$. As long as $p - c < 0$ we have that the latter group vanishes.
\end{itemize}

Therefore, $N_p \cdot \Delta_X$ acts as zero on $\text{H}^p(X, \mathcal{H}^{p+k}(\Z))$, and the result follows.

Now suppose $p \geq m + 1$. Set $c_p := 2m + 1 - p$ so that the hypotheses are verified for $c = c_p + 1$. Then, the $N_{c_p}\Delta_X^\ast$ factors through:
\begin{itemize}
    \item $\text{H}^{p - d_i}(\widetilde{W}_i, \mathcal{H}^{q - d_i}_{\widetilde{W}_i}(\Z))$, where $d_i = 2m + 1 - i$ and again $p - d_i < 0$, so these terms vanish.
    
    \item $\text{H}^p(\widetilde{T}, \mathcal{H}^{p+k}_{\widetilde{T}}(\Z))$, and since $\operatorname{codim}_X(T) \geq c_p + 1$, we get $\dim(T) \leq p - 1$, so again the cohomology group vanishes.
\end{itemize}
As before, we deduce that the groups $\text{H}^p(X, \mathcal{H}^{p+k}(\Z))$ are torsion.
\end{proof}

\begin{corollary}\label{zero}
Let $X = X_{(2,2)} \subset \mathbb{P}^{2m+3}$ with $m \geq 1$. Then, for any fixed integer $p \neq m$ and for all $k \geq 1$, we have
\[
\operatorname{H}^p(X, \mathcal{H}^{p+k}(\Z)) = 0.
\]
\end{corollary}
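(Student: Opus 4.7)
By Theorem \ref{tors}, the group $\operatorname{H}^p(X, \mathcal{H}^{p+k}(\Z))$ is already known to be torsion, killed by the integer $N_p$ that appeared in the rational decomposition of Theorem \ref{Teovoi}. The plan is therefore to upgrade the statement of that decomposition from rational to integral coefficients for the variety $X = X_{(2,2)}$ with $m \geq 1$; if this can be done, then the factorisation argument of Theorem \ref{tors} applies verbatim with $N_p = 1$, forcing the identity to act as zero on the target group and thus showing it vanishes.

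The key geometric input is the rationality of $X$ for $m \geq 1$, established via the linear projection $\pi_\ell : X \dashrightarrow \mathbb{P}^{2m+1}$ from a line $\ell \subset X$ that was recalled in \S\ref{subsec:Chow 2 quadrics}. Resolving this birational map through a standard sequence of blow-ups and applying the blow-up formula for Chow groups, one should obtain an integral description of $\CH_\ast(X)$ in terms of $\CH_\ast(\mathbb{P}^{2m+1})$ and the Chow groups of the successive blow-up centres (themselves lower-dimensional loci whose Chow structure is understood either directly or by induction). This integral control is precisely what is needed to upgrade the identity of Theorem \ref{Teovoi} to an equality $\Delta_X = Z_0 + Z_1 + \cdots + Z_{c-1} + Z'$ in $\CH^{\dim X}(X \times X)$, with the same support conditions on the $Z_i$ and $Z'$ as in the rational statement.

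With this integral decomposition in hand, the argument of Theorem \ref{tors} proceeds without modification: the action of $\Delta_X^\ast$ on $\operatorname{H}^p(X, \mathcal{H}^{p+k}(\Z))$ factors through groups of the form $\operatorname{H}^{p-\alpha}(\widetilde{W}, \mathcal{H}^{p+k-\alpha}(\Z))$ that vanish for dimensional reasons, either because $p + k - \alpha > \dim \widetilde{W}$ (so that $\mathcal{H}^{p+k-\alpha} = 0$ on $\widetilde{W}$) or because $p - \alpha < 0$. Since $\Delta_X^\ast$ acts as the identity on cohomology, the identity itself is zero, and the required vanishing follows.

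The main obstacle is exactly the integral refinement of Theorem \ref{Teovoi}, whose original formulation depends on the rational injectivity of the cycle class maps and therefore allows the nonzero integer multiplicity $N$. In the present setting, this refinement should rest on combining three ingredients: the rationality of $X$ (which gives an integral trivialisation of $\CH_0(X)$ and a Chow-theoretic decomposition $\Delta_X = X \times x + Z$ with $Z$ supported on a proper divisor), the Lefschetz isomorphisms (ensuring that $\operatorname{H}^i(X, \Z)$ is torsion-free for $i \neq 2m+1$, so that no torsion obstruction appears in the higher-codimension pieces), and an explicit analysis of the resolution of $\pi_\ell$ identifying the supports $W_i$, $W_i'$, $T$ as integral cycles rather than merely rational ones.
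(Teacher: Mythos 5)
Your overall strategy coincides with the paper's: both arguments reduce the corollary to producing an \emph{integral} decomposition of the diagonal with $N_p=1$, so that the factorisation in the proof of Theorem~\ref{tors} kills the identity acting on $\operatorname{H}^p(X,\mathcal{H}^{p+k}(\Z))$ rather than merely a nonzero multiple of it. The gap lies in how you propose to produce that integral decomposition. You defer everything to an ``integral refinement of Theorem~\ref{Teovoi}'', to be obtained from integral control of $\CH_\ast(X)$ via blow-up formulas for a resolution of $\pi_\ell$, and you explicitly leave this as the ``main obstacle''. That obstacle is real and is not resolved by the ingredients you list: the proof of Theorem~\ref{Teovoi} introduces denominators through spreading-out over generic points, so one would need integral injectivity of the relevant cycle class maps (or integral triviality of the relevant Chow groups) over function-field extensions such as $\C(X)$, not just over $\C$; moreover, a common resolution $\widetilde{X}$ of the birational map computes Chow groups from either side only once the Chow groups of the blow-up centres \emph{inside $X$} are known, which is circular at this stage. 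As written, the proposal does not close.

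The paper's proof circumvents the refinement entirely. Writing $\phi=\pi_\ell:X\dashrightarrow\mathbb{P}^{2m+1}$ for the projection from a line, the standard identity for graphs of birational maps (cf.\ \cite[Lemme 3.5]{CTV}, invoked again later in the paper) yields
\[
\Delta_X=\phi^*(\Delta_{\mathbb{P}^{2m+1}})+Z\in\CH^{2m+1}(X\times X),
\]
with $Z$ supported on $H\times H$, where $H$ is the locus over which $\phi$ fails to be an isomorphism. Since $\Delta_{\mathbb{P}^{2m+1}}=\sum_i h^i\times h^{2m+1-i}$ decomposes \emph{integrally} into products of linear subspaces, the pulled-back decomposition is already integral, of the shape required by the factorisation argument, and has $N_p=1$; the vanishing then follows directly from Theorem~\ref{tors}. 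You should replace the appeal to an integral version of Theorem~\ref{Teovoi} by this direct transport of the decomposition of $\Delta_{\mathbb{P}^{2m+1}}$.
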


\begin{proof}
Let $\phi: X \dashrightarrow \mathbb{P}^{2m+1}$ be a birational map which is an isomorphism onto its image outside a line $\ell \subset X$, where $\operatorname{codim}_X(\ell) = 2m$ (i.e., $\dim(\ell) = 1$). Then:
\[
\Delta_X = \phi^*(\Delta_{\mathbb{P}^{2m+1}}) + Z \in \CH^{2m+1}(X \times X),
\]
where $Z$ is a cycle supported on $H\times H$, since $\phi$ is an isomorphism outside a linear subspace $H\subset X$ of $\operatorname{codim}_X(H)=m+1$. Indeed, the decomposition of the diagonal of $\Delta_{\mathbb{P}^{2m+1}}$ gives $N_p = 1$. Finally, the result follows directly from Theorem~\ref{tors}.
\end{proof}

\begin{corollary}
    Let $X=X_{(2,2)}\subset \mathbb{P}^{2m+3}$ with $m\geq 1$. Then
    \begin{align*}
        \CH^i(X)\simeq \left\{ \begin{array}{cl}
           \mathbb{Z}  & \textup{if }i\neq m+1, \\
            \mathbb{Z}\oplus J^{2m+1}(X) & \textup{if }i=m+1. 
        \end{array} \right.
    \end{align*}
\end{corollary}
\begin{proof}
    Let $n\in\N^{\geq 2}$. From the short exact sequence
    \begin{align*}
        0\to \operatorname{H}^{i-1}(X, \mathcal{H}^{i}(\Z))/n\to \operatorname{H}^{i-1}(X, \mathcal{H}^{i}(\Z/n)) \to \operatorname{H}^i(X, \mathcal{H}^{i}(\Z))[n] \to 0
    \end{align*}
and the fact that $\CH^i(X)/\textup{alg}\simeq  \operatorname{H}^i(X, \mathcal{H}^{i}(\Z))$ are torsion free, thus we have an isomorphism 
\begin{align*}
    \operatorname{H}^{i-1}(X, \mathcal{H}^{i}(\Z))/n\xrightarrow{\simeq}\operatorname{H}^{i-1}(X, \mathcal{H}^{i}(\Z/n)).
\end{align*}
By \cite[Proposition 1]{CSS}, there exists a surjection of abelian groups
\begin{align*}
    \operatorname{H}^{i-1}(X, \mathcal{H}^{i}(\Z/n)) \twoheadrightarrow \CH^i(X)[n]
\end{align*}
By Corollary \ref{zero}, for all $p\neq m$ and $k\geq 1$ the group $\operatorname{H}^p(X, \mathcal{H}^{p+k}(\Z)) = 0$, therefore for all $i\neq m+1$ one gets $\CH^i(X)[n]=0$ and consequently they are torsion free. The validity of the integral Hodge conjecture and the isomorphisms (see e.g. \cite[Theorem 1.2]{BT16})
    \begin{align*}
        \CH^i(X)_\Q\simeq \left\{ \begin{array}{cl}
           \mathbb{Q}  & \textup{if }i\neq m+1, \\
            \mathbb{Q}\oplus J^{2m+1}(X) & \textup{if }i=m+1. 
        \end{array} \right.
    \end{align*}
imply the result.
\end{proof}

\subsection{Chow groups of an intersection of three quadrics}\label{subsec:Chow 3 quadrics}

If we consider $X_{(2,2,2)} \subset \mathbb{P}^{2m+4}$, then for $m \geq 2$, the variety $X$ is rational (cf. \cite{HT21}). Moreover, invoking \cite[Theorem 2.1]{DM98}, we can observe that it contains a linear space $H \simeq \mathbb{P}^i \hookrightarrow X$, with $i = m$ if $m \leq 2$ and $i = m - 1$ if $m \geq 3$.

\subsubsection{Case when \texorpdfstring{$m = 2$}{}}

Let $X = X_{(2,2,2)} \subset \mathbb{P}^8$ be a smooth complete intersection of dimension 5. It contains a plane $H \simeq \mathbb{P}^2$ and admits a birational map $X \dashrightarrow \mathbb{P}^5$. We have the following information about its Chow groups:
\begin{itemize}
    \item $\CH^0(X) \simeq \Z$.
    \item $\CH^5(X) \simeq \Z$ since $X$ is rational. In particular, this implies that $X$ is $\CH_0$-universally trivial.
    \item $\CH^1(X) \simeq \Z[h]$, generated by a hyperplane section $h = c_1(\mathcal{O}_X(1))$.
    \item $\CH^2(X) \simeq \Z[h^2]$. Indeed, the integral Hodge conjecture holds for $X$ since it is $\CH_0$-universally trivial (see \cite[Theorem 1]{BS83}). Moreover, $\CH^2(X)_{\text{hom}} \simeq \CH^2(X)_{\text{alg}}$ and $\CH^2(X)_{\text{alg}} = 0$, hence the conclusion follows.
\end{itemize}

Regarding the group $\CH^4(X)$ of 1-cycles, we note: by \cite[Theorem 2]{BS83}, $\operatorname{Griff}^4(X)$ is torsion and annihilated by the integer $N$ in the decomposition of the diagonal. Since $X$ is $\CH_0$-universally trivial, $N = 1$, so $\CH^4(X)_{\text{alg}} = \CH^4(X)_{\text{hom}}$. Moreover, $\CH^4(X)_{\text{hom}}$ is torsion. We have a diagram
\[
\begin{tikzcd}[column sep=small]
0 \arrow{r} & \CH^4(X)_{\text{tors}} \arrow{r}\arrow{d} & \CH^4(X) \arrow{r}\arrow{d} & \CH^4(X)_\Q \arrow{r}\arrow{d} & \CH^4(X)_{\Q/\Z} \arrow{d}{\simeq} \arrow{r} & 0 \\
0 \arrow{r} & 0 \arrow{r} & \text{H}^{8}_B(X,\Z) \arrow{r} &  \text{H}^{8}_B(X,\Q) \arrow{r} &  \text{H}^{8}_B(X,\Q/\Z ) \arrow{r} & 0
\end{tikzcd}
\]
since $\CH^4(X)_{\Q/\Z} \simeq A^4(X)_{\Q/\Z} \simeq \Q/\Z$ and $\CH^4(X)_{\text{alg}} = \CH^4(X)_{\text{hom}}$, where $A^i(X) = \CH^i(X)/\CH^i(X)_{\text{alg}}$. The upper exact sequence implies, by analyzing the rank of $\CH^4(X)_\Q$, that $\CH^4(X) \simeq \Z \oplus \CH^4(X)_{\text{tors}}$ and then $\CH^4(X)_{\text{tors}} \simeq \CH^4(X)_{\text{hom}}$. Now, by \cite[Proposition 1]{CSS}, the map
\[
\text{H}^3_{\text{Zar}}(X,\mathcal{H}^4(\mu_{\ell^r}^{\otimes 4})) \to \CH^4(X)[\ell^r]
\]
is surjective. Using the Bloch-Ogus spectral sequence \cite{BO74},
\[
E^{p,q}_2 = \text{H}^p_{\text{Zar}}(X,\mathcal{H}^q(\mu_{\ell^r}^{\otimes 4})) \Longrightarrow \text{H}^{p+q}_\et(X,\mu_{\ell^r}^{\otimes 4}),
\]
and since $\text{H}^7_\et(X,\mu_{\ell^r}^{\otimes 4}) = 0$, all the graded pieces associated to $E^7_\infty$ vanish. We also have that
\[
E_\infty^{3,4} \simeq E_3^{3,4} \simeq \frac{E_2^{3,4}}{\operatorname{Im}(E_2^{1,5} \to E_2^{3,4})}.
\]
Moreover, we have a short exact sequence:
\[
0 \to \text{H}^5_{\text{nr}}(X,\Z(4))/\ell^r \to \text{H}^5_{\text{nr}}(X,\mu_{\ell^r}^{\otimes 4}) \to \text{H}^1_{\text{Zar}}(X,\mathcal{H}^5(\Z(4)))[\ell^r] \to 0.
\]
Due to the rationality of $X$, we have that $\text{H}^1_{\text{Zar}}(X,\mathcal{H}^5(\Z(4)))_{\text{tors}} = 0$, so 
\[
\text{H}^1_{\text{Zar}}(X,\mathcal{H}^5(\Z(4)))/\ell^r \simeq \text{H}^1_{\text{Zar}}(X,\mathcal{H}^5(\mu_{\ell^r}^{\otimes 4})).
\]
Using the same arguments as in \cite[Lemme 21]{FT}, one can deduce directly that $\text{H}^1_{\text{Zar}}(X,\mathcal{H}^5(\Z(4))) = 0$, and hence $E_2^{1,5} = 0$. It follows that $E_\infty^{3,4} = E_2^{3,4}$, and thus
\[
E_\infty^{3,4} = \text{H}^3_{\text{Zar}}(X, \mathcal{H}^4(\mu_{\ell^r}^{\otimes 4})),
\]
which implies $\CH^4(X)[\ell^r] = 0$.

\begin{prop}
Let $X = X_{(2,2,2)} \subset \mathbb{P}^8$ be a smooth complete intersection. Then:
\[
\operatorname{N}^2\operatorname{H}^5(X,\Z) := \sum_{\operatorname{codim}_X(X \setminus U) \geq 2} \ker \left\{ \operatorname{H}^5(X,\Z) \xrightarrow{j^*} \operatorname{H}^5(U,\Z) \right\} \simeq \operatorname{H}^5(X,\Z).
\]
\end{prop}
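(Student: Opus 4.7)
My plan is to apply the refined decomposition of the diagonal given in Theorem~\ref{Teovoi} with $c = 2$, combined with the Chow-group computations just carried out. First I would verify the hypotheses: for $c = 2$ these amount to the injectivity of the cycle class maps $c^5_\Q \colon \CH_0(X)_\Q \to \operatorname{H}^{10}(X,\Q)$ and $c^4_\Q \colon \CH_1(X)_\Q \to \operatorname{H}^8(X,\Q)$. Both hold trivially: $X$ is $\CH_0$-universally trivial so $\CH_0(X) \simeq \Z$, and the preceding paragraph has established $\CH^4(X) \simeq \Z$, so both maps are simply the identification $\Q \xrightarrow{\sim} \Q$. Theorem~\ref{Teovoi} then produces $N \in \N^{\geq 1}$ and a decomposition
\[
N \cdot \Delta_X = Z_0 + Z_1 + Z' \in \CH^5(X \times X)_\Q,
\]
with $Z_i$ supported on $W_i \times W_i'$ of dimensions $(i, 5-i)$ and $Z'$ supported on $X \times T$ with $\operatorname{codim}_X T \geq 3$.

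Next I would let this identity act as correspondences on $\operatorname{H}^5(X,\Q)$. A K\"unneth calculation kills $Z_0^*$ and $Z_1^*$ on $\operatorname{H}^5(X,\Q)$ for degree reasons: the class of $Z_0$ lives in $\operatorname{H}^{10}(X) \otimes \operatorname{H}^0(X)$ and forces $Z_0^*(\alpha)$ to involve $\alpha \cup [\mathrm{pt}] \in \operatorname{H}^{15}(X) = 0$, while $[Z_1]$ lives in $\operatorname{H}^8(X) \otimes \operatorname{H}^2(X)$ and yields $\alpha \cup [W_1] \in \operatorname{H}^{13}(X) = 0$. Therefore $N \cdot \operatorname{Id}_{\operatorname{H}^5(X,\Q)} = Z'^*$, and since $Z'^*$ factors through classes supported on the codimension-$\geq 3$ closed subset $T \subset X$, its image lies in $\operatorname{N}^3 \operatorname{H}^5(X,\Q) \subset \operatorname{N}^2 \operatorname{H}^5(X,\Q)$. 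Being a $\Q$-vector space, this yields $\operatorname{N}^2 \operatorname{H}^5(X,\Q) = \operatorname{H}^5(X,\Q)$.

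To conclude with integral coefficients, I would use that $\operatorname{H}^5(X,\Z)$ is torsion-free (standard for a smooth complete intersection in projective space, via the Lefschetz hyperplane theorem combined with Poincar\'e duality and universal coefficients). Since the functor $-\otimes \Q$ is exact and commutes with colimits, $\operatorname{N}^2 \operatorname{H}^5(X,\Z) \otimes \Q = \operatorname{N}^2 \operatorname{H}^5(X,\Q) = \operatorname{H}^5(X,\Q)$, so $\operatorname{N}^2 \operatorname{H}^5(X,\Z)$ is a full-rank sublattice of the free abelian group $\operatorname{H}^5(X,\Z)$, and both are abstractly isomorphic as required. The main difficulty I foresee is upgrading this to a genuine equality of subgroups — should the proposition be read in that strict sense — which amounts to killing the finite torsion in the quotient $\operatorname{H}^5(X,\Z) / \operatorname{N}^2 \operatorname{H}^5(X,\Z)$; a natural way is to exhibit an explicit codimension-$2$ closed subvariety $Z \subset X$, for instance coming from the $\tau$-exceptional locus of a resolution of the birational projection $X \dashrightarrow \mathbb{P}^5$ from a plane $\mathbb{P}^2 \subset X$, through which every integral class of $\operatorname{H}^5(X,\Z)$ factors via a Gysin map.
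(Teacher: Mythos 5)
Your plan diverges from the paper's proof, and as written it has two genuine problems. First, the application of Theorem~\ref{Teovoi} does not close up degree-wise. If $Z'$ is supported on $X\times T$ with $\operatorname{codim}_X(T)\geq 3$, then $Z'$ acts as zero on $\operatorname{H}^5(X,\Q)$ no matter which of the two correspondence actions you use: the ${}^*$-action restricts a degree-$5$ class to $\widetilde{T}$, which has dimension $\leq 2$, so it dies; the ${}_*$-action factors through the Gysin map $\operatorname{H}^{5-2\operatorname{codim}_X(T)}(\widetilde{T},\Q)\to \operatorname{H}^5(X,\Q)$, whose source is $\operatorname{H}^{-1}=0$. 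Equivalently, $\operatorname{N}^3\operatorname{H}^5(X,\Q)=0$ for \emph{any} smooth projective variety, so the conclusion ``$N\cdot\operatorname{Id}$ has image in $\operatorname{N}^3\operatorname{H}^5$'' would force $\operatorname{H}^5(X,\Q)=0$, which is false here ($b_5(X)>0$). The correct bound in the generalized decomposition is $\operatorname{codim}_X(T)\geq c=2$ (this is also how Theorem~\ref{Teovoi} is actually invoked in the proof of Theorem~\ref{tors}), and one must use the action whose image is supported on $T$, i.e.\ pushforward from $\widetilde{T}$, which then lands in $\operatorname{N}^2\operatorname{H}^5(X,\Q)$ via $\operatorname{H}^1(\widetilde{T},\Q)$. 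Also note that the image of $Z'^*$ for $Z'$ supported on $X\times T$ is supported on $\operatorname{pr}_1(X\times T)=X$, not on $T$; it is the transposed action whose image is supported on $T$. With these corrections your Künneth/degree argument does give $\operatorname{N}^2\operatorname{H}^5(X,\Q)=\operatorname{H}^5(X,\Q)$.

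Second, and more importantly, the integral statement is the point of the proposition and your argument does not reach it. Knowing that $\operatorname{N}^2\operatorname{H}^5(X,\Z)$ is a full-rank sublattice of the free group $\operatorname{H}^5(X,\Z)$ makes the displayed ``$\simeq$'' vacuous (any two free abelian groups of equal rank are abstractly isomorphic), whereas the proposition is used downstream as the genuine coniveau equality $\operatorname{N}^2\operatorname{H}^5(X,\Z)=\operatorname{H}^5(X,\Z)$ (to kill $\operatorname{Griff}^3(X)$ à la Fu--Tian). You correctly identify this as the remaining difficulty but do not close it. The paper closes it directly and avoids the diagonal altogether: the plane $\mathbb{P}^2\subset X$ induces a birational map $\phi:X\dashrightarrow\mathbb{P}^5$ defined on an open $U$ with $\operatorname{codim}_X(X\setminus U)\geq 2$; writing $\Gamma_{\phi^{-1}}\circ\Gamma_\phi=\Delta_X+\mathcal{Z}$ with $\mathcal{Z}$ supported on $(X\setminus U)\times(X\setminus U)$ and comparing the integral localization sequences for $(X,X\setminus U)$ and $(\mathbb{P}^5,\mathbb{P}^5\setminus V)$, the restriction $j_U^*$ factors through $\operatorname{H}^5(\mathbb{P}^5,\Z)=0$, so the single open set $U$ already kills all of $\operatorname{H}^5(X,\Z)$. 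You should either adopt that argument or carry out the Gysin-surjectivity step you sketch at the end; without it the proof is incomplete.
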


\begin{proof}
The variety $X$ contains a plane $H \simeq \mathbb{P}^2$ by \cite[Theorem 2.1]{DM98}, inducing a birational map $\phi : X \dashrightarrow \mathbb{P}^5$ (cf. \cite{HT21}). Let $\Gamma_\phi \subset X \times \mathbb{P}^5$ and $\Gamma_{\phi^{-1}} \subset \mathbb{P}^5 \times X$ be the closures of the graphs of $\phi$ and $\phi^{-1}$.

Since we are considering smooth projective varieties, $\phi$ is defined on an open $U \subset X$ with $\operatorname{codim}_X(X \setminus U) \geq 2$. Similarly, $\phi^{-1}$ is defined on $V \subset \mathbb{P}^5$ with $\operatorname{codim}_{\mathbb{P}^5}(\mathbb{P}^5 \setminus V) \geq 2$.

Let $Z = X \setminus U$ and $W = \mathbb{P}^5 \setminus V$. Then (see \cite[Lemme 3.5]{CTV}),
\[
\Gamma_{\phi^{-1}} \circ \Gamma_\phi = \Delta_X + \mathcal{Z}, \quad \Gamma_\phi \circ \Gamma_{\phi^{-1}} = \Delta_{\mathbb{P}^5} + \mathcal{W},
\]
with $\mathcal{Z}, \mathcal{W}$ supported in $Z \times Z$ and $W \times W$, respectively. We have the diagram:
\[
\begin{tikzcd}[column sep=small]
\ldots \arrow{r} & \text{H}^5_Z(X,\Z) \arrow{r}\arrow{d} & \text{H}^5(X,\Z) \arrow{r}{j^*_U}\arrow{d}{(\Gamma_{\phi})_* } & \text{H}^5(U,\Z) \arrow{r}\arrow{d}{\simeq}\arrow[dd,bend left=30,equal ] & \text{H}^6_Z(X,\Z)\arrow{d}{} \arrow{r} &\ldots \\
\ldots \arrow{r} & \text{H}^5_{W}(\mathbb{P}^5,\Z) \arrow{r} \arrow{d}{}&  \text{H}^5(\mathbb{P}^5,\Z) \arrow{r}{j^*_V} \arrow{d}{(\Gamma_{\phi^{-1}})_* } &  \text{H}^{5}(V,\Z) \arrow{r}\arrow{d}{\simeq} &  \text{H}^6_{W}(\mathbb{P}^5,\Z) \arrow{r}\arrow{d}{} & \ldots \\
\ldots \arrow{r} & \text{H}^5_Z(X,\Z) \arrow{r} & \text{H}^5(X,\Z)\arrow{r}{j^*_U}  & \text{H}^5(U,\Z) \arrow{r}& \text{H}^6_Z(X,\Z) \arrow{r} &\ldots \\
\end{tikzcd}
\vspace{-5mm}
\]
Since $\text{H}^5(\mathbb{P}^5,\Z) = 0$ then $\ker(j_U^*)=\text{H}^5(X,\Z)$, and hence $\operatorname{N}^2\text{H}^5(X,\Z) \simeq \text{H}^5(X,\Z)$.
\end{proof}

Thanks to the fact that $\text{H}^5(X,\Z)$ is of coniveau $2$, we conclude as in \cite[Corollaire 14 and Proposition 16]{FT} that $\operatorname{Griff}^3(X) = 0$. Thus, $\CH^3(X)_{\text{hom}} \simeq \CH^3(X)_{\text{alg}}$, and by \cite[Théorème 6.3]{Beau}, $\CH^3(X)_{\text{alg}} \simeq J^5(X)$, which is a principally polarized abelian variety. Since $X$ contains a plane $H \simeq \mathbb{P}^2 \hookrightarrow X$, the map $\CH^3(X) \to \text{H}^6(X,\Z)$ is surjective, and the resulting section gives a split exact sequence:
\[
0 \to J^5(X) \to \CH^3(X) \to \Z \to 0.
\]

\subsubsection{Chow groups of \texorpdfstring{$X_{(2,2,2)} \subset \mathbb{P}^{2m+4}$}{} when \texorpdfstring{$m \geq 3$}{}}

Now consider $X = X_{(2,2,2)} \subset \mathbb{P}^{2m+4}$ with $m > 2$. For $m \geq 2$, the variety $X$ is rational, with a birational map $\phi : X \dashrightarrow \mathbb{P}^{2m+1}$ induced by projection from a linear space $H$. Therefore, for all $i > 0$, the unramified cohomology groups $\text{H}^i_{\text{nr}}(X, \Q/\Z)$ vanish.

In general, for $i \neq m+1$, we have an isomorphism
\[
\CH^i(X) \simeq d_i \cdot \Z \oplus \CH^i(X)_{\text{tors}},
\]
with $d_i \in \N^{\geq 1}$ and $\CH^i(X)_{\text{hom}} \simeq \CH^i(X)_{\text{tors}}$. Since $X$ contains a hyperplane $H \subset X$ of dimension $m-1$, we have $d_i = 1$ for $m+2 \leq i \leq 2m+1$.

For $i = m+1$, we have a short exact sequence:
\[
0 \to \CH^{m+1}(X)_{\text{hom}} \to \CH^{m+1}(X) \to d_{m+1} \cdot \Z,
\]
which splits if and only if $d_{m+1} = 1$.

Note that $\CH^{2m}(X)$ is torsion-free: by \cite[Theorem 1]{BS83}, $\CH^{2m}(X)_{\text{hom}} = \CH^{2m}(X)_{\text{alg}} = 0$, hence $\CH^{2m}(X) \simeq \Z \cdot l$ where $l \simeq \mathbb{P}^1 \subset X$ is a line.

Following the proof of \cite[Corollaire 12]{FT}, one can observe that the diagonal of $X$ decomposes as
\[
\Delta_X = x \times X + l \times h + Z',
\]
where $x \in X$ is a point, $h = c_1(\mathcal{O}_X(1))$ is the hyperplane class, and $Z'$ is supported on $X \times T$ with $\operatorname{codim}_X(T) \geq 2$. Here, the constant $m$ in \cite[Corollaire 12]{FT} equals 1 due to the rationality of $X$ (see \cite[Proposition 6.1]{Par}).

\begin{corollary}\label{zero1}
Let $X = X_{(2,2,2)} \subset \mathbb{P}^{2m+4}$ be a smooth complete intersection with $m \geq 3$. Then for any integer $p \neq m$ and all $k \geq 1$, we have
\[
\operatorname{H}^p(X, \mathcal{H}^{p+k}(\Z)) = 0.
\]
\end{corollary}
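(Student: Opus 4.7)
The plan is to reproduce, with minor adjustments, the two-step argument used for Corollary~\ref{zero}. Step one establishes an analog of Theorem~\ref{tors} for $X=X_{(2,2,2)}\subset\mathbb{P}^{2m+4}$, namely that $\operatorname{H}^p(X,\mathcal{H}^{p+k}(\Z))$ is \emph{torsion} for $p\neq m$ and $k\geq 1$. Step two then upgrades torsion to vanishing by exploiting the fact that $X$ is rational with a birational map to $\mathbb{P}^{2m+1}$ whose indeterminacy locus has codimension at least $2$.

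For step one, I would verify the hypotheses of Theorem~\ref{Teovoi} for $X$: the rational cycle class map $\CH_k(X)_\Q\hookrightarrow\operatorname{H}^{2(n-k)}(X,\Q)$ is injective for $k<c$ in the relevant ranges, because away from the middle, the rational Chow groups of a smooth complete intersection with level-one Hodge structure are spanned by powers of the hyperplane class and map injectively into Betti cohomology. Applying Theorem~\ref{Teovoi} with $c=p+1$ when $p\leq m-1$ and (by the symmetric argument) with $c=2m-p$ when $p\geq m+1$ yields a decomposition
\[
N\Delta_X=Z_0+Z_1+\cdots+Z_{c-1}+Z'\in\CH^{2m+1}(X\times X)_\Q,
\]
with $Z_i$ supported on $W_i\times W_i'$, $\dim W_i=i$, and $Z'$ on $X\times T$ with $\operatorname{codim}_X(T)\geq c+1$. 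Acting on $\operatorname{H}^p(X,\mathcal{H}^{p+k}(\Z))$, each summand factors either through $\operatorname{H}^p(\widetilde{W}_i,\mathcal{H}^{p+k}_{\widetilde{W}_i}(\Z))$, which vanishes because $p+k>i=\dim\widetilde{W}_i$, or through $\operatorname{H}^{p-c}(\widetilde{T},\mathcal{H}^{p+k-c}_{\widetilde{T}}(\Z))$ with $p-c<0$. Hence $N$ annihilates the group, exactly as in Theorem~\ref{tors}.

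For step two, I would invoke the rationality of $X$ through the projection from the linear subspace $H\simeq\mathbb{P}^{m-1}\subset X$ recalled in the paragraph preceding the statement. This produces a birational map $\phi:X\dashrightarrow\mathbb{P}^{2m+1}$ defined outside a closed subset of codimension at least $2$ in $X$, and the corresponding composition of graphs identity gives an \emph{integral} decomposition of the diagonal — concretely the one already recorded in the section,
\[
\Delta_X=x\times X+l\times h+Z'\in\CH^{2m+1}(X\times X),
\]
where $x\in X$ is a point, $l\subset X$ a line, $h=c_1(\mathcal{O}_X(1))$, and $Z'$ is supported on $X\times T$ with $\operatorname{codim}_X(T)\geq 2$. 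Because the diagonal of $\mathbb{P}^{2m+1}$ admits a trivial Chow decomposition, this holds with constant $N=1$. Running the same correspondence argument as in step one, but now with $N=1$, shows that $\Delta_X^\ast$ acts as zero on $\operatorname{H}^p(X,\mathcal{H}^{p+k}(\Z))$ for every $p\neq m$ and $k\geq 1$, yielding the desired vanishing.

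The principal delicate point, as in Corollary~\ref{zero}, is organizing the support of the pieces of the diagonal decomposition so that the correspondence action on $\operatorname{H}^p(X,\mathcal{H}^{p+k}(\Z))$ is literally zero, rather than merely torsion: for $p\leq m-1$ the terms $x\times X$ and $l\times h$ factor through low-dimensional loci whose sheaf cohomology vanishes by the Bloch--Ogus--Gersten resolution, while for $p\geq m+1$ the symmetric argument applies. Verifying these dimensional inequalities in both the low and high ranges constitutes the only bookkeeping step beyond the template already used for $X_{(2,2)}$.
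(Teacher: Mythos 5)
Your overall two-step strategy --- torsion via Theorem~\ref{Teovoi} as in Theorem~\ref{tors}, then an upgrade to vanishing using an integral decomposition supplied by rationality --- is exactly the paper's, and your step one is fine. The gap is in step two: the integral decomposition you invoke,
\[
\Delta_X = x\times X + l\times h + Z', \qquad Z' \text{ supported on } X\times T,\ \operatorname{codim}_X(T)\geq 2,
\]
is too coarse to ``run the argument of step one with $N=1$''. The argument of Theorem~\ref{tors} for a given $p\leq m-1$ requires explicit pieces supported on $W_i\times W_i'$ with $\dim W_i=i$ for \emph{every} $0\leq i\leq p$, together with a remainder supported on $X\times T$ with $\operatorname{codim}_X(T)\geq p+2$; the decomposition above provides this only for $p\leq 1$. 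For $2\leq p\leq m-1$ the term $Z'$ acts on $\operatorname{H}^p(X,\mathcal{H}^{p+k}(\Z))$ through $\operatorname{H}^{p-c}(\widetilde{T},\mathcal{H}^{p+k-c}_{\widetilde{T}}(\Z))$ with $c=\operatorname{codim}_X(T)$ possibly equal to $2$, and neither $p-c<0$ nor $p+k-c>\dim\widetilde{T}$ need hold, so nothing forces this contribution to vanish; the upper range $m+1\leq p\leq 2m$ has the same defect. Your closing paragraph checks only the terms $x\times X$ and $l\times h$ and never addresses $Z'$, which is precisely where the argument breaks.

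The repair is the decomposition the paper actually uses. Since $\phi:X\dashrightarrow\mathbb{P}^{2m+1}$ is an isomorphism off the linear space $H\simeq\mathbb{P}^{m-1}$ with $\operatorname{codim}_X(H)=m+2$, one has the \emph{integral} identity $\Delta_X=\phi^*(\Delta_{\mathbb{P}^{2m+1}})+Z$ with $Z$ supported on $H\times H$. Pulling back the full linear decomposition $\Delta_{\mathbb{P}^{2m+1}}=\sum_{i=0}^{2m+1}h^i\times h^{2m+1-i}$ produces integral pieces supported on $V_i\times V_i'$ with $\dim V_i=i$ for \emph{all} $i$, i.e.\ the $N=1$ analogue of Theorem~\ref{Teovoi} at every level simultaneously, and these are disposed of exactly as in Theorem~\ref{tors}. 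The error term $Z$ is harmless because \emph{both} of its supports are small: its action on $\operatorname{H}^p(X,\mathcal{H}^{p+k}(\Z))$ dies either because $p+k>\dim H=m-1$ (which holds for $p\geq m-1$, since $k\geq 1$) or because $p<\operatorname{codim}_X(H)=m+2$ (which holds for $p\leq m+1$), and these two ranges cover every $p$. This is the same mechanism as in Corollary~\ref{zero}; with that substitution your plan goes through.
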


\begin{proof}
The same argument used to prove Theorem~\ref{tors} shows that these groups are torsion.

Let $\phi : X \dashrightarrow \mathbb{P}^{2m+1}$ be the birational map, which is an isomorphism outside a linear subspace $H \subset X$ with $\operatorname{codim}_X(H) = m+2$. Then:
\[
\Delta_X = \phi^*(\Delta_{\mathbb{P}^{2m+1}}) + Z \in \CH^{2m+1}(X \times X),
\]
where $Z$ is supported on $H \times H$. Since $N_p = 1$ in the decomposition of the diagonal, the result follows.
\end{proof}

\begin{remark}
Since $X_{(2,2,2)} \subset \mathbb{P}^{2m+4}$ is rational, by \cite[Proposition 3.4]{CTV} we have for any commutative group $A$ and all $i \geq 0$ with $i \neq 2m+1$ that $\text{H}^i(X,\mathcal{H}^{2m+1}(A)) = 0$, and $\text{H}^{2m+1}(X,\mathcal{H}^{2m+1}(A)) \simeq A$.
\end{remark}

\begin{prop}
Let $X = X_{(2,2,2)} \subset \mathbb{P}^{2m+4}$ with $m \geq 3$. Then for all $0 \leq i \leq 2m+1$ with $i \neq m+1$, the groups $\CH^i(X)$ are torsion-free.
\end{prop}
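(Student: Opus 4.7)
The plan is to adapt the argument from the five-dimensional case $X_{(2,2,2)}\subset \mathbb{P}^8$ already treated in the text to arbitrary $m\geq 3$. Since the paper has already established that
\begin{equation*}
\CH^i(X)\simeq d_i\cdot \Z\oplus \CH^i(X)_{\tors}
\end{equation*}
for $i\neq m+1$, it suffices to prove $\CH^i(X)_{\tors}=0$, which, working one prime at a time, reduces to $\CH^i(X)[\ell^r]=0$ for every prime $\ell$ and every $r\geq 1$. I would first invoke \cite[Proposition 1]{CSS} to obtain a surjection
\begin{equation*}
\operatorname{H}^{i-1}_{\operatorname{Zar}}\!\left(X,\, \mathcal{H}^i(\mu_{\ell^r}^{\otimes i})\right) \twoheadrightarrow \CH^i(X)[\ell^r],
\end{equation*}
so that the problem is reduced to the vanishing of the left-hand side.

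The next step is to analyse the Bloch--Ogus spectral sequence
\begin{equation*}
E_2^{p,q}=\operatorname{H}^p_{\operatorname{Zar}}(X,\mathcal{H}^q(\mu_{\ell^r}^{\otimes i}))\Longrightarrow \operatorname{H}^{p+q}_\et(X,\mu_{\ell^r}^{\otimes i})
\end{equation*}
at bidegree $(p,q)=(i-1,i)$. The abutment $\operatorname{H}^{2i-1}_\et(X,\mu_{\ell^r}^{\otimes i})$ vanishes by the Lefschetz hyperplane theorem and Poincar\'e duality, since $2i-1\neq 2m+1=\dim X$ exactly when $i\neq m+1$. Hence $E_\infty^{i-1,i}=0$. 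The Bloch--Ogus concentration $E_2^{p,q}=0$ for $p>q$ kills every outgoing differential from $(i-1,i)$, so to conclude $E_2^{i-1,i}=0$ it is enough to show that each incoming source $E_2^{i-1-r,i+r-1}$, for $2\leq r\leq \min(i-1,2m-i+2)$, vanishes. This I would achieve by passing from $\mu_{\ell^r}^{\otimes i}$ to $\Z$ coefficients via the universal coefficient short exact sequence of Zariski sheaves
\begin{equation*}
0 \to \mathcal{H}^q(\Z)/\ell^r \to \mathcal{H}^q(\Z/\ell^r) \to \mathcal{H}^{q+1}(\Z)[\ell^r] \to 0,
\end{equation*}
and then invoking Corollary \ref{zero1}, which yields $\operatorname{H}^p(X,\mathcal{H}^{p+k}(\Z))=0$ for $p\neq m$ and $k\geq 1$, together with the torsion-freeness of the relevant sheaves $\mathcal{H}^q(\Z)$ derived from the rationality of $X$, in the spirit of \cite[Lemme 21]{FT}.

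The principal obstacle is that Corollary \ref{zero1} explicitly excludes $p=m$, whereas certain incoming differential sources $E_2^{i-1-r,i+r-1}$ fall exactly on $p=m$, namely when $i=m+r+1$. For these exceptional bidegrees the $\Z$-coefficient vanishing is not directly available, and one must instead exploit finer consequences of the rationality of $X$ (notably \cite[Proposition 3.4]{CTV} and the vanishing of unramified cohomology recorded above) combined with a careful analysis of the torsion subsheaves $\mathcal{H}^{q+1}(\Z)[\ell^r]$ appearing in the universal coefficient sequence. Handling these boundary cases uniformly across all admissible $i\neq m+1$ is the most delicate part of the argument.
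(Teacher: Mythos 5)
Your opening moves coincide with the paper's: the reduction to $\CH^i(X)[\ell^r]=0$ and the surjection $\operatorname{H}^{i-1}_{\operatorname{Zar}}(X,\mathcal{H}^i(\mu_{\ell^r}^{\otimes i}))\twoheadrightarrow \CH^i(X)[\ell^r]$ from \cite[Proposition 1]{CSS} are exactly the starting point of the paper's proof. But from there you take a detour that manufactures the very obstacle you then cannot remove. The paper does not argue via convergence of the Bloch--Ogus spectral sequence here at all: once the problem is reduced to the vanishing of the single group $\operatorname{H}^{i-1}_{\operatorname{Zar}}(X,\mathcal{H}^i(\mu_{\ell^r}^{\otimes i}))$, it applies Corollary~\ref{zero1} at $p=i-1$, $k=1$ (passing between $\Z$- and $\Z/\ell^r$-coefficients via the coefficient sequence, as in the $m=2$ discussion), and the hypothesis $i\neq m+1$ is precisely what guarantees $p=i-1\neq m$, so the excluded index of the corollary is never hit. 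There is no need to identify $E_2^{i-1,i}$ with $E_\infty^{i-1,i}$, hence no need to control incoming differentials. The spectral-sequence analysis you are imitating belongs to the $m=2$ computation of $\CH^4(X)$, where the relevant $E_2$-term is \emph{not} the one the CSS surjection already hands you; here it is.

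By contrast, your route requires $E_2^{i-1,i}=E_\infty^{i-1,i}$, which forces you to kill the sources $E_2^{i-1-r,\,i+r-1}$ of the incoming differentials; since the outgoing differentials already vanish, $E_\infty^{i-1,i}=0$ only tells you that $E_2^{i-1,i}$ is exhausted by the images of the incoming $d_r$, so you genuinely need those sources to vanish. As you observe, for $r=i-1-m$ the source sits at $p=m$, exactly where Corollary~\ref{zero1} fails, and this happens for \emph{every} $i$ with $m+3\leq i\leq 2m+1$, so these are not isolated exceptions. Your proposal only gestures at how to treat them (``one must instead exploit finer consequences of the rationality of $X$'') without supplying an argument, so as written this is a genuine gap. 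The repair is not to find the missing vanishing at $p=m$ but to drop the $E_\infty$ comparison altogether: the surjection of \cite[Proposition 1]{CSS} already targets the $E_2$-term whose vanishing you need, and that vanishing follows from Corollary~\ref{zero1} at the unproblematic index $p=i-1$.
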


\begin{proof}
The argument relies on the surjectivity of
\[
\text{H}^{i-1}_{\text{Zar}}(X,\mathcal{H}^i(\mu^{\otimes i}_{\ell^r})) \twoheadrightarrow \CH^i(X)[\ell^r].
\]
By Corollary~\ref{zero1}, for fixed $i \neq m+1$, we have $\text{H}^{i-1}_{\text{Zar}}(X,\mathcal{H}^i(\Z)) = 0$. Therefore, the group on the left vanishes, and hence $\CH^i(X)[\ell^r] = 0$ for all primes $\ell$ and all $r$, proving that $\CH^i(X)$ is torsion-free.
\end{proof}

\section{\'Etale Chow groups of complete intersection with Hodge level one}\label{section:Chow etale level 1}

In this section we compute the \'etale Chow groups for smooth complete intersections in the projective space with Hodge structure of level one. More precisely, we will study the comparison maps
\[
\kappa^i: \operatorname{CH}^i(X)\to \operatorname{CH}_\et^i(X)
\]
that were introduced in \S \ref{section:Chow etale}. 

We will first discuss the case of Fano threefolds (which includes, in particular, cubic threefolds, quartic threefolds, and complete intersections of type $(2,3)$ in $\mathbb{P}^5$), and then turn to the case of higher-dimensional complete intersections.

\subsection{Fano threefolds}

Let $X$ be a smooth Fano threefold over the field of complex numbers $\C$. Such varieties satisfy the following two properties: Grabowski proved in \cite{Gra} that the integral Hodge conjecture holds for $X$, and that its Chow groups are representable. Using these facts, we can readily deduce the following isomorphisms:

\begin{theorem}
Let $X$ be a smooth Fano threefold. Then:
\[
\kappa^i:\CH^i_\et(X) \xrightarrow{\simeq} \CH^i(X) \textup{ for all }i\geq 0.
\]
\end{theorem}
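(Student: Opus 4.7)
The plan is to verify the isomorphism $\kappa^i\colon \CH^i(X)\to \CH^i_\et(X)$ codimension by codimension, reducing the whole statement to the single non-trivial case $i=2$.

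For $i\in\{0,1\}$, the identifications recalled in \S\ref{section:Chow etale} give $\CH^0(X)=\Z=\CH^0_\et(X)$ by connectedness, and $\CH^1(X)=\operatorname{Pic}(X)=\CH^1_\et(X)$ through the quasi-isomorphism $\Z_X(1)_\et\simeq \mathbb{G}_m[-1]$. For $i\geq \dim X = 3$, I would invoke Suslin rigidity (Proposition \ref{lemGe}), which asserts that $\Z_X(i)\to R\pi_* \Z_X(i)_\et$ is a quasi-isomorphism in this range. This gives $\CH^3(X)\simeq \CH^3_\et(X)$ on the nose, while for $i>3$ both groups vanish by dimension.

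The main case is $i=2$. I would use Kahn's short exact sequence \cite[Proposition 2.9]{Kahn} already recalled in the excerpt,
\[
0\to \CH^2(X)\xrightarrow{\kappa^2}\CH^2_\et(X)\to \operatorname{H}_{\operatorname{nr}}^3(X,\Q/\Z(2))\to 0,
\]
so that the problem reduces to establishing the vanishing of $\operatorname{H}_{\operatorname{nr}}^3(X,\Q/\Z(2))$. This is where the Fano hypothesis enters decisively through Grabowski's results \cite{Gra}: (i) the integral Hodge conjecture holds in degree $4$ for $X$, hence $\operatorname{Im}(c^2)=\operatorname{Hdg}^4(X,\Z)$; and (ii) $\CH^2(X)_{\alg}$ is representable by an abelian variety. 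Since a smooth Fano threefold is simply connected, $\operatorname{H}^i(X,\Z)$ is torsion-free for $i\leq 4$; moreover Kodaira vanishing applied to $K_X=\Omega_X^3$ gives $H^{3,1}(X)=0$, so $\operatorname{H}^4(X,\C)$ is purely of type $(2,2)$ and $\operatorname{Hdg}^4(X,\Z)=\operatorname{H}^4(X,\Z)$. Combining these, the Voisin obstruction
\[
Z^4(X):=\operatorname{Hdg}^4(X,\Z)/\operatorname{Im}(c^2)
\]
vanishes, and so does its torsion subgroup.

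The main obstacle is then to upgrade the CTV surjection
\[
\operatorname{H}_{\operatorname{nr}}^3(X,\Q/\Z(2))\twoheadrightarrow Z^4(X)_{\tors}
\]
to an isomorphism, so that the vanishing of the right-hand side forces the vanishing of the left-hand side. For this, I would apply the finer Bloch--Ogus analysis from \cite[Théorème 3.7]{CTV}, which shows that when $\operatorname{H}^3(X,\Z)$ is torsion-free (as is the case for a simply connected Fano threefold), the kernel of the above surjection is itself controlled by $\operatorname{H}^3(X,\Z)_{\tors}$ and hence vanishes. This yields $\operatorname{H}_{\operatorname{nr}}^3(X,\Q/\Z(2))=0$, and Kahn's sequence then upgrades $\kappa^2$ to an isomorphism, completing the proof.
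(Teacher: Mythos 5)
Your reduction to the single nontrivial case $i=2$ and the overall strategy — Kahn's exact sequence together with the vanishing of $\operatorname{H}^3_{\operatorname{nr}}(X,\Q/\Z(2))$, with Grabowski's integral Hodge conjecture for Fano threefolds as the decisive input — is exactly the route the paper takes, and everything up to and including $Z^4(X)=0$ is fine (the digression on $\operatorname{H}^{3,1}=0$ is unnecessary once the integral Hodge conjecture is granted). The gap is in your final step. The kernel of the surjection $\operatorname{H}^3_{\operatorname{nr}}(X,\Q/\Z(2))\twoheadrightarrow Z^4(X)_{\tors}$ in \cite[Théorème 3.7]{CTV} is the group $\operatorname{H}^3_{\operatorname{nr}}(X,\Z(2))\otimes\Q/\Z$; it is \emph{not} controlled by $\operatorname{H}^3_B(X,\Z)_{\tors}$. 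The non-torsion part of $\operatorname{H}^3_{\operatorname{nr}}(X,\Z(2))$ is governed by the coniveau of $\operatorname{H}^3$ (rationally it computes $\operatorname{H}^3_B(X,\Q)/\operatorname{N}^1\operatorname{H}^3_B(X,\Q)$), so, for instance, a smooth quintic threefold has torsion-free $\operatorname{H}^3_B(X,\Z)$ and yet a nonzero kernel term. Torsion-freeness of $\operatorname{H}^3$ buys you nothing here, and as written the argument does not close.

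What actually kills the kernel — and what the paper invokes via \cite[Théorème 3.9]{CTV} — is a support condition on zero-cycles: if $\CH_0(X)$ is supported on a closed subvariety of dimension $\leq 2$, the decomposition of the diagonal forces $\operatorname{H}^3_{\operatorname{nr}}(X,\Z(2))$ to be of finite exponent, hence $\operatorname{H}^3_{\operatorname{nr}}(X,\Z(2))\otimes\Q/\Z=0$ and the surjection becomes an isomorphism $\operatorname{H}^3_{\operatorname{nr}}(X,\Q/\Z(2))\simeq Z^4(X)$. For a Fano threefold this hypothesis is available either because $X$ is rationally connected (so $\CH_0(X)\simeq\Z$), or, as the paper argues, because Grabowski's representability of $\CH^2(X)_{\alg}$ combined with \cite[Proposition 1.6]{Jann} shows $\CH_0(X)$ is supported on a curve. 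With this substitution your argument is complete: $Z^4(X)=0$ then yields $\operatorname{H}^3_{\operatorname{nr}}(X,\Q/\Z(2))=0$, and the cases $i=0,1$ and $i\geq 3$ are handled as you (and the paper) indicate.
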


\begin{proof}
For codimensions $0$ and $1$, the result holds for all smooth projective varieties over a field $k$ with $\operatorname{char}(k) = 0$. If $Y$ is a smooth projective variety over a field $k = \bar{k}$ of characteristic zero, then $\CH^{\dim(Y)}_\et(Y) \simeq \CH^{\dim(Y)}(Y)$. For the last étale Chow group, we consider the short exact sequence:
\[
0 \to \CH^2(X) \xrightarrow{\kappa^2} \CH^2_\et(X) \to \text{H}^3_{\text{nr}}(X, \Q/\Z(2)) \to 0.
\]
By \cite[Théorème 3.9]{CTV}, if $\CH_0(Y)$ is supported on a subvariety of $Y$ of dimension at most $2$, then the obstruction to the integral Hodge conjecture is isomorphic to $\text{H}^3_{\text{nr}}(X, \Q/\Z(2))$.

Furthermore, by \cite[Proposition 1.6]{Jann}, the condition that $\CH^2(X)_{\text{alg}}$ is representable is equivalent to the condition that $\CH_0(X)$ is supported on a subvariety of $X$ of dimension $1$. Since the integral Hodge conjecture holds for $X$, it follows that $\text{H}^3_{\text{nr}}(X, \Q/\Z(2)) = 0$, and therefore $\kappa^2$ is an isomorphism.
\end{proof}

\subsection{Cubic fivefolds}

Let $X\subseteq \mathbb{P}^6$ be a smooth cubic fivefold defined over $\C$. Its classical Chow groups are characterized as follows:
\begin{itemize}
\item $\CH^0(X)\simeq\Z$, generated by the fundamental class of $X$.
\item $\CH^1(X)\simeq\Z$, generated by the class of a hyperplane section $h:=c_1(\mathcal{O}_X(1))$.
\item $\CH^2(X)\simeq\Z$, generated by $h^2$. Since $\CH^2(X)_{\mathrm{hom}}=0$, the group $\CH^2(X)$ is isomorphic to the image of the cycle class map.
\item $\CH^3(X)\simeq\Z\oplus J^5(X)$, where $J^5(X)$ is a principally polarized abelian variety of dimension $21$ (see \cite[Théorème 1]{FT}).
\item $\CH^4(X)\simeq\Z$, generated by the class of a line contained in $X$.
\item $\CH^5(X)\simeq\Z$, generated by the class of a point, since $X$ is unirational.
\end{itemize}

\begin{theorem}
Let $X$ be a smooth cubic fivefold defined over $\C$. Then
\[
\kappa^i:\CH^i(X)\xrightarrow{\simeq}\CH^i_\et(X)\quad\text{for all }i\geq0.
\]
\end{theorem}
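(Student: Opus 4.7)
The plan is to dispatch the cases by codimension. The extreme cases are immediate: for $i=0$ both sides equal $\Z$; for $i=1$ the comparison map $\kappa^1$ is an isomorphism onto $\operatorname{Pic}(X)$, since $\kappa^{j,i}$ is an isomorphism whenever $j\leq i+1$; for $i=5=\dim X$, the claim is Proposition \ref{lemGe} (Suslin rigidity); and for $i\geq 6$ both groups vanish for trivial reasons. The genuine cases to treat are therefore $i=2,3,4$.

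For $i=2$, I would use the short exact sequence
$$0 \to \CH^2(X) \xrightarrow{\kappa^2} \CH^2_\et(X) \to \operatorname{H}^3_{\operatorname{nr}}(X,\Q/\Z(2)) \to 0$$
and reduce the problem to showing $\operatorname{H}^3_{\operatorname{nr}}(X,\Q/\Z(2))=0$. The argument should follow the pattern of Theorem \ref{tors} and Corollary \ref{zero1}: starting from the decomposition of the diagonal of the cubic fivefold established by Fu--Tian in \cite{FT}, one forces the Zariski-sheaf cohomology groups $\operatorname{H}^p(X,\mathcal{H}^{p+k}(\Z))$ to be torsion in the relevant bi-degrees, after which a Bloch--Ogus spectral sequence argument analogous to the one already used for $X_{(2,2,2)}$ yields the desired vanishing. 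The codimension $4$ case is treated symmetrically by applying the same diagonal decomposition to $1$-cycles, using that $\CH^4(X)=\Z$ is generated by the class of a line $\ell\subset X$.

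The main case is $i=3$. Here I would exploit the commutative diagram
$$\begin{tikzcd}
0 \arrow[r] & \CH^3(X)_{\hom} \arrow[r]\arrow[d] & \CH^3(X) \arrow[r]\arrow[d,"\kappa^3"] & \Z\, h^3 \arrow[r]\arrow[d,"="] & 0 \\
0 \arrow[r] & \CH^3_\et(X)_{\hom} \arrow[r] & \CH^3_\et(X) \arrow[r] & \Z\, h^3 \arrow[r] & 0
\end{tikzcd}$$
in which both rows split, and show that the leftmost vertical map is an isomorphism. By Fu--Tian, $\CH^3(X)_{\hom}=\CH^3(X)_{\alg}\simeq J^5(X)(\C)$, a divisible group with torsion subgroup $J^5(X)_{\tors}$. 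On the étale side, $\CH^3_\et(X)_{\hom}$ is divisible by construction; its rational part agrees with that of $\CH^3(X)_{\hom}$ via the canonical isomorphism $\CH^3(X)_\Q\simeq \CH^3_\et(X)_\Q$; and its torsion is identified with $J^5(X)_{\tors}$ via the Abel--Jacobi--Deligne exact sequence, using that $\operatorname{H}^5(X,\Z)$ is torsion-free (Lefschetz for complete intersections) and that $\operatorname{Hdg}^6(X,\Z)=\Z h^3$ carries no torsion. Since a divisible abelian group is determined by its $\Q$-vector-space rank and its torsion subgroup, the leftmost vertical arrow is an isomorphism, and hence so is $\kappa^3$.

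The hard part will be the vanishing of the unramified cohomology groups in codimensions $2$ and $4$. Unlike the intersections of two or three quadrics analysed in the previous section, a smooth cubic fivefold is not known to be rational, only unirational, so the clean birational-transfer argument used in the proofs of Corollary \ref{zero} and Corollary \ref{zero1} is not available. One must instead rely on the more delicate diagonal decomposition supplied by Fu--Tian and carefully bookkeep the codimension of each support when propagating it through the Bloch--Ogus spectral sequence, which is the principal technical obstacle in the argument.
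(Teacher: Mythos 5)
Your treatment of $i=0,1,2,5$ matches the paper's in substance (for $i=2$ the paper simply quotes $\operatorname{H}^3_{\operatorname{nr}}(X,\Q/\Z)=0$, which for a unirational fivefold follows from the decomposition of the diagonal, as you indicate), and your observation that $\kappa^4$ is injective because $\CH^4(X)\simeq\Z$ is torsion-free is exactly the paper's starting point for $i=4$. The real divergence is at $i=3$, and there your argument has a genuine gap. You establish (granting Fu--Tian and Rosenschon--Srinivas) that $\CH^3(X)_{\hom}$ and $\CH^3_\et(X)_{\hom}$ are both divisible, have the same rational rank, and have torsion subgroups abstractly isomorphic to $J^5(X)_{\tors}$, and you then invoke the structure theorem for divisible abelian groups to conclude that the leftmost vertical arrow is an isomorphism. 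But the structure theorem only says the two groups are \emph{abstractly} isomorphic; it says nothing about the particular map $\kappa^3|_{\hom}$. For instance, $(x,y)\mapsto(x,0)$ on $\Q\oplus\Q/\Z$ is rationally an isomorphism between divisible groups with identical torsion, yet is neither injective nor surjective. To salvage the argument you would need to prove that $\kappa^3$ restricted to torsion is itself an isomorphism onto $\CH^3_\et(X)_{\tors}$ (after which divisibility together with the rational isomorphism do force $\kappa^3|_{\hom}$ to be bijective); this requires the compatibility of the classical and \'etale Abel--Jacobi maps under $\kappa^3$ and the injectivity of $\Phi_X$ on torsion classes, neither of which you address.

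The paper avoids this issue entirely: for $i=3$ and $i=4$ it identifies $\operatorname{coker}(\kappa^i)$ with, and bounds $\ker(\kappa^i)$ by, hypercohomology groups of the truncated complex $\tau_{\geq i+2}R\pi_*\Z_X(i)_\et$, whose Zariski hypercohomology spectral sequence has $E_2$-terms of the form $\operatorname{H}^p_{\operatorname{Zar}}(X,\mathcal{H}^{q-1}_\et(\Q/\Z(i)))$ for $q\geq i+2$; these vanish in the relevant bidegrees by the same unramified-cohomology and diagonal-decomposition input you already invoke for $i=2$. If you want to keep your outline, replace the divisible-group step by this kernel/cokernel computation, or else supply the missing compatibility of Abel--Jacobi maps on torsion.
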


\begin{proof}
For $i=0,1,5$, the comparison maps are isomorphisms:
\[
\kappa^i:\CH^i(X)\xrightarrow{\simeq}\CH^i_\et(X).
\]
Since $\text{H}^3_{\mathrm{nr}}(X,\Q/\Z)=0$ (cf. \cite[Théorème 3]{FT}), it follows that $\kappa^2:\CH^2(X)\xrightarrow{\simeq}\CH^2_\et(X)$ is an isomorphism. For $i=4$, the comparison map $\kappa^4:\CH^4(X)\longrightarrow\CH^4_\et(X)$ is \emph{a priori} injective because $\CH^4(X)$ is torsion-free. The cokernel of $\kappa^4$ is isomorphic to
\[
\mathbb{H}^8_{\mathrm{Zar}}\!\left(X,\tau_{\geq6}R\pi_*\Z_X(4)_\et\right),
\]
and there exists a spectral sequence
\[
E^{p,q}_2:=\text{H}^p_{\mathrm{Zar}}\!\left(X,R^q\tau_{\geq6}R\pi_*\Z_X(4)_\et\right)\Longrightarrow\mathbb{H}^{p+q}_{\mathrm{Zar}}\!\left(X,\tau_{\geq6}R\pi_*\Z_X(4)_\et\right).
\]
Moreover,
\[
\text{H}^p_{\mathrm{Zar}}\!\left(X,R^q\tau_{\geq6}R\pi_*\Z_X(4)_\et\right)\simeq
\begin{cases}
\text{H}^p_{\mathrm{Zar}}\!\left(X,\mathcal{H}^{q-1}_\et(\Q/\Z(4))\right),&\text{if }q\geq6,\\
0,&\text{otherwise}.
\end{cases}
\]
From this and \cite[Théorème 3]{FT}, we deduce that the cokernel of $\kappa^4$ vanishes, hence $\kappa^4:\CH^4(X)\xrightarrow{\simeq}\CH^4_\et(X)\simeq\Z$ is an isomorphism. For $i=3$, the cokernel of $\kappa^3$ vanishes, so $\kappa^3$ is surjective. Its kernel is a subgroup of
\[
\mathbb{H}^5_{\mathrm{Zar}}\!\left(X,\tau_{\geq5}R\pi_*\Z_X(3)_\et\right),
\]
which, by the same reasoning, is trivial. Therefore, $\kappa^3:\CH^3(X)\xrightarrow{\simeq}\CH^3_\et(X)$ is an isomorphism.
\end{proof}

\subsection{Intersections of two and three quadrics}

In contrast to the classical setting, where one typically has explicit control of some of the Chow groups only, in the case of étale Chow groups we can compute \emph{all} of them. This is due to the fact that, for the varieties considered here, the rational Hodge conjecture holds, and we can apply the results of Rosenschon and Srinivas \cite{RS} to obtain a complete description in every codimension.

\begin{prop}
Let $X_{(2,2)}\subset\mathbb{P}^{2m+3}$ be a smooth intersection of two quadrics. Then the étale Chow groups of $X$ are given by:
\[
\CH_\et^i(X)\simeq
\begin{cases}
\Z[h^i], & \text{if } i\neq m+1,\\
\Z[h^{m+1}]\oplus J^{2m+1}(X), & \text{if } i=m+1,
\end{cases}
\]
where $h$ is a hyperplane section, and $J^{2m+1}(X)$ is the $(2m+1)$-intermediate Jacobian, which is isomorphic to the Jacobian of a smooth projective curve $C$.
\end{prop}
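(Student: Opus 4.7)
The strategy is to split the short exact sequence
\[
0 \longrightarrow \CH^i_\et(X)_{\hom} \longrightarrow \CH^i_\et(X) \xrightarrow{c^i_\et} \operatorname{Im}(c^i_\et) \longrightarrow 0,
\]
and to identify the image and the kernel of $c^i_\et$ separately. Since $\dim(X)=2m+1$ is odd, every even-degree integral cohomology group $\operatorname{H}^{2i}_B(X,\Z)$ is free of rank one, generated by $h^i$ for all $0 \le i \le 2m+1$ (by the Lefschetz hyperplane theorem for $2i \le 2m$ and Poincaré duality for $2i \ge 2m+2$). Since $h^i$ is algebraic, the image of $c^i_\et$ equals $\Z[h^i] = \operatorname{Hdg}^{2i}(X,\Z)$ in every codimension, and the section $1 \mapsto h^i$ splits the sequence, yielding $\CH^i_\et(X) \simeq \Z[h^i] \oplus \CH^i_\et(X)_{\hom}$.

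For codimension $i \ne m+1$, the torsion-freeness proposition established in \S \ref{section:Chow etale} shows that $\CH^i_\et(X)_{\hom}$ is torsion-free. Since it is also divisible (being the maximal divisible subgroup of $\CH^i_\et(X)$), it is a $\Q$-vector space. Tensoring the split sequence with $\Q$ and using the rational comparison $\CH^i_\et(X)_\Q \simeq \CH^i(X)_\Q$ from \cite{Kahn}, one reduces to computing $\CH^i(X)_\Q$. Beauville \cite{Beau} proves that $\operatorname{Griff}^i(X) = 0$ and $\CH^i(X)_{\alg} \simeq J^{2i-1}_{\alg}(X)$; since $\operatorname{H}^{2i-1}_B(X,\Q) = 0$ for $i \ne m+1$ by the discussion above, this Jacobian is trivial, whence $\CH^i_\et(X)_{\hom} \otimes_\Z \Q = 0$ and therefore $\CH^i_\et(X)_{\hom} = 0$.

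For $i = m+1$, the group $\CH^{m+1}_\et(X)_{\hom}$ is again divisible. Its torsion part is identified with $J^{2m+1}(X)_{\tors} = J(C)_{\tors} \simeq (\Q/\Z)^{\oplus 2(m+1)}$ by the torsion computation that accompanies the torsion-freeness proposition, where $C$ denotes the hyperelliptic curve of genus $m+1$ of \cite{reid}. Its rationalisation matches $J(C) \otimes_\Z \Q$ via the rational comparison combined with Beauville's Abel-Jacobi isomorphism $\CH^{m+1}(X)_{\hom} \simeq J(C)$. The structure theorem for divisible abelian groups---which classifies them up to isomorphism by their torsion subgroup together with the $\Q$-dimension of their torsion-free quotient---then yields $\CH^{m+1}_\et(X)_{\hom} \simeq J(C)$, and the proposition follows.

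The main obstacle is the middle codimension, where the identification of $\CH^{m+1}_\et(X)_{\hom}$ is obtained by matching its torsion and its rationalisation with those of $J(C)$ rather than by constructing an explicit étale Abel-Jacobi isomorphism. The rational comparison with the classical Chow group is essential here in order to rule out the appearance of any extra uniquely divisible summand beyond $J(C)$.
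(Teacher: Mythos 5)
Your proof is correct and follows essentially the same route as the paper's: both split off the image $\Z[h^i]$ of the cycle class map and then identify $\CH^i_\et(X)_{\hom}$ by matching its torsion subgroup with $J^{2i-1}(X)_{\tors}$ and its rationalisation with $\CH^i(X)_{\hom}\otimes\Q$ via the comparison $\CH^i_\et(X)_\Q\simeq\CH^i(X)_\Q$. Your explicit appeal to the structure theorem for divisible abelian groups merely makes precise a step the paper leaves implicit.
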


\begin{proof}
This follows directly from \cite{RS}. Since $\CH^i_\et(X)\to\mathrm{Hdg}^{2i}(X,\Z)$ is surjective if and only if $\CH^i(X)_{\Q}\to\mathrm{Hdg}^{2i}(X,\Q)$ is surjective, we have that $\CH^i_\et(X)\to\Z$ is surjective. Moreover, $\CH^i_\et(X)_{\mathrm{hom}}\simeq\CH^i_\et(X)_{\mathrm{tor}}$, and the torsion part of $\CH^i_\et(X)_{\mathrm{hom}}$ (by \cite[Proposition~5.1]{RS}) is isomorphic to $J^i(X)_{\mathrm{tors}}$. For the torsion-free part, we use that $\CH^i_\et(X)_\Q\simeq\CH^i(X)_\Q$ and that the torsion-free part of $J^{2m+1}(X)$ is uniquely divisible.
\end{proof}

\begin{prop}
Let $X_{(2,2,2)}\subset\mathbb{P}^{2m+4}$ be a smooth intersection of three quadrics. Then the étale Chow groups of $X$ are given by:
\[
\CH^i_\et(X)\simeq
\begin{cases}
\Z[h^i], & \text{if } i\neq m+1,\\
\Z[h^{m+1}]\oplus J^{2m+1}(X), & \text{if } i=m+1,
\end{cases}
\]
where $h$ is a hyperplane section, and $J^{2m+1}(X)$ is the $(2m+1)$-intermediate Jacobian of $X$.
\end{prop}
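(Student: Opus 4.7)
The plan is to mirror, essentially verbatim, the argument just given for smooth intersections of two quadrics, since the same two inputs remain available here. First, by Lefschetz hyperplane section theorem, $\operatorname{H}^{*}(X,\Z)$ is torsion-free and $\operatorname{Hdg}^{2i}(X,\Z)=\Z[h^i]$ for all $0\le i\le 2m+1$; moreover, $\operatorname{H}^{2i-1}(X,\Z)=0$ whenever $i\ne m+1$. Since all Hodge classes are powers of $h$, the rational Hodge conjecture holds trivially for $X$, so the machinery of \cite{RS} applies in every codimension; the level-one condition on the middle Hodge structure makes $J^{2m+1}(X)$ an honest principally polarised abelian variety, realised as the Prym variety of \cite{Beau}.

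For $i\ne m+1$, the étale cycle class map of \cite{RS} surjects onto $\operatorname{Hdg}^{2i}(X,\Z)=\Z[h^i]$, and its kernel $\CH^i_\et(X)_{\hom}$ is the maximal divisible subgroup, whose torsion is identified with $J^{2i-1}(X)_{\tors}=0$. Being torsion-free and divisible, $\CH^i_\et(X)_{\hom}$ is a $\Q$-vector space; the rational comparison isomorphism $\CH^i_\et(X)_\Q\simeq\CH^i(X)_\Q\simeq\Q[h^i]$ then forces it to vanish. Hence $\CH^i_\et(X)\simeq\Z[h^i]$.

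For $i=m+1$, the surjection $\CH^{m+1}_\et(X)\twoheadrightarrow\operatorname{Hdg}^{2m+2}(X,\Z)=\Z[h^{m+1}]$ splits via the algebraic section given by $h^{m+1}$, so it suffices to identify the divisible kernel with $J^{m+1}(X)$ as an abstract abelian group. By \cite[Proposition 5.1]{RS} its torsion subgroup is $J^{2m+1}(X)_{\tors}$, while rationally $\CH^{m+1}_\et(X)_{\hom,\Q}\simeq\CH^{m+1}(X)_{\hom,\Q}$ corresponds, under the Abel--Jacobi map, to the uniquely divisible part of the abelian variety $J^{m+1}(X)$. Combining these two pieces yields $\CH^{m+1}_\et(X)\simeq\Z[h^{m+1}]\oplus J^{m+1}(X)$, as claimed.

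The main obstacle is the last identification: one must ensure that the rational Abel--Jacobi map $\CH^{m+1}(X)_{\hom,\Q}\to J^{2m+1}(X)_\Q$ is an isomorphism, equivalently that the rational Griffiths group $\operatorname{Griff}^{m+1}(X)_\Q$ vanishes. For $m\ge 2$ this follows from a Bloch--Srinivas-type decomposition of the diagonal, which is available thanks to the rationality of $X$ (and hence its universal $\CH_0$-triviality), arguing as in \S\ref{subsec:Chow 3 quadrics}; the surjectivity of the Abel--Jacobi map onto the Prym variety is precisely \cite[Théorème 6.3]{Beau}. The remaining low-dimensional case is covered by the Fano threefold discussion treated earlier in this section.
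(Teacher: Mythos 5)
Your proposal is correct and follows essentially the same route as the paper, which simply transports the two-quadrics argument via \cite{RS}: surjectivity onto $\operatorname{Hdg}^{2i}(X,\Z)$ from its rational analogue, identification of the torsion of the divisible kernel with $J^{2i-1}(X)_{\tors}$ via \cite[Proposition~5.1]{RS}, and the rational comparison $\CH^i_\et(X)_\Q\simeq\CH^i(X)_\Q$ for the torsion-free part. You are in fact more explicit than the paper about the one genuinely delicate input --- that the rational Abel--Jacobi map is an isomorphism, i.e.\ $\operatorname{Griff}^{m+1}(X)_\Q=0$ together with \cite[Théorème~6.3]{Beau} --- which the paper leaves implicit in its appeal to \S\ref{subsec:Chow 3 quadrics}.
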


\begin{proof}
Again, this follows from \cite{RS}. The same argument as in the previous proposition applies: the surjectivity of $\CH^i_\et(X)\to\mathrm{Hdg}^{2i}(X,\Z)$ follows from its rational analogue, and the identification of $\CH^i_\et(X)_{\mathrm{hom}}$ with its torsion part reduces the problem to understanding $J^{2i-1}(X)_{\mathrm{tors}}$. The torsion-free part is obtained from the equality $\CH^i_\et(X)_\Q\simeq\CH^i(X)_\Q$.
\end{proof}

\begin{remark}
The above result, together with the discussion in \S \ref{subsec:Chow 2 quadrics} and \S \ref{subsec:Chow 3 quadrics}, implies that if $X$ is a $5$-dimensional intersection of two or three quadrics then $\kappa^i:\operatorname{CH}_{\et}^i(X)\xrightarrow{\simeq} \operatorname{CH}^i(X)$ are isomorphisms for all $i\geq 0$.
\end{remark}

\section{Decomposition of finite dimensional motives}\label{section:Motives}

In this section, we show that the previous results on the characterization of Chow groups (and \'{e}tale Chow groups) also hold in a motivic framework. The key point is the finite dimensionality of these motives in the sense of Kimura, see \cite{kim} for details. This condition is more easily verified for varieties with Hodge structures of level~1. For instance, the motives of cubic threefolds and fivefolds are finite-dimensional \cite[Chapter~7, \S4]{huyB}, whereas the finite dimensionality of the cubic fourfold motive remains unknown.

More generally, we will observe that for the varieties that we are interested in, the motivic decomposition is given by the direct sum of Lefschetz motives and a $1$-motive given by the associated intermediate Jacobian, and thus the finite-dimensionality can be checked more easily.

We briefly recall the construction of the category of Chow motives and some of the main properties of their decompositions for the reader's convenience.

\vspace{2mm}

Let $\text{SmProj}_k$ denote the category of smooth projective varieties over a field $k$. If $X \in \text{SmProj}_k$ has pure dimension $d$, the group of degree $r$ correspondences from $X$ to $Y$ is defined by $\text{Corr}^r (X, Y )_\Q = \CH^{d+r} (X \times Y )_\Q$. More generally, if $X_1,\ldots,X_j$ are the irreducible components of $X$, we set $\text{Corr}^r (X, Y )_\Q = \bigoplus_{i=0}^j \text{Corr}^{r} (X_i, Y )_\Q$. Given elements $f \in \text{Corr}^r (X, Y )$ and $g \in  \text{Corr}^s(Y, Z)$, their composition $g \circ f \in  \text{Corr}^{r+s}(X, Z)$ is given by
\[
    g \circ f = (\text{pr}_{XZ})_*\left(\text{pr}_{XY}^*(f)\cdot\text{pr}_{YZ}^*(g)\right),
\]
where $\text{pr}_{XZ}$, $\text{pr}_{XY}$, and $\text{pr}_{YZ}$ denote the projections of $X \times Y \times Z$ onto $X\times Z$, $X\times Y$, and $Y\times Z$, respectively. If $X$ and $Y$ are objects in $\text{SmProj}_k$ of pure dimensions $d_X$ and $d_Y$, and $f:X\to Y$ is a morphism, then for the graph $\Gamma_f \subset X\times Y$ we define
\[
    f_* := \Gamma_f \in \text{Corr}^{d_Y-d_X}(X,Y), \qquad f^* := {}^t \Gamma_f \in \text{Corr}^{0}(X,Y),
\]
where ${}^t \Gamma_f$ denotes the transpose of $\Gamma_f$.

We can now recall the construction of the category of Chow effective motives over $k$, denoted $\ch^{\eff}(k)_\Q$. Its objects are pairs $(X,p)$ with $X \in \text{SmProj}_k$ and $p \in \text{Corr}^0(X, X)$ satisfying $p \circ p=p$. Morphisms between two objects $(X,p)$ and $(Y,q)$ are given by
\[
    \mathrm{Hom}_{\ch^{\eff}(k)_\Q}\big((X,p),(Y,q)\big) = q \circ \text{Corr}^0(X,Y)_\Q \circ p.
\]

The category of Chow motives $\ch(k)_\Q$ has as objects triplets $(X,p,m)$ with $X \in \text{SmProj}_k$, $p \in \text{Corr}^0(X, X)$ idempotent, and $m \in \Z$. Morphisms between $(X,p,m)$ and $(Y,q,n)$ are defined by
\[
    \mathrm{Hom}_{\ch(k)_\Q}\big((X,p,m),(Y,q,n)\big) = q \circ \text{Corr}^{\,n-m}(X,Y)_\Q \circ p.
\]

Finally, there is a contravariant functor
\begin{align*}
    h: \text{SmProj}_k &\to \ch(k)_\Q \\
    X &\mapsto h(X):=(X,\Delta_X,0) \\
    f:X \to Y &\mapsto h(f)={}^t\Gamma_f:h(Y)\to h(X).
\end{align*}

\begin{remark}
    The category $\ch(k)_\Q$ is additive and pseudo-abelian (i.e.\ every idempotent morphism has both a kernel and a cokernel). Moreover, if we consider the triangulated category of motives $\operatorname{DM}(k,\Q)$, there is a fully faithful embedding of $1$-categories
    \[
        \ch(k)^{\mathrm{opp}}_\Q \hookrightarrow \operatorname{DM}(k,\Q).
    \]
    Another important point is that Chow motives appear in $\operatorname{DM}(k,\Q)$ as the weight-$0$ objects, with respect to the weight structure constructed by Bondarko \cite{Bon}.
\end{remark}

\noindent\textbf{Examples and further properties of motives.}  
In what follows we assume that $X$ has pure dimension $d$: 

\begin{enumerate}
    \item The motive of a point is $\mathbf{1}:=(\mathrm{Spec}(k),\mathrm{id},0)$.
    
    \item If $X \in \text{SmProj}_k$ with $X(k)\neq \emptyset$, define $p_0(X):= e \times X$ and $p_{2d}(X):={}^tp_0(X)=X\times e$. These projectors are orthogonal, i.e.\ $p_{2d}(X)\circ p_0(X)=p_0(X)\circ p_{2d}(X)=0$. This yields the motives
    \[
        h^0(X):=(X,p_0(X),0), \qquad h^{2d}(X):=(X,p_{2d}(X),0).
    \]
    
    \item The Lefschetz and Tate motives are defined by
    \[
        \mathbf{L}:=(\mathbb{P}^1_k,\mathbb{P}^1_k\times e,0), \qquad \mathbf{T}:=(\mathrm{Spec}(k),\mathrm{id},1).
    \]
    
    \item Direct sums of motives: if $M=(X,p,m)$ and $N=(Y,q,m)$ are two motives, then
    \[
        M\oplus N = \big(X \coprod Y, p\oplus q, m\big).
    \]
    
    \item Monoidal structure: the category $\ch(k)_\Q$ is a tensor category with
    \[
        \ch(k)_\Q \times \ch(k)_\Q \to \ch(k)_\Q, \quad 
        (X,p,m)\otimes (Y,q,n) \mapsto (X\times Y,p\otimes q,m+n).
    \]
    Under this structure, the Lefschetz and Tate motives are inverses: $\mathbf{L}\otimes \mathbf{T}=\mathbf{1}$. Moreover, $h^{2d}(X)\simeq \mathbf{L}^{\otimes d}$.
    
    \item Duality operator:
    \[
        {}^\vee: \ch(k)_\Q^{\mathrm{opp}} \to \ch(k)_\Q, \quad
        M=(X,p,m) \mapsto M^\vee=(X,{}^tp,d-m).
    \]
    
    \item Chow groups and cohomology of motives: for $M=(X,p,m)$ and $i\in \N$, we set
    \[
        \CH^i(M):=p_\ast\big(\CH^{i+m}(X)_\Q\big) \subset \CH^{i+m}(X)_\Q.
    \]
    For a Weil cohomology theory $\operatorname{H}(-)$ we define $\operatorname{H}^i(M):=p_\ast(\operatorname{H}^{i+2m}(X))\subset \operatorname{H}^{i+2m}(X)$.
    
    \item If $X \in \text{SmProj}_k$ with $X(k)\neq \emptyset$, the correspondence $p^+(X)=\Delta_X-p_0(X)-p_{2d}(X)$ is a projector, since $p_0(X)$ and $p_{2d}(X)$ are orthogonal. Defining $h^+(X):=(X,p^+(X),0)$, we obtain a decomposition
    \[
        h(X)\simeq h^0(X)\oplus h^+(X) \oplus h^{2d}(X)
        \simeq \mathbf{1} \oplus h^+(X) \oplus \mathbf{L}^{\otimes d}.
    \]
\end{enumerate}

We now add a few remarks on the latter point. Fix a Weil cohomology theory $\operatorname{H}(-)$ and consider the diagonal class $\Delta_X$ of $X$. Let $\gamma_{X\times X}$ denote the cycle class map to $\operatorname{H}(-)$. By the Künneth formula one has
\[
    \gamma_{X\times X}(\Delta_X)=\sum_{i=0}^{2d}\Delta_i^{\mathrm{topo}}(X) \in H^{2d}(X\times X)\simeq \bigoplus_{i=0}^{2d} H^{2d-i}(X)\otimes H^i(X),
\]
with the convention that $\Delta_i^{\mathrm{topo}}(X) \in \operatorname{H}^{2d-i}(X)\otimes \operatorname{H}^i(X)$. Since $\operatorname{H}^{2d-i}(X)\otimes \operatorname{H}^i(X) \simeq \mathrm{End}(\operatorname{H}^i(X))$, we may regard $\Delta_i^{\mathrm{topo}}(X)$ as the identity on $\operatorname{H}^i(X)$. Thus, $\Delta_i^{\mathrm{topo}}(X)$ acts on $\operatorname{H}^*(X)$ as the identity on $\operatorname{H}^i(X)$ and as zero on all other summands of the above decomposition. This motivates the following definition.

\begin{defi}
    Let $X \in \mathrm{SmProj}_k$ be of pure dimension $d$. We say that $X$ admits a \emph{Chow-Künneth decomposition} if there exist projectors $p_i(X) \in \mathrm{Corr}^0(X,X)_\Q$ for $0\leq i \leq 2d$ such that:
    \begin{enumerate}
        \item $\sum_{i=0}^{2d}p_i(X) = \Delta_X$.
        \item The projectors $p_i(X)$ are pairwise orthogonal, i.e.
        \[
            p_i(X)\circ p_j(X)=
            \begin{cases}
                0 & \text{if } i\neq j,\\
                p_i(X) & \text{if } i=j,
            \end{cases}
        \]
        \item $\gamma_{X\times X}(p_i(X))=\Delta_i^{\mathrm{topo}}(X)$.
    \end{enumerate}
\end{defi}

It is conjectured that every smooth projective variety $X$ admits a Chow-Künneth decomposition. A result of Murre shows that, for a smooth projective variety $X \in \mathrm{SmProj}_k$, there exist projectors $p_1(X)$ and $p_{2d-1}(X)$ with the following properties:

\begin{theorem}[{\cite{murre}}]
    Let $X \in \mathrm{SmProj}_k$ be smooth projective of dimension $d$. Then there exist $p_1(X), p_{2d-1}(X) \in \mathrm{Corr}^0(X,X)_\Q$ such that:
    \begin{enumerate}
        \item $p_1(X)$ and $p_{2d-1}(X)$ are orthogonal to each other and to $p_0(X)$ and $p_{2d}(X)$. They lift the Künneth components $\Delta_1^{\mathrm{topo}}$ and $\Delta_{2d-1}^{\mathrm{topo}}$, respectively, and satisfy $p_{2d-1}(X)={}^tp_{1}(X)$.
        \item Let $h^1(X)=(X,p_1(X),0) \in \operatorname{Chow}(k)_\Q$. Then
        \[
            \operatorname{H}^i(h^{1}(X))=
            \begin{cases}
                0 & \text{if } i\neq 1,\\
                \operatorname{H}^{1}(X) & \text{if } i=1,
            \end{cases}
            \qquad
            \CH^i(h^{1}(X))_\Q=
            \begin{cases}
                0 & \text{if } i\neq 1,\\
                \mathrm{Pic}^0(X)_\Q & \text{if } i=1.
            \end{cases}
        \]
        \item Let $h^{2d-1}(X)=(X,p_{2d-1}(X),0) \in \operatorname{Chow}(k)_\Q$. Then
       \begin{align*}
\operatorname{H}^i(h^{2d-1}(X)) &=
\begin{cases}
0 & \text{if } i\neq 2d-1,\\
\operatorname{H}^{2d-1}(X) & \text{if } i=2d-1,
\end{cases}\\[0.8em]
\CH^i(h^{2d-1}(X))_\Q &=
\begin{cases}
0 & \text{if } i\neq 2d-1,\\
\mathrm{Alb}(X)_\Q & \text{if } i=2d-1.
\end{cases}
\end{align*}
    \end{enumerate}
\end{theorem}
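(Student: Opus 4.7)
The plan is to construct $p_{2d-1}(X)$ first and then set $p_1(X):={}^tp_{2d-1}(X)$; the geometric input is a curve on $X$ whose Jacobian surjects onto $\mathrm{Alb}(X)$. More precisely, by Bertini one chooses $d-1$ sufficiently general hyperplane sections whose intersection is a smooth projective curve $j\colon C\hookrightarrow X$. The Lefschetz hyperplane theorem (applied iteratively) ensures that the induced homomorphism $j_*\colon J(C)\twoheadrightarrow \mathrm{Alb}(X)$ is surjective; Poincar\'e complete reducibility then yields a homomorphism $s\colon \mathrm{Alb}(X)\to J(C)$ together with an integer $N>0$ such that $j_*\circ s=N\cdot\mathrm{id}_{\mathrm{Alb}(X)}$.

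Next, I would translate the pair $(j,s)$ into cycle classes on products. The graph of $j$ gives a correspondence $\Gamma_j\in \CH^d(C\times X)_\Q$, while pulling back the Poincar\'e line bundle on $J(C)\times J(C)$ along $(s\circ\mathrm{alb}_X)\times \iota_C$, with $\iota_C\colon C\hookrightarrow J(C)$ the Abel--Jacobi embedding, produces a divisor class $\vartheta\in \CH^1(X\times C)_\Q$ representing $s$ as a correspondence. Set
\[
    \widetilde p_{2d-1}(X):=\frac{1}{N}\,\Gamma_j\circ \vartheta \ \in \ \mathrm{Corr}^0(X,X)_\Q.
\]
Using the K\"unneth formula, Poincar\'e duality, and the relation $j_*\circ s=N\cdot \mathrm{id}$, a direct computation shows that $\widetilde p_{2d-1}(X)$ acts as the identity on $H^{2d-1}(X,\Q)$ and as zero on all other K\"unneth components; in particular, it lifts $\Delta^{\mathrm{topo}}_{2d-1}$ cohomologically.

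The correspondence $\widetilde p_{2d-1}(X)$ is idempotent only modulo homological equivalence. To upgrade it to a true projector, I would invoke the classical idempotent-lifting principle: since the difference $\widetilde p_{2d-1}(X)^{\circ 2}-\widetilde p_{2d-1}(X)$ is cohomologically trivial, a standard polynomial expression in $\widetilde p_{2d-1}(X)$ with rational coefficients converts it into an honest idempotent with the same cohomology class. To enforce orthogonality with $p_0(X)$ and $p_{2d}(X)$, conjugate the result by $\Delta_X-p_0(X)-p_{2d}(X)$ and repeat the lifting; the outcome is $p_{2d-1}(X)$, and I would then set $p_1(X):={}^tp_{2d-1}(X)$, whose projector and orthogonality properties follow by transposition.

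The cohomological formulas for $h^1(X)$ and $h^{2d-1}(X)$ follow immediately from the K\"unneth property of these projectors. For the Chow-theoretic statements, the key observation is that $p_{2d-1\,*}$ factors through the curve $C$ and the Albanese variety via the divisor correspondence $\vartheta$; consequently it vanishes on $\CH^i(X)_\Q$ outside the relevant index and restricts to (a nonzero multiple of) the Albanese map on zero-cycles modulo algebraic equivalence, whose image after tensoring with $\Q$ is precisely $\mathrm{Alb}(X)_\Q$. The dual statement $\mathrm{Pic}^0(X)_\Q$ for $h^1(X)$ then follows by transposition together with the duality $\mathrm{Pic}^0(X)\simeq \mathrm{Alb}(X)^\vee$. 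The main obstacle is the two-stage idempotent-lifting together with the requirement that orthogonality with $p_0$ and $p_{2d}$ be preserved; this is precisely where rational coefficients become indispensable, since the argument exploits the nilpotence of the ideal of homologically trivial correspondences.
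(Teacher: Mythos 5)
The paper does not actually prove this statement --- it is quoted from Murre --- so the only meaningful comparison is with the standard construction in the literature, which your outline follows in its broad architecture: a complete-intersection curve $C\subset X$, the surjection $J(C)\twoheadrightarrow \mathrm{Alb}(X)$ split up to isogeny by Poincar\'e reducibility, a divisorial correspondence $\vartheta\in\CH^1(X\times C)_\Q$ built from the Poincar\'e bundle, and $p_1={}^tp_{2d-1}$ (the degree bookkeeping $\Gamma_j\circ\vartheta\in\mathrm{Corr}^0(X,X)_\Q$ is correct). The genuine gap is the idempotent-lifting step. You assert that because $\widetilde p_{2d-1}^{\,\circ 2}-\widetilde p_{2d-1}$ is homologically trivial, a polynomial expression in $\widetilde p_{2d-1}$ produces an honest idempotent, and you justify this by ``the nilpotence of the ideal of homologically trivial correspondences.'' That ideal is \emph{not} known to be nilpotent for a general $X$: its nilpotence is an open conjecture (it would follow from Kimura finite dimensionality, which the paper elsewhere treats as a substantive hypothesis to be verified case by case). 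Idempotent lifting via a polynomial requires the nilpotence of the specific element $\widetilde p_{2d-1}^{\,\circ 2}-\widetilde p_{2d-1}$, and establishing this is precisely the technical heart of Murre's proof: one shows directly that a homologically trivial correspondence which factors through the curve $C$ via a divisorial correspondence has vanishing square (or a fixed small power zero), because such correspondences are controlled by homomorphisms of abelian varieties ($\mathrm{Pic}^0$, $J(C)$, $\mathrm{Alb}$) on which homological triviality forces triviality. Without this curve-specific nilpotence lemma, your lifting step --- and the subsequent re-lifting after conjugating by $\Delta_X-p_0(X)-p_{2d}(X)$ --- does not close.

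A second, smaller point: for $\widetilde p_{2d-1}$ to annihilate the K\"unneth components in degrees $2d-2$ and $2d$, and for $p_{2d-1,*}$ to vanish on $\CH^{d-1}(X)_\Q$ and to factor through $\mathrm{Alb}(X)$ on zero-cycles (killing the Albanese kernel, which is what the Chow-theoretic statements in (2) and (3) really require), you need the K\"unneth components of $[\vartheta]$ in $\operatorname{H}^2(X)\otimes \operatorname{H}^0(C)$ and $\operatorname{H}^0(X)\otimes \operatorname{H}^2(C)$ to vanish. Your choice of $\vartheta$ as a pullback of the normalized Poincar\'e bundle does secure this, since its restrictions to $X\times\{c\}$ and $\{x\}\times C$ lie in $\mathrm{Pic}^0$ and the resulting map on zero-cycles is biadditive in the Albanese image; but this normalization is doing essential work in the ``direct computation'' you invoke and should be made explicit rather than left implicit.
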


\begin{example}
    The Chow-Künneth decomposition is known to exist in the following cases\footnote{This list is not exhaustive.}: curves, surfaces (\cite{murre}), products of varieties admitting Chow-Künneth decompositions, abelian varieties, uniruled threefolds, complete intersections in projective space, Calabi-Yau threefolds (see \cite[Appendix~C]{MNP}), and double covers of $\mathbb{P}^n$ with $\dim \operatorname{H
    }^n(X,\Q)>1$ (\cite[Proposition~3.6]{LatM}), among others.
\end{example}

\begin{remark}
    Using the triangulated category of étale motives $\operatorname{DM}_\et(k,\Z)$ one can construct an étale analogue of the category $\ch(k)$, denoted $\ch_\et(k)$, which embeds fully faithfully into $\operatorname{DM}_\et(k,\Z)$. Its objects are triples $(X,p,m)$ with $X \in \operatorname{SmProj}_k$, $p \in \operatorname{Corr}^0_\et(X, X)$ satisfying $p \circ p=p$, and $m \in \Z$. Morphisms between two objects $(X,p,m)$ and $(Y,q,n)$ are given by
    \[
        \mathrm{Hom}_{\ch_\et(k)}\big((X,p,m),(Y,q,n)\big) = q \circ \operatorname{Corr}^{\,n-m}_\et(X,Y) \circ p.
    \]
     
    In this case we also have a contravariant functor $h_\et$:
    \begin{align*}
        h_\et: \operatorname{SmProj}_k &\to  \ch_\et(k) \\
        X   &\mapsto h_\et(X):=(X,\Delta_X,0) \\
        f:X \to Y &\mapsto h_\et(f)={}^t\Gamma_f:h_\et(Y)\to h_\et(X).
    \end{align*}
    
    For details on the construction of this category and the proof of the functoriality of étale correspondences, see \cite{RoSo}.    
\end{remark}

We now return to the decomposition of motives for varieties with Hodge structures of level~1, focusing in particular on varieties that are complete intersections. In all these cases, we have Chow-Künneth decomposition.

\begin{theorem}
Let $m \in \N$ and let $X$ be a smooth projective variety with $\operatorname{dim}_\C(X)=2m+1$ in one of the following cases:
\begin{enumerate}
    \item a complete intersection of two quadrics in $\Pro^{2m+3}$;
    \item a complete intersection of three quadrics in $\Pro^{2m+4}$ \emph{(}with smooth discriminant curve\emph{)};
    \item a cubic fivefold in $\Pro^6$;
    \item a Fano threefold.
\end{enumerate}
Then 
\[
    h_\et^{2m+1}(X)\simeq h^1_\et(J^{2m+1}(X))\otimes \mathbf{L}^m.
\]
\end{theorem}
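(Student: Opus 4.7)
The plan is to establish the decomposition first in the rational category of Chow motives $\ch(k)_\Q$, and then transfer the resulting isomorphism to the étale setting using the conservative family of change-of-coefficients functors on $\operatorname{DM}_\et(k,\Z)$. The main technical input is finite-dimensionality in the sense of Kimura of the motives involved.

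The first step is to construct a correspondence between $X$ and its intermediate Jacobian $J^{m+1}(X)$. In each of the four families considered, there is a natural smooth projective parameter variety $F$ of algebraic $m$-cycles on $X$ (the Fano variety of lines for Fano threefolds and for intersections of two quadrics in the threefold case, Fano varieties of $m$-planes in the higher dimensional intersection of two quadrics, the appropriate incidence variety arising from the Prym construction for intersections of three quadrics, and the Fano variety of planes for the cubic fivefold after \cite{FT}), equipped with an Abel--Jacobi-type morphism $\phi\colon F\to J^{m+1}(X)$. The associated incidence correspondence yields a cycle $\Gamma\in\CH^{m+1}(X\times J^{m+1}(X))_\Q$ which induces, on cohomology, an isomorphism $H^1(J^{m+1}(X),\Q)\xrightarrow{\sim}H^{2m+1}(X,\Q)(m)$ coming precisely from the fact that $H^{2m+1}(X,\Q)$ is a polarised Hodge structure of level one whose associated abelian variety is $J^{m+1}(X)$.

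The second step is to upgrade this cohomological isomorphism to a motivic one by the Kimura--O'Sullivan criterion: both motives are finite-dimensional. The factor $h^1(J^{m+1}(X))$ is a direct summand of the motive of an abelian variety, and is therefore oddly finite-dimensional by \cite{kim}. The motive $h^{2m+1}(X)$ is finite-dimensional since $h(X)$ is so in each of the four cases under consideration (Fano threefolds and intersections of two quadrics admit explicit motivic decompositions in terms of curves and Tate twists as recalled in \S\ref{section:Chow level 1}; intersections of three quadrics are handled via the Prym construction of \cite{Beau}; cubic fivefolds are treated in \cite{FT}). Applying the criterion to the morphism of motives induced by $\Gamma$, with the appropriate Tate twist, yields
\[
    h^{2m+1}(X)\simeq h^1(J^{m+1}(X))\otimes\mathbf{L}^m\qquad\text{in }\ch(k)_\Q.
\]

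To transfer this to $\ch_\et(k)$, I would use on the one hand the equivalence $\operatorname{DM}_\et(k,\Q)\simeq\operatorname{DM}(k,\Q)$ of \cite{CD16}, which immediately lifts the statement to $\ch_\et(k)_\Q$. On the other hand, the conservative family of functors $\rho_\Q$ and $\rho_\ell$ reduces the integral assertion to a check in $\operatorname{DM}_\et(k,\Z/\ell)\simeq D(k_\et,\Z/\ell)$, where the corresponding statement becomes an identification of étale cohomology groups with $\Z/\ell$-coefficients; this follows from the Artin comparison theorem together with the isomorphism $H^{2m+1}(X,\Z/\ell)\simeq H^1(J^{m+1}(X),\Z/\ell)(-m)$ induced by the level-one Hodge structure. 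The hard part will be verifying that a candidate morphism exists integrally and realises the isomorphism modulo every prime $\ell$; here the torsion comparison $\CH^i_\et(X)_{\tors}\simeq J^{2i-1}(X)_{\tors}$ established in \S\ref{section:Chow etale level 1}, together with the Bloch--Ogus spectral sequence controlling the unramified cohomology in the relevant degrees, is crucial to ensure compatibility of the torsion parts on both sides before concluding by the conservation of $(\rho_\Q,\rho_\ell)$.
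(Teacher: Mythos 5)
Your overall strategy coincides with the paper's: in each case one invokes the known results to get finite-dimensionality of $h(X)$ in the sense of Kimura, uses the level-one Hodge structure to produce a homological isomorphism $h^{2m+1}(X)_\Q\simeq h^1(J^{m+1}(X))_\Q\otimes\mathbf{L}^m$, lifts it to $\ch(k)_\Q$ by the Kimura--O'Sullivan nilpotence argument, and then passes to integral étale motives. The rational part of your argument is sound and matches the paper step for step.

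The gap is in the integral step. To run the conservativity argument with the family $(\rho_\Q,\rho_\ell)$ you must first exhibit an actual morphism $h^1_\et(J^{m+1}(X))\otimes\mathbf{L}^m\to h_\et^{2m+1}(X)$ in $\ch_\et(k)$ with \emph{integral} coefficients; conservativity only detects whether a given integral morphism is an isomorphism, it cannot manufacture one. What the Kimura argument hands you is a class in $\CH^{m+1}(X\times J^{m+1}(X))_\Q$, i.e.\ a $\Q$-linear combination of cycles, and the ingredients you propose to bridge this (the torsion comparison $\CH^i_\et(X)_{\tors}\simeq J^{2i-1}(X)_{\tors}$ and the Bloch--Ogus spectral sequence) live on $X$ itself, not on the product $X\times J^{m+1}(X)$ where the correspondence must be defined; they do not produce an integral representative of the rational correspondence. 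This is exactly the point the paper closes by citing \cite[Proposition 3.1.7]{RoSo}: since the rational isomorphism is induced by an algebraic cycle, its image is generated by an integral étale class, so the projectors and the isomorphism lift to $\ch_\et(k)$. Once such an integral lift is in hand, your mod-$\ell$ verification via $\operatorname{DM}_\et(k,\Z/\ell)\simeq D(k_\et,\Z/\ell)$ and the comparison $\operatorname{H}^{2m+1}(X,\Z/\ell)\simeq \operatorname{H}^1(J^{m+1}(X),\Z/\ell)(-m)$ (valid because $\operatorname{H}^{2m+1}(X,\Z)$ is torsion-free) is a perfectly good way to conclude, and is a legitimate alternative to the paper's direct appeal to the lifting proposition. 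You should therefore either quote a lifting statement of this type or give an independent construction of the integral étale correspondence; as written, the "hard part" you flag is precisely the part that remains unproved.
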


\begin{proof}
The statement compiles several known results, each corresponding to one of the cases for $X$. All arguments and properties involved (notably finite dimensionality and motivic decompositions) can be found in the following references:
\begin{enumerate}
    \item For complete intersections of two quadrics, see \cite[Theorem 4.8]{reid} and \cite[Theorem 2.1]{lat}.
    \item For complete intersections of three quadrics, see \cite[Theorem 1]{NS} and \cite[Theorem 4.3]{boua}.
    \item For cubic fivefolds in $\Pro^6$, the result follows from the criterion in \cite[Theorem 4]{via}, together with the fact that $\CH_\et^i(X)_\Q \simeq \Q$ for all $0\leq i \leq 5$.
    \item For Fano threefolds, the statement follows from \cite[Theorems 8 and 15]{GG} concerning the representability of zero-cycles.
\end{enumerate}

If $X$ is one of the above cases, then the motive $h(X)\in \ch(\C)_\Q$ is finite dimensional in the sense of Kimura. By \cite[Corollary 5.6.4]{MNP} and the non-existence of phantom motives\footnote{i.e., non-trivial Chow motives which are homologically trivial.}, we obtain an integral étale Chow-Künneth decomposition of $h_\et(X)$ with
\[
    h_\et^{2n-1}(X):=(X,p), \qquad p=\Delta_X-\sum_{i=0}^{2n-2} p_{2i}^\et(X).
\]
This reflects the fact that there are no transcendental cohomology classes in codimensions other than $2m+1$. In each case, the Hodge decomposition
\[
    \operatorname{H}^{2m+1}_B(X,\C)=\bigoplus_{i+j=2m+1}\operatorname{H}^{i,j}(X)
\]
has nonzero components only in $\operatorname{H}^{m+1,m}(X)$ and $\operatorname{H}^{m,m+1}(X)$. Moreover, the intermediate Jacobians $J^{2m+1}(X)$ are abelian varieties. Thus, there exists a Chow-Künneth decomposition of the motive 
\[
    h^1_\et(J^{2m+1}(X))\otimes \mathbf{L}^m \to h_\et(X),
\]
so in particular we obtain a morphism $h^1_\et(J^{2m+1}(X))\otimes \mathbf{L}^m \to h_\et(X)$. The realization of the motive $h^1_\et(J^{2m+1}(X))\otimes \mathbf{L}^m$ corresponds to the first homology group of $J^{2m+1}(X)$. Indeed, there is an isomorphism
\[
    \operatorname{H}^B_1(J^{2m+1}(X),\Z)(-m)\simeq \operatorname{H}^{2m+1}_B(X,\Z).
\]

Explicitly, the $1$-motive associated to $J^{2m+1}(X)$ can be written as
\[
    h^1_\et(J^{2m+1}(X))\otimes \mathbf{L}^m=(J^{2m+1}(X),q,-m),
\]
where $q \in \operatorname{End}(h_\et(J^{2m+1}(X)))$ is the idempotent defining $h^1_\et(J^{2m+1}(X))\otimes \mathbf{L}^m$ as a direct factor of $h_\et(J^{2m+1}(X))\otimes \mathbf{L}^m$. At the level of homological motives there is an isomorphism
\[
    h^{2m+1}(X)_\Q\simeq h^1(J^{2m+1}(X))_\Q\otimes \mathbf{L}^m \in \ch_{\operatorname{hom}}(k)_\Q.
\]
Since these motives are finite dimensional in the sense of Kimura, this isomorphism lifts to the category of Chow motives:
\[
    h^{2m+1}(X)_\Q\simeq h^1(J^{2m+1}(X))_\Q\otimes \mathbf{L}^m \in \ch(k)_\Q.
\]
As this is induced by an algebraic cycle, its integral image is generated by an integral étale class, by \cite[Proposition 3.1.7]{RoSo}. Therefore, the isomorphism lifts further to an integral isomorphism of étale Chow motives $h^{2m+1}_\et(X)\simeq h^1_\et(J^{2m+1}(X)) \otimes \mathbf{L}^m \in \ch_\et(k)$.
\end{proof}

\begin{remark}
We remark that both $J^{2i+1}_{\alg}(X)$ and $J^{2i+1}_H(X)$ are abelian varieties, and that the cup product induces a bilinear Riemann form on $J^{2i+1}_H(X)$.
\end{remark}

From now on we restrict to varieties $X \in \operatorname{SmProj}_\C$ for which the standard Lefschetz conjecture $B(X)$ holds. Let us recall the statement. Let $Y\subset X$ be a smooth hyperplane section. Consider its cohomology class $c_\Q^1([Y])$, and define the Lefschetz operator
\[
    L:\operatorname{H}^i_B(X,\Q)\to \operatorname{H}^{i+2}_B(X,\Q), \qquad 
    \alpha \mapsto \alpha \cup c_\Q^1([Y]).
\]
Its $r$-fold iterate is denoted $L^r$. By the hard Lefschetz theorem we have an isomorphism
\[
    L^j:\operatorname{H}^{d-j}_B(X,\Q)\xrightarrow{\simeq}\operatorname{H}^{d+j}_B(X,\Q),
\]
whose inverse we denote by $\Lambda^j$. Clearly, $\Lambda^r$ can be written as $\Lambda\circ \cdots \circ \Lambda$ ($r$ times).

\begin{conj}[Conjecture $B(X)$]
The topological correspondence $\Lambda$ is algebraic, i.e.\ there exists $Z \in \CH^{d-1}(X\times X)_\Q$ such that 
\[
    c_\Q^{d-1}(Z)\cup (-)=\Lambda.
\]   
\end{conj}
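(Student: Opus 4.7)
The final statement is Grothendieck's standard Lefschetz conjecture $B(X)$, which remains open in full generality. A complete proof is not a realistic plan, so the strategy is to verify it for the specific classes of varieties used in the paper, where the cohomology is explicit enough. In what follows, I outline the general reduction and then indicate how each relevant family is handled.

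The guiding principle is the Lefschetz decomposition: the hard Lefschetz theorem yields $\operatorname{H}^k_B(X,\Q) = \bigoplus_{a\geq 0} L^a P^{k-2a}$, where $P^j := \ker\!\bigl(L^{d-j+1}:\operatorname{H}^j_B(X,\Q)\to \operatorname{H}^{2d-j+2}_B(X,\Q)\bigr)$ denotes primitive cohomology. On this decomposition $\Lambda$ is given by explicit rational formulas involving the projectors onto the primitive pieces. Consequently, algebraicity of $\Lambda$ is reduced to algebraicity of the primitive projectors, or equivalently, to the existence of algebraic correspondences realizing the inverses $L^{-j}$ of the hard Lefschetz isomorphisms $L^j:\operatorname{H}^{d-j}_B(X,\Q)\xrightarrow{\sim}\operatorname{H}^{d+j}_B(X,\Q)$ for each $j\geq 1$.

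For smooth complete intersections $X\subset\mathbb{P}^N$ (cubic three- and fivefolds, quartic threefolds, complete intersections of type $(2,3)$, and intersections of two or three quadrics), the weak Lefschetz theorem forces $\operatorname{H}^i_B(X,\Q)$ outside the middle to be spanned by powers of the hyperplane class $h$. Hence for each $j\geq 1$ the map $L^{-j}$ on non-middle degrees is determined by its action on the $h^a$, and one realizes its inverse by an explicit rational combination of products of hyperplane classes, yielding an algebraic correspondence on $X\times X$. On the middle cohomology $\Lambda$ coincides with the identity, realized by $\Delta_X$. For the intermediate Jacobians $J^{m+1}(X)$ appearing in the motivic decompositions, $B(X)$ is Lieberman's theorem on abelian varieties. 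The property $B$ is furthermore inherited by motivic direct summands and by products of varieties satisfying it, which covers the remaining cases (smooth double covers via their motivic decomposition, Fano threefolds via their representability and classification).

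The main obstacle is, of course, the general conjecture itself: $B(X)$ is unknown for arbitrary smooth projective varieties of dimension $\geq 4$, and there is no uniform technique available for producing algebraic inverses to hard Lefschetz beyond the classical families (curves, surfaces, abelian varieties, flag-like varieties, and derived constructions from these). Fortunately, the present paper only requires $B(X)$ for the special cases listed above, all of which lie within reach via the strategy sketched here; this is the precise reason the authors are content to record $B(X)$ as a working hypothesis rather than attempt it in greater generality.
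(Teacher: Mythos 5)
The statement you were asked about is a \emph{conjecture}, not a theorem: the paper does not prove $B(X)$ in general, and neither could you. You correctly recognise this, and your plan --- verify $B(X)$ only for the families actually used --- matches the paper's strategy, which consists of (i) a list of known cases with references (Lieberman, Kleiman, Arapura), and (ii) Lemma \ref{bXcovering}, which establishes $B(X)$ for smooth double covers of $\mathbb{P}^n$ by realising $X$ as an ample hypersurface in the weighted projective space $\mathbb{P}(1^{n+1},d)$ and invoking the weak Lefschetz theorem for rational cohomology of toric varieties (Batyrev--Cox), exactly as in Kleiman's argument for hypersurfaces in $\mathbb{P}^N$.

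Two points in your sketch need correction. First, your claim that ``on the middle cohomology $\Lambda$ coincides with the identity, realized by $\Delta_X$'' is not right: $\Lambda$ lowers cohomological degree by $2$, so it does not act on $\operatorname{H}^d_B(X,\Q)$ as the identity (or as any endomorphism of that single graded piece). The actual reduction for complete intersections, following Kleiman, is that weak Lefschetz identifies all non-primitive cohomology with cohomology pulled back from the ambient space, where $\Lambda$ is algebraic for trivial reasons, and the remaining primitive middle part is handled by the Lefschetz decomposition formulas; this is close to what you wrote elsewhere, but the middle-degree step must be phrased in terms of the primitive decomposition, not the diagonal. Second, and more seriously, your proposal to obtain $B(X)$ for smooth double covers ``via their motivic decomposition'' is circular in the logic of this paper: the motivic decomposition of Theorem \ref{theodecomp} (and hence the decomposition \eqref{decompetale} for quartic double fivefolds) takes $B(X)$ as a \emph{hypothesis}, and the paper must first prove Lemma \ref{bXcovering} independently --- via the weighted-projective-space embedding --- before any motivic consequence is available. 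If you want to cover the double-cover case you need that direct argument, not an appeal to a decomposition that presupposes the conclusion.
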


The conjecture $B(X)$ is known in the following cases (see e.g. \cite{Lie68,Kle94, Ara06}):
\begin{enumerate}
    \item Curves and surfaces (consequences of the Lefschetz $(1,1)$ theorem).
    \item Abelian varieties.
    \item Varieties $X$ for which $\operatorname{H}^\ast(X)\simeq \operatorname{CH}^\ast(X)$.
    \item Complete intersections in products of projective spaces.
    \item Grassmannians.
    \item Uniruled threefolds and unirational fourfolds.
\end{enumerate}

\begin{lemma}\label{bXcovering}
The conjecture $B(X)$ holds when $X \to \mathbb{P}^n$ is a smooth double cover ramified over a hypersurface $B\subset \mathbb{P}^n$ of degree $2d$, with $d \in \N^{\geq 1}$.
\end{lemma}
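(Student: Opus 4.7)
The plan is to leverage the covering involution $\iota\colon X\to X$ to decompose cohomology algebraically into $\iota$-eigenspaces, reduce $B(X)$ on the invariant part to $B(\mathbb{P}^n)$ (which is trivial), and show that $\Lambda$ vanishes on the anti-invariant part for purely cohomological reasons.

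First, I would introduce the algebraic projectors $p^{\pm}:=\tfrac{1}{2}(\Delta_X\pm\Gamma_\iota)\in\CH^n(X\times X)_\Q$, which split $\operatorname{H}^\ast(X,\Q)=\operatorname{H}^\ast(X,\Q)^{+}\oplus \operatorname{H}^\ast(X,\Q)^{-}$ into $\iota^\ast$-eigenspaces. Since $f_\ast f^\ast=2\cdot\operatorname{id}$, pullback gives an isomorphism $f^\ast\colon \operatorname{H}^\ast(\mathbb{P}^n,\Q)\xrightarrow{\sim}\operatorname{H}^\ast(X,\Q)^{+}$. Choosing the polarization on $X$ to be $f^\ast\mathcal{O}_{\mathbb{P}^n}(1)$, this isomorphism intertwines the Lefschetz operator on $X$ with that on $\mathbb{P}^n$, and hence also the inverse operators $\Lambda$.

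The core of the argument will be a cohomological concentration statement: $\operatorname{H}^i(X,\Q)^{-}=0$ for $i\neq n$. To prove it I would use the decomposition $f_\ast\Q_X=\Q_{\mathbb{P}^n}\oplus j_!\mathcal{L}$, where $j\colon U:=\mathbb{P}^n\setminus B\hookrightarrow \mathbb{P}^n$ and $\mathcal{L}$ is the nontrivial rank-one $\Q$-local system on $U$ associated with the étale double cover over $U$ (the extension on $B$ is by zero, since $\iota$ acts trivially on each reduced fibre over $B$). This yields $\operatorname{H}^i(X,\Q)^{-}\simeq \operatorname{H}^i_c(U,\mathcal{L})$. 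Since $B$ is an ample divisor, the complement $U$ is smooth affine, so Artin vanishing gives $\operatorname{H}^{>n}(U,\mathcal{L})=0$, and Poincar\'e--Verdier duality, together with the self-duality of $\mathcal{L}$, yields $\operatorname{H}^i_c(U,\mathcal{L})=0$ for $i<n$. The vanishing for $i>n$ then follows from the $\iota$-equivariant Poincar\'e duality on $X$, since $\iota$ is holomorphic and hence preserves orientation.

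With this in hand, I would pick any algebraic representative $Z_{\mathbb{P}^n}\in\CH^{n-1}(\mathbb{P}^n\times\mathbb{P}^n)_\Q$ of the inverse Lefschetz operator on $\mathbb{P}^n$, which exists trivially since $\CH^\ast(\mathbb{P}^n)_\Q\simeq\operatorname{H}^\ast(\mathbb{P}^n,\Q)$, and set
\[
    Z_X:=\tfrac{1}{2}\bigl({}^t\Gamma_f\circ Z_{\mathbb{P}^n}\circ\Gamma_f\bigr)\in\CH^{n-1}(X\times X)_\Q.
\]
As a cohomological correspondence, $Z_X$ acts by $\tfrac{1}{2}f^\ast\Lambda_{\mathbb{P}^n}f_\ast$, which coincides with $\Lambda_X$ on $\operatorname{H}^\ast(X,\Q)^{+}$ via the isomorphism $f^\ast$ and vanishes on $\operatorname{H}^\ast(X,\Q)^{-}$ because $f_\ast$ does. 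This matches $\Lambda_X$ on the anti-invariant part as well: by the concentration step, this part sits in degree $n$, and since $L$ preserves the eigenspaces the map $L\colon \operatorname{H}^n(X,\Q)^{-}\to \operatorname{H}^{n+2}(X,\Q)^{-}=0$ vanishes, placing $\operatorname{H}^n(X,\Q)^{-}$ inside the primitive cohomology $P^n(X)$, on which $\Lambda$ vanishes by definition. Therefore $Z_X$ realizes $\Lambda_X$ globally, proving $B(X)$. The main obstacle will be the cohomological concentration step; the rest is formal manipulation of algebraic idempotents and composition of correspondences.
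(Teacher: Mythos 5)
Your argument is correct, but it takes a genuinely different route from the paper's. The paper realizes $X$ as an ample hypersurface in the weighted projective space $\mathbb{P}(1^{n+1},d)$ and invokes Kleiman's argument for ample hypersurfaces, with the required weak Lefschetz statement supplied by the Batyrev--Cox theorem for toric varieties. You instead exploit the covering involution $\iota$: the algebraic idempotents $\tfrac12(\Delta_X\pm\Gamma_\iota)$ split the cohomology, the invariant part is identified with $\operatorname{H}^\ast(\mathbb{P}^n,\Q)$ where $\Lambda$ is trivially algebraic and can be transported through ${}^t\Gamma_f\circ(-)\circ\Gamma_f$, and the anti-invariant part is shown to be concentrated in middle degree via $f_\ast\Q_X\simeq\Q_{\mathbb{P}^n}\oplus j_!\mathcal{L}$, Artin vanishing and duality --- whence it is primitive and $\Lambda$ kills it. The two proofs share the same skeleton (Kleiman's hypersurface-section trick: $\Lambda$ is algebraic on the part coming from the ambient or base space and vanishes on the residual primitive middle cohomology), but your concentration lemma $\operatorname{H}^i(X,\Q)^-=0$ for $i\neq n$ replaces the toric weak Lefschetz input, makes the proof self-contained and arguably more elementary, and produces an explicit correspondence $\tfrac12\,{}^t\Gamma_f\circ Z_{\mathbb{P}^n}\circ\Gamma_f$ of the same shape as the projectors the paper later uses for the Chow--K\"unneth decomposition of $X$. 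Two small points are left implicit: you take the Lefschetz operator with respect to $f^\ast\mathcal{O}_{\mathbb{P}^n}(1)$ rather than an arbitrary hyperplane class, so you are using Kleiman's theorem that $B(X)$ is independent of the choice of polarization; and the identification of $\Lambda_X$ restricted to $\operatorname{H}^\ast(X,\Q)^+$ with $f^\ast\circ\Lambda_{\mathbb{P}^n}\circ(f^\ast)^{-1}$ uses that $\iota^\ast$ commutes with $L_X$, hence with $\Lambda_X$, so that $\Lambda_X$ preserves the eigenspace decomposition. Both are standard and neither is a gap.
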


\begin{proof}
The argument is analogous to the proof that conjecture $B(X)$ holds when $X$ is an ample hypersurface in projective space (see, e.g., \cite[Proposition 4.3]{Kle94}). As explained in  
\cite[Proposition 1.4.7.(vii)]{Kle68}, the key point is that the rational version of the weak Lefschetz theorem allows one to construct an algebraic class representing $\Lambda_X$ from the corresponding class in the ambient space. Since $X$ can be realized as an ample hypersurface in the weighted projective space $\mathbb{P}(1^{n+1},d)$, the argument carries over verbatim by invoking the weak Lefschetz theorem for rational cohomology of toric varieties due to Batyrev and Cox \cite{BC94} (cf.\ the recent preprint \cite{Rau25}, where the pullback map is described at the level of integral cohomology).
\end{proof}

 For $i \in \N^{\geq 1}$ with $i\neq 1$ and $d=\operatorname{dim}(X)$, consider the following coniveau-like filtration
\[
    \operatorname{N}^i_\et \operatorname{H}_B^{2i+1}(X,\C) = 
    \sum_{\substack{C \in \operatorname{SmProj}_{\C} \\ \Gamma \in \CH_\et^{i+1}(C\times X)}}
    \operatorname{Im}\!\left\{\Gamma_* :\operatorname{H}^1(C,\C)\to \operatorname{H}^{2i+1}(X,\C)\right\},
\]
where $C$ runs over all smooth projective curves. This defines a sub-Hodge structure of $\operatorname{H}^{2i+1}(X,\C)$ of coniveau $i$. In addition, we consider the maximal sub-Hodge structure of $\operatorname{H}^{i+1,i}(X)\oplus \operatorname{H}^{i,i+1}(X)$, which we denote by $V$. Each of these Hodge structures has an associated intermediate Jacobian, denoted $J^{2i+1}_{\alg}(X)$ and $J^{2i+1}_H(X)$, respectively. In these cases we quotient by the lattices 
\[
    \operatorname{N}^i\operatorname{H}^{2i+1}(X,\Z):=\operatorname{N}^i\operatorname{H}^{2i+1}(X,\C)\cap \operatorname{H}^{2i+1}_B(X,\Z), 
    \qquad 
    V_\Z:=V \cap \operatorname{H}^{2i+1}_B(X,\Z),
\]
in the corresponding definitions. Both constructions yield abelian varieties, since their tangent spaces are contained in $\operatorname{H}^{i+1,i}(X)\oplus \operatorname{H}^{i,i+1}(X)$. Conjecturally, they coincide by the generalized Hodge conjecture in weight $2i+1$ and level $1$.

\begin{theorem}\label{theodecomp}
Let $X$ be a smooth projective variety of dimension $d$ verifying the standard Lefschetz conjecture $B(X)$. Then for all $2\leq i\leq d-1$, the motive $h_1(J_{\alg}^{i+1}(X))\otimes \mathbf{L}^i$ is a direct factor of the motive $h^+(X)$, where $h^+(X)$ is associated with the projector 
\[
    p^+(X):=\Delta-p_0(X)-p_1(X)-p_{2d-1}(X)-p_{2d}(X).
\]
Moreover, the generalized Hodge conjecture holds in weight $2i+1$ and level $1$ for $X$ if and only if $h_1(J_H^{i+1}(X))\otimes \mathbf{L}^i$ is a submotive of $h_\et(X)$. 
\end{theorem}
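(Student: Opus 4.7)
The plan is to realize $J^{i+1}_{\alg}(X)$ as a direct summand of $h^+(X)$ by constructing an explicit correspondence from a smooth projective curve into $X$, and then splitting the resulting morphism motivically using the algebraicity of the Künneth projectors that follows from $B(X)$; the second assertion will then follow by applying the same construction to the maximal level-$1$ sub-Hodge structure. To begin, I would exploit the very definition of $\operatorname{N}^i_\et \operatorname{H}^{2i+1}(X,\Q)$ as a finite sum of images of pushforwards $(\Gamma_j)_*: \operatorname{H}^1(C_j,\Q) \to \operatorname{H}^{2i+1}(X,\Q)$, with each $C_j$ a smooth projective curve and $\Gamma_j \in \CH^{i+1}_\et(C_j \times X)$. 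Setting $C = \sqcup_j C_j$ and $\Gamma = \sum_j \Gamma_j$, one obtains a correspondence inducing a surjection $\Gamma_*: \operatorname{H}^1(C,\Q) \twoheadrightarrow \operatorname{N}^i_\et \operatorname{H}^{2i+1}(X,\Q)$ and, at the level of abelian varieties, a surjective morphism $J(C) \twoheadrightarrow J^{i+1}_{\alg}(X)$. By Poincaré's complete reducibility theorem, $J^{i+1}_{\alg}(X)$ appears up to isogeny as a direct factor of $J(C)$, so composing with the corresponding projector produces a morphism
\[
\alpha: h_1(J^{i+1}_{\alg}(X)) \otimes \mathbf{L}^i \longrightarrow h_\et(X)
\]
whose image on cohomology is contained in the $(2i+1)$-st Künneth component and, in the stated range of $i$, factors through $h^+(X)$.

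By Kleiman's theorem, $B(X)$ implies the Künneth standard conjecture $C(X)$, so every projector $p_j(X)$ is algebraic and the component $(X, p_{2i+1}(X), 0)$ is a well-defined direct summand of $h^+(X)$. The key step is then to construct a left inverse of $\alpha$, which I would obtain by a Poincaré-duality argument: the algebraicity of the inverse Lefschetz operator $\Lambda$ provided by $B(X)$ allows one to express the Hodge-theoretic dual of $\alpha$ as an algebraic correspondence ${}^t\alpha: h^+(X) \to h_1(J^{i+1}_{\alg}(X)) \otimes \mathbf{L}^i$, and the composition ${}^t\alpha \circ \alpha$ is an endomorphism of $h_1(J^{i+1}_{\alg}(X)) \otimes \mathbf{L}^i$ realising, via the principal polarization of $J^{i+1}_{\alg}(X)$, an isogeny of abelian varieties. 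Since homomorphisms between $h_1$'s of abelian varieties correspond (up to isogeny) to homomorphisms of the underlying abelian varieties, this endomorphism becomes invertible after inverting the relevant integer, yielding the desired splitting and exhibiting $h_1(J^{i+1}_{\alg}(X)) \otimes \mathbf{L}^i$ as a direct factor of $h^+(X)$.

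For the second assertion, observe that $\operatorname{N}^i_\et \operatorname{H}^{2i+1}(X,\Q)$ is automatically of level at most $1$ (its Hodge types lie in $\{(i+1,i),(i,i+1)\}$), so the inclusion $\operatorname{N}^i_\et \operatorname{H}^{2i+1}(X,\Q) \subseteq V_\Q$ always holds, and the generalized Hodge conjecture in weight $2i+1$ and level $1$ is equivalent to the reverse inclusion $V_\Q \subseteq \operatorname{N}^i_\et \operatorname{H}^{2i+1}(X,\Q)$, that is, to the isogeny $J^{i+1}_H(X) \sim J^{i+1}_{\alg}(X)$. Under this assumption, the first part directly exhibits $h_1(J^{i+1}_H(X)) \otimes \mathbf{L}^i$ as a submotive of $h_\et(X)$; conversely, applying the Hodge realization functor to any such submotive produces a correspondence from a curve whose image contains $V_\Q$, forcing $V_\Q \subseteq \operatorname{N}^i_\et \operatorname{H}^{2i+1}(X,\Q)$. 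The main obstacle will be to carry out the splitting at the integral level of étale Chow motives rather than merely rationally; this appears to require either a finite-dimensionality argument in the spirit of Kimura, or the type of integral-to-étale lifting of algebraic classes used elsewhere in the paper (e.g.\ \cite[Proposition 3.1.7]{RoSo}).
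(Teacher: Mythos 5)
Your proposal follows essentially the same route as the paper's proof: both use the defining correspondences $\Gamma \in \CH^{i+1}_{\et}(C\times X)$ from a (possibly disconnected) curve surjecting onto $\operatorname{N}^i\operatorname{H}^{2i+1}$, split $J^{i+1}_{\alg}(X)$ off $J(C)$ by Poincaré complete reducibility, and use the algebraicity of $\Lambda$ granted by $B(X)$ together with the polarization to build the projector (the paper writes it explicitly as $\pi_{2i+1,i}=\Gamma_*\circ\gamma\circ\Gamma_*^t\circ L^{2i+1}$, citing Vial's Theorems 2.8 and 2.9, which is exactly your ${}^t\alpha\circ\alpha$ argument), and both handle the converse of the second assertion by realizing the submotive and composing with a curve in $J^{i+1}_H(X)$ having surjective $\operatorname{H}_1$. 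Your closing concern about the integral étale lifting is resolved in the paper exactly as you anticipate, by invoking the Rosenschon--Srinivas machinery to lift the projectors to $\ch_\et(k)$.
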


\begin{proof}
The fact that the motives $h_1(J_{\alg}^{i+1}(X))\otimes \mathbf{L}^i$ are subobjects of $h(X)\in \ch(k)_\Q$ follows from \cite{via}; we briefly recall the construction of the relevant projectors.

Consider $\operatorname{N}^i\operatorname{H}^{2i+1}(X,\C)$ and $V$ as the sub-Hodge structure of $\operatorname{H}^{2i+1}(X,\C)$ described above. By definition, there exists a curve $C=\amalg_j C_j$ (not necessarily connected) such that for some $\Gamma \in \CH^{i+1}_\et(C \times X)$ we obtain a surjection 
\[
    \Gamma_*: \operatorname{H}^1(C,\Z) \twoheadrightarrow \operatorname{N}^i\operatorname{H}^{2i+1}(X,\Z).
\]
Thus the induced map of abelian varieties $\Gamma_* : \operatorname{Alb}_C(k) \twoheadrightarrow J^{2i+1}_{\alg}(X)$ is also surjective. By the semisimplicity of the category of abelian varieties up to isogeny, there exists $\alpha \in \operatorname{Hom}_{\operatorname{AV}}(J^{2i+1}_{\alg}(X),\operatorname{Alb}_C(k))$ such that 
\[
    \Gamma_* \circ \alpha = m\cdot \operatorname{id}_{J^{2i+1}_{\alg}(X)}
\]
for some $m\in \Z$. Furthermore, we have a homomorphism with finite kernel $\Gamma^t_* \circ \Lambda: J^i_a(X)\hookrightarrow \operatorname{Pic}^0(C)$.

Now consider the dual morphism $\Gamma^t_*:J^{2i+1}_{\alg}(X)^\vee \to \operatorname{Pic}^0(C)$. For such a curve, let $\Theta$ be its theta divisor and define $\Gamma_{\lambda}:=\alpha \circ \lambda \circ \alpha^{\vee}$ for a polarization $\lambda:J^{2i+1}_{\alg}(X)^\vee  \to J^{2i+1}_{\alg}(X)$. For $\lambda$ such that $\Gamma_\lambda= \alpha \circ \Lambda^{-1}\circ \alpha^{\vee}$, set $\gamma:=\Gamma_\lambda$. Then define the projector
\[
    \pi_{2i+1,i}:= \Gamma_* \circ \gamma \circ \Gamma^t_* \circ L^{2i+1} \in \CH^d(X\times X)_\Q.
\]
By \cite[Theorem 2.8]{via} there is an isomorphism of motives 
\[
     \pi_{2i+1,i} \circ L^{2i+1} \circ \pi_{2(d-i)-1,d-i-1}: (X,\pi_{2i,i},0) \to (X,\pi_{2(d-i)-1,d-i-1},2i+1-d),
\]
while by \cite[Theorem 2.9]{via}, for such projectors $\pi_{2i+1,i+1}$ we obtain an isomorphism of motives 
\[
    (X,\pi_{2i+1,i+1})\simeq h_1(J^{2i+1}_{\alg}(X))(i).
\]
By the theory of \cite{RS}, these projectors and morphisms lift to $\ch_\et(k)$, so $h_1(J^{2i+1}_{\alg}(X))(i)$ is indeed a subobject of $h_\et(X)$. 

For the last statement, assume the generalized Hodge conjecture holds in level $1$ and weight $2i+1$. Then $J_{\alg}^{i+1}(X)\simeq J_H^{i+1}(X)$, hence $h_1(J^{2i+1}_H(X))(i)$ is a direct factor of $h_\et(X)$. Conversely, suppose that $h_1(J_H^{i+1}(X))(i)$ is a direct factor of $h(X)$. By the assumption on the Lefschetz conjecture, $h_1(J_H^{d-i}(X))(d-i)$ is also a direct factor of $h(X)$. Considering the injection of abelian varieties $i_J: J_{\alg}^{i+1}(X)\hookrightarrow J^{2i+1}_H(X)$, we obtain a map of motives 
\[
    i^*_J(i): h_1(J^{2i+1}_H(X))(i) \to h_1(J^{2i+1}_{\alg}(X))(i).
\]
Here $h_1(J_H^{i+1}(X))(i)=(J_H^{i+1}(X),p,i)$, where $p \in \operatorname{End}_{\ch(k)}(h(J_H^i(X)))$ is the associated projector. The realization of this $1$-motive gives
\[
    \operatorname{H}^*_B(h_1(J_H^{i+1}(X))(i),\Q) \simeq \operatorname{H}^1_B(J_H^{i+1}(X),\Q)(i) \simeq V_\Q^{\vee}(i),
\]
with both isomorphisms induced by algebraic cycles, due to the Lefschetz-type assumptions. Considering the injection $h_1(J_H^{i+1}(X))(i)\hookrightarrow h(X)$, we obtain an injection of Hodge classes $V_\Q\hookrightarrow \operatorname{H}^{i+1}_B(X,\Q)$. By semisimplicity, $V_\Q$ is a direct factor of $\operatorname{H}^{i+1}_B(X,\Q)$. By the hyperplane Lefschetz theorem there exists a curve $C\hookrightarrow J_H^{i+1}(X)$ such that 
\[
    h^i:\operatorname{H}_1^B(C,\Q(1))\xrightarrow{\simeq} \operatorname{H}_1^B(J_H^{i+1}, \Q(i)),
\]
yielding the commutative diagram
\[
\begin{tikzcd}
\operatorname{H}_1(C,\Q(1)) \arrow{dr}{i\circ h^i}[swap]{\simeq} \arrow{d} & \\
\operatorname{H}_1(J_H^{i+1}(X),\Q(i)) \arrow[r,hook] & \operatorname{H}^{i+1}_B(X,\Q(i)).
\end{tikzcd}
\]
This completes the proof.
\end{proof}

\begin{remark}
The equivalence in \cite[Theorem 4]{via}, in particular condition (3), is equivalent to asking whether the étale Deligne-Beilinson cycle class map 
\[
    c_{\operatorname{DB},\et}^i:\CH_\et^i(X)\to \operatorname{H}^{2i}_D(X,\Z(i))
\]
is injective for all $i$. This is a direct consequence of \cite[Theorem 5.1]{RS}. 
\end{remark}

\section{Further Fano manifolds with Hodge level one}\label{section:Other Fanos}

As recalled in the Introduction, higher-dimensional Fano manifolds with large index have been classified (see e.g. \cite{IP99}). Among Fano manifolds with Hodge level one, an important family is given by Gushel-Mukai varieties, whose Hodge theory was studied in \cite{DK18,DK20}, and which we briefly recall for completeness. We will also observe that del Pezzo fivefolds of Picard rank $1$ and degree $2$ (i.e., smooth double covers of projective space branched along a quartic hypersurface) have Hodge level one, and we will study their algebraic cycles.

\subsection{Gushel-Mukai varieties}
A Gushel-Mukai variety of dimension $n \in \{3,4,5,6\}$ over a field $k$ is a smooth projective variety of the form
\[
X = \mathrm{CGr}(2, V_5) \cap \mathbb{P}(W_{n+5}) \cap Q,
\]
where $V_5$ is a $5$-dimensional $k$-vector space, $\mathrm{CGr}(2, V_5) \subset \mathbb{P}(k \oplus \wedge^2 V_5)$ denotes the cone over the Grassmannian $\mathrm{Gr}(2, V_5)$, $W_{n+5} \subset k \oplus \wedge^2 V_5$ is a linear subspace of dimension $n+5$, and $Q \subset \mathbb{P}(k \oplus \wedge^2 V_5)$ is a quadric of dimension $9$.

Since our focus is on odd-dimensional varieties whose middle cohomology carries a Hodge structure of level one, it suffices to restrict attention to the cases of threefolds and fivefolds.

\begin{prop}
Let $X$ be a Gushel-Mukai threefold or fivefold over $\mathbb{C}$. Then
\[
\kappa^i:\CH^i(X) \ \xrightarrow{\simeq} \ \CH_\et^i(X) \quad \text{for all } i \geq 0.
\]
\end{prop}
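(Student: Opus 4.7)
The plan is to treat the two dimensions separately. For Gushel--Mukai threefolds the statement is an immediate consequence of the Fano threefold result proved at the start of \S\ref{section:Chow etale level 1}: any smooth Fano threefold satisfies the integral Hodge conjecture (by Grabowski) and has representable $\operatorname{CH}^2$, which is exactly what the argument there uses, so no further work is required in that case.

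For a Gushel--Mukai fivefold $X$, I would mimic the cubic fivefold theorem and proceed codimension by codimension. The cases $i=0,1$ are automatic, and $i=\dim(X)=5$ follows from Suslin rigidity (Proposition \ref{lemGe}). In codimension $2$, since Gushel--Mukai fivefolds are rational (see \cite{DK20}), birational invariance of unramified cohomology yields $\operatorname{H}^3_{\operatorname{nr}}(X,\Q/\Z(2))=0$, and the Kahn short exact sequence then gives $\operatorname{CH}^2(X)\simeq \operatorname{CH}^2_\et(X)$.

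The substantive cases are $i=3,4$. As in the cubic fivefold proof, the kernel and cokernel of $\kappa^i$ are controlled by hypercohomology groups of the form $\mathbb{H}^{\ast}_{\operatorname{Zar}}(X,\tau_{\geq 2i+2}R\pi_*\Z_X(i)_\et)$, which an elementary spectral sequence expresses in terms of the Zariski cohomology of the sheaves $\mathcal{H}^{q-1}_\et(\Q/\Z(i))$ for $q\geq 2i+2$. Rationality of $X$ forces the higher Zariski cohomology of these sheaves to vanish, producing the isomorphism in codimension $4$ and the surjectivity of $\kappa^3$. To describe $\operatorname{CH}^3_\et(X)_{\operatorname{hom}}$ completely, one invokes Theorem \ref{theodecomp}, which exhibits $h^1_\et(J^3(X))\otimes \mathbf{L}^2$ as a direct factor of $h_\et(X)$ provided that $X$ satisfies the standard Lefschetz conjecture $B(X)$ and its motive is finite dimensional in the sense of Kimura; combined with the classical description of $\operatorname{CH}^3(X)$, this identifies $\kappa^3$ as an isomorphism.

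The main obstacle is therefore verifying the hypotheses of Theorem \ref{theodecomp} for Gushel--Mukai fivefolds, namely Kimura finite dimensionality and $B(X)$, neither of which is formal from the Gushel--Mukai structure. One feasible route is to exhibit an explicit decomposition of the diagonal via the embedding $X\hookrightarrow \operatorname{CGr}(2,V_5)$ and argue directly with Bloch--Ogus, bypassing finite dimensionality altogether; another is to transfer these properties from a birational model of $X$ whose motive is already known to be finite dimensional.
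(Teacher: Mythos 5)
Your threefold case coincides with the paper's proof, which likewise reduces to the Fano threefold theorem at the start of \S\ref{section:Chow etale level 1}. For fivefolds the paper takes a different and much shorter route: it simply runs the argument already given for the five-dimensional intersection of three quadrics (the Chow-group computations of \S\ref{subsec:Chow 3 quadrics} together with the \cite{RS}-based computation of the \'etale Chow groups in \S\ref{section:Chow etale level 1}), noting only that $N=1$ in the decomposition of the diagonal of Theorem \ref{Teovoi} because a Gushel--Mukai fivefold is $\CH_0$-universally trivial. Your route, modelled instead on the cubic fivefold theorem, is viable in outline but has two problems as written.

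First, a structural confusion: if the kernel and cokernel of $\kappa^3$ both vanish, then $\kappa^3$ is an isomorphism and nothing further is required; invoking Theorem \ref{theodecomp} to ``describe $\CH^3_\et(X)_{\operatorname{hom}}$'' is redundant for the statement at hand. Consequently the ``main obstacle'' you end on --- verifying Kimura finite dimensionality and $B(X)$ for Gushel--Mukai fivefolds --- is a phantom obstacle, yet by making it load-bearing and then leaving it unresolved you have left the proof formally incomplete where it need not be. Second, the step that \emph{is} load-bearing is under-justified: the truncation should be $\tau_{\geq i+2}$ (not $\tau_{\geq 2i+2}$), and the $E_2$-terms one must kill are $\operatorname{H}^1(X,\mathcal{H}^4)$, $\operatorname{H}^2(X,\mathcal{H}^5)$ and the unramified groups $\operatorname{H}^4_{\operatorname{nr}}$, $\operatorname{H}^5_{\operatorname{nr}}$. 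Rationality does not do this by itself; what does is the decomposition of the diagonal with $N=1$, fed into Theorem \ref{tors}, Corollary \ref{zero1} and \cite[Proposition 3.3]{CTV}. That is exactly the paper's ``additional observation,'' so once you replace the appeal to Theorem \ref{theodecomp} by this diagonal argument, your proof closes and becomes essentially a reorganisation of the paper's.
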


\begin{proof}
When $\dim(X) = 3$, the variety is a Fano threefold, and the statement follows from the discussion in \S 5.1.  
For $\dim(X) = 5$, the argument parallels that of the case of a five-dimensional complete intersection of three quadrics, with the additional observation that $N = 1$ in the decomposition of the diagonal from Theorem~\ref{Teovoi}.
\end{proof}

\subsection{Quartic double fivefolds}
We say that $X$ is a \emph{quartic double fivefold} if $X$ is a double cover of $\mathbb{P}^5$ branched along a smooth quartic hypersurface $B \subset \mathbb{P}^5$. In particular, such $X$ a smooth Fano manifold of Fano index $4$ and we will denote by $f:X\to \mathbb{P}^5$ the associated double cover map.

\begin{lemma}
  Such a variety $X$ is a Fano fivefold of Hodge level $1$, whose intermediate Jacobian $J(X)$ is a principally polarized abelian variety of dimension $142$.
\end{lemma}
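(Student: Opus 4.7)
The plan is to verify the three assertions --- Fano fivefold, Hodge structure of level one, and $J(X)$ a $142$-dimensional principally polarized abelian variety --- by combining the ramification formula for $f$ with a Hodge-theoretic analysis of the anti-invariant part of $\operatorname{H}^5(X,\C)$ under the covering involution.

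The Fano claim is immediate from the ramification formula. Writing $B\in |2L|$ with $L=\mathcal{O}_{\mathbb{P}^5}(2)$, one has $K_X = f^*(K_{\mathbb{P}^5}+L) = f^*(-4H)$, so $-K_X$ is the pullback of an ample class by a finite morphism and is therefore ample, making $X$ a smooth Fano fivefold of index $4$.

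For the Hodge-theoretic part, the plan is to exploit the $\Z/2$-eigensheaf decomposition
\[
f_*\Omega^p_X \;\simeq\; \Omega^p_{\mathbb{P}^5}\;\oplus\;\Omega^p_{\mathbb{P}^5}(\log B)\otimes\mathcal{O}_{\mathbb{P}^5}(-2)
\]
in order to transfer the computation to $\mathbb{P}^5$. Since $\operatorname{H}^5(\mathbb{P}^5,\C)=0$, the entire middle cohomology lies in the anti-invariant summand and $h^{p,5-p}(X)=h^{5-p}(\mathbb{P}^5,\Omega^p_{\mathbb{P}^5}(\log B)(-2))$. The vanishing $h^{5,0}(X)=0$ is immediate from $\Omega^5_{\mathbb{P}^5}(\log B)(-2)=\omega_{\mathbb{P}^5}(B)(-2)=\mathcal{O}(-4)$. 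For $h^{4,1}(X)$, the residue sequence
\[
0 \to \Omega^4_{\mathbb{P}^5}(-2) \to \Omega^4_{\mathbb{P}^5}(\log B)(-2) \to \Omega^3_B(-2) \to 0
\]
combined with Bott's formula on $\mathbb{P}^5$ reduces the question to the vanishing $\operatorname{H}^1(B,\Omega^3_B(-2))=0$ on the smooth quartic fourfold $B$, which one attacks via the conormal sequence for $B\subset\mathbb{P}^5$ together with Akizuki--Nakano-type vanishing. A more mechanical alternative is to present $X$ as the quasi-smooth weighted hypersurface $\{y^2-q(x)=0\}\subset \mathbb{P}(1^6,2)$ of weighted degree~$4$, check that $X$ avoids the singular point of the ambient space, and invoke the Griffiths--Dolgachev residue isomorphism $h^{p,5-p}(X)=\dim \mathcal{R}(F)_{(6-p)\cdot 4 - 8}$, with $\mathcal{R}(F)=\C[x_0,\ldots,x_5,y]/(y,\partial q/\partial x_i)$; this reduces every Hodge number to a monomial count in a graded Artinian Gorenstein algebra of known Hilbert polynomial.

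Once Hodge level one is established, $\dim J(X)=h^{3,2}(X)$ is computed by the same technique as $h^2(\mathbb{P}^5,\Omega^3_{\mathbb{P}^5}(\log B)(-2))$, or equivalently as the appropriate graded piece of $\mathcal{R}(F)$, and the value $142$ emerges from the Hilbert-series bookkeeping. The principal polarization on $J(X)$ is induced, exactly as in the case of the smooth complete intersections treated in \S \ref{section:Motives}, by the Hodge--Riemann bilinear form on the integral lattice $\operatorname{H}^5(X,\Z)$: concentrated in types $(3,2)$ and $(2,3)$, this form is a unimodular alternating pairing and hence provides a principal polarization on $J(X)$. The main obstacle is the cohomological vanishing $\operatorname{H}^1(B,\Omega^3_B(-2))=0$ underlying the Hodge level-one claim, together with the precise dimension count $h^{3,2}(X)=142$; the weighted-hypersurface route converts both into explicit Hilbert-series computations, and I would favour it as it makes the numerical value $142$ transparent.
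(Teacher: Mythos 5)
Your reduction is the same as the paper's (the eigensheaf decomposition $f_*\Omega^p_X \simeq \Omega^p_{\mathbb{P}^5}\oplus\Omega^p_{\mathbb{P}^5}(\log B)(-2)$, the residue sequence, and Bott vanishing), and you correctly isolate the crux: the vanishing $\operatorname{H}^1(B,\Omega^3_B(-2))=0$. But that vanishing is false, and neither of your two proposed routes can deliver it. The pair $(p',q')=(3,1)$ lies on the critical line $p'+q'=\dim B$, where Akizuki--Nakano gives nothing; running the conormal sequence instead, Serre duality gives $\operatorname{H}^1(B,\Omega^3_B(-2))^\vee\simeq\operatorname{H}^3(B,\Omega^1_B(2))$, and the twisted conormal sequence $0\to\mathcal{O}_B(-2)\to\Omega^1_{\mathbb{P}^5}|_B(2)\to\Omega^1_B(2)\to0$ together with Bott vanishing on $\mathbb{P}^5$ yields $\operatorname{H}^3(B,\Omega^1_B(2))\simeq\operatorname{H}^4(B,\mathcal{O}_B(-2))=\operatorname{H}^4(B,\omega_B)\simeq\C$, since $\omega_B=\mathcal{O}_B(-2)$ for a quartic fourfold. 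Hence $h^{4,1}(X)=1$ and the Hodge structure on $\operatorname{H}^5(X)$ is not of level one. Your ``mechanical alternative'' detects exactly the same thing: with $\mathcal{R}=\C[x_0,\ldots,x_5]/(\partial q/\partial x_i)$ of Hilbert series $(1+t+t^2)^6$, your own formula gives $h^{4,1}(X)=\dim\mathcal{R}_0=1$ and $h^{3,2}(X)=\dim\mathcal{R}_4=90$, not $142$; consistently, $\chi(X)=2\chi(\mathbb{P}^5)-\chi(B)=12-188=-176$ forces $b_5(X)=182$ rather than $284$. (The number $142$ is $h^{2,2}(B)$, which is not what the residue sequence computes: the relevant group is $\operatorname{H}^2(B,\Omega^2_B(-2))$, and the twist matters.)

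So the gap is not a missing lemma you could supply: carried out honestly, your own computation contradicts the statement rather than proving it. The paper's proof has the identical defect --- it invokes a vanishing for $\operatorname{H}^q(B,\Omega^{p-1}_B(-2))$ with $|p-q|>1$ precisely on the critical line $(p-1)+q=\dim B$, where the cited vanishing fails for $(p,q)=(4,1)$ --- so you should not try to patch the argument to reach the stated conclusion. The parts of your proposal that do work are the Fano claim ($K_X=f^*\mathcal{O}_{\mathbb{P}^5}(-4)$, so $X$ has index $4$) and, conditionally on level one, the principal polarization via the Hodge--Riemann form; but the level-one input itself is the step that breaks.
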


\begin{proof}
For a double (ramified) covering $f:X\to \mathbb{P}^{2m+1}$ branched along a smooth quartic hypersuperface $B\subseteq \mathbb{P}^{2m+1}$, we know by \cite[Lemma~3.22]{EV} that one has
\[
    \text{H}^q(X,\Omega^p_X) \simeq \text{H}^q(\mathbb{P}^{2m+1},\Omega^p_{\mathbb{P}^{2m+1}})\ \oplus\ \text{H}^q\big(\mathbb{P}^{2m+1},\Omega_{\mathbb{P}^{2m+1}}^{p}(\log B)(-3)\big).
\]
When $p+q=2m+1$, Bott's vanishing theorem implies $\text{H}^q(\mathbb{P}^{2m+1},\Omega^p_{\mathbb{P}^{2m+1}})=0$.  
To compute the logarithmic term we use the Poincaré residue sequence
\[
    0 \to \Omega^p_{\mathbb{P}}(-3) \to \Omega^p_{\mathbb{P}}(\log B)(-3) \to \Omega^{p-1}_{\mathbb{P}}(-3) \to 0,
\]
whose associated long exact sequence in cohomology reads
\begin{align*}
    &0 \to \text{H}^0\big(\Omega^p_{\mathbb{P}}(-3)\big) \to \text{H}^0\big(\Omega^p_{\mathbb{P}}(\log B)(-3)\big) \to \text{H}^0\big(\Omega^{p-1}_{B}(-3)\big) \\
    &\hspace{2cm} \to  \text{H}^1\big(\Omega^p_{\mathbb{P}}(-3)\big) \to \cdots \to  \text{H}^{2m+1}\big(\Omega^p_{\mathbb{P}}(-3)\big) \to \text{H}^{2m+1}\big(\Omega^p_{\mathbb{P}}(\log B)(-3)\big) \\
    &\hspace{2cm} \to \text{H}^{2m+1}\big(\Omega^{p-1}_{B}(-3)\big) \to 0.
\end{align*}
Bott’s theorem gives $\text{H}^q(\mathbb{P}^{2m+1},\Omega^p_{\mathbb{P}^{2m+1}})=0$ unless $p=q$, hence for $p+q=2m+1$ we obtain
\[
\text{H}^q\big(\mathbb{P}^{2m+1},\Omega^p_{\mathbb{P}^{2m+1}}(\log B)(-3)\big) \simeq \text{H}^q\big(B,\Omega^{p-1}_{B}(-3)\big).
\]
By \cite[Theorem~D.(2)]{Vo}, $\text{H}^q(B,\Omega^{p-1}_B(-3))=0$ if $p+q=2m+1$ and either $\frac{m-1}{3} \geq p$ or $-\frac{m+2}{3} + (2m+2) \leq p$.  
Thus the potentially nonzero Hodge numbers $h^{p,\, 2m+1-p}$ occur only for $p \in \big( \frac{m-1}{3},\frac{5m+4}{3} \big)$.

Now consider the specific case of a double cover of $\mathbb{P}^5$ branched over a quartic hypersurface $B$, i.e., the case $m=2$.  
In this setting, the analogous residue sequence becomes
\[
    0 \to \Omega^p_{\mathbb{P}}(-2) \to \Omega^p_{\mathbb{P}}(\log B)(-2) \to \Omega^{p-1}_{\mathbb{P}}(-2) \to 0,
\]
and its long exact sequence in cohomology is
\begin{align*}
    &0 \to \text{H}^0\big(\Omega^p_{\mathbb{P}}(-2)\big) \to \text{H}^0\big(\Omega^p_{\mathbb{P}}(\log B)(-2)\big) \to \text{H}^0\big(\Omega^{p-1}_{B}(-2)\big) \\
    &\hspace{2cm} \to \text{H}^1\big(\Omega^p_{\mathbb{P}}(-2)\big) \to \cdots \to \text{H}^5\big(\Omega^p_{\mathbb{P}}(-2)\big) \to \text{H}^5\big(\Omega^p_{\mathbb{P}}(\log B)(-2)\big) \\
    &\hspace{2cm} \to \text{H}^5\big(\Omega^{p-1}_{B}(-2)\big) \to 0.
\end{align*}
By Bott's vanishing theorem on $\mathbb{P}^5$, $\text{H}^q(\mathbb{P}^{5},\Omega^p_{\mathbb{P}^{5}}(l))$ vanishes except in the following cases:  
(1) $p=q$ and $l=0$; (2) $q=0$ and $l>p$; (3) $q=5$, $l<-5+p$.  
In our case this implies
\[
\text{H}^q(X,\Omega^p_X) \simeq \text{H}^q\big(\mathbb{P}^{5},\Omega_{\mathbb{P}^{5}}^{p}(\log B)(-2)\big),
\]
and for $p+q=5$ we have
\[
\text{H}^q\big(\mathbb{P}^{5},\Omega^p_{\mathbb{P}^5}(\log B)(-2)\big) \simeq \text{H}^q\big(B,\Omega^{p-1}_{B}(-2)\big).
\]
By \cite[Theorem~D.(2)]{Vo}, $\text{H}^q(B,\Omega^{p-1}_B(-2))=0$ for $p+q=5$ whenever $|q-p|>1$.  
The only nontrivial Hodge number in this range is $h^{2,2}(B)=142$, which is the Hodge number of a quartic hypersurface in $\mathbb{P}^5$.  
From this, one deduces the cohomology groups of $X$:
\[
    \text{H}^{k}(X,\mathbb{Z}) \ \simeq \ 
    \begin{cases}
        \mathbb{Z}, & \text{if $k$ is even}, \\
        \mathbb{Z}^{\oplus 284}, & \text{if $k=5$}, \\
        0, & \text{otherwise}.
    \end{cases}
\]
This shows that $X$ has Hodge level $1$, and its intermediate Jacobian $J(X)$ is a principally polarized abelian variety of dimension $142$, as claimed.
\end{proof}

We summarize here the cycle-theoretic properties of quartic double fivefolds. In most cases it is rather simple to verify that the comparison maps $\kappa^i:\operatorname{CH}^i(X) \to \operatorname{CH}^i_{\et}(X)$ are isomorphisms:

\vspace{2mm}

Let $X$ be a double cover of $\mathbb{P}^5$ branched along a smooth quartic hypersurface $B \subset \mathbb{P}^5$.  
The covering map $f\colon X \to \mathbb{P}^5$ restricts to a Galois cover of degree $2$,
\[
 X \setminus f^{-1}(B) \ \longrightarrow \ \mathbb{P}^5 \setminus B.
\]
We have the following:

\begin{itemize}
    \item By the results given in \S \ref{section:Chow etale}, $\kappa^0:\CH_\et^0(X) \xrightarrow{\simeq} \CH^0(X) \ \simeq\ \mathbb{Z}$ is an isomorphism, and the groups are generated by the fundamental class of $X$.
    
    \item By the results given in \S \ref{section:Chow etale}, $\kappa^1:\CH_\et^1(X) \xrightarrow{\simeq} \CH^1(X) \ \simeq\ \mathbb{Z}$ is an isomorphism, and the groups are generated by the class of a hyperplane section, by Lefschetz's $(1,1)$ theorem.

    \item By the results given in \S \ref{section:Chow etale}, we know that there is a short exact sequence
    \[
    0\to \CH^2(X) \xrightarrow{\kappa^2} \CH_{\et}^2(X) \to \operatorname{H}_{\operatorname{nr}}^3(X,\mathbb{Q}\slash \mathbb{Z}(2))\to 0,
    \]
    and, in particular for such $X$ (being unirational), the latter group $\text{H}^3_{\text{nr}}(X,\mathbb{Q}/\mathbb{Z}(2))$ measures the failure of the integral Hodge conjecture in degree $4$, by \cite[Théorème~3.9]{CTV}. As $\operatorname{H}_B^4(X,\mathbb{Z})\simeq \mathbb{Z}$ is generated by the class of $f^\ast \mathcal{O}_{\mathbb{P}^5}(1)^2$, we have that the Integral Hodge Conjecture holds in degree $4$, thus $\text{H}^3_{\text{nr}}(X,\mathbb{Q}/\mathbb{Z}(2))=0$, and we remark that the kernel of the cycle class map is trivial. Indeed, since $X$ is unirational (\cite[Proposition~5]{Bea15}), we have $\text{H}^3_{\text{nr}}(X,\mathbb{Z}(2)) = 0$. 
    Then, by \cite[Théorème~2.7]{CTV}, $\mathrm{Griff}^2(X) = 0$.  
    Using the decomposition of the diagonal, \cite[Theorem~1]{BS83} implies that $\CH^2(X)_{\mathrm{alg}}$ is weakly representable, and since $\text{H}^3(X,\mathbb{Z})$ is trivial, we conclude $\CH^2(X)_{\mathrm{alg}} = 0$ (by the bound on the dimension of the representative in \cite[Theorem~1.9]{Murr}). To summarize, we have that $\kappa^2:\CH^2(X) \ \xrightarrow{\simeq}\ \CH_\et^2(X) \ \simeq\ \mathbb{Z}$ is an isomorphism, and the groups are generated by the class of $f^\ast \mathcal{O}_{\mathbb{P}^5}(1)^2$.
    
    \item Since $X$ contains a line, the natural map $\CH_1(X) \to \text{H}_2(X,\mathbb{Z})\simeq \mathbb{Z}$ is surjective.  
    Moreover, $\mathrm{Griff}^4(X) = 0$, so $\CH^4(X)_{\mathrm{alg}} \simeq \CH^4(X)_{\mathrm{hom}}$.  
    By \cite[Theorem~1.1]{Suz}, $\CH^{\mathrm{alg}}_1(X)_{\mathrm{tors}} = 0$, hence $\CH^4(X)_{\mathrm{hom}}$ is uniquely divisible.  
    Therefore, the map $\kappa^4:\CH^4(X) \xrightarrow{\simeq} \CH^4_\et(X)$ is an isomorphism.  
    In particular, any curve $C \subset X$ is algebraically equivalent to a sum of lines contained in $X$.
    
    \item By the results given in \S \ref{section:Chow etale}, $\kappa^5:\CH^5(X)\xrightarrow{\simeq}\CH_{\et}^5(X) \simeq \mathbb{Z}$, and the groups are generated by the class of a point, since $X$ is unirational.
\end{itemize}

\begin{prop}
    Let $X$ be a double cover of $\mathbb{P}^5$ branched along a quartic hypersurface $B \subset \mathbb{P}^5$. Then $\CH_1(X) \ \simeq \ \mathbb{Z}$.
\end{prop}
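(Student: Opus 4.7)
From the bullets preceding the statement, the cycle class map $\operatorname{CH}_1(X)\to \operatorname{H}_2(X,\Z)\simeq \Z$ is surjective with kernel $\operatorname{CH}_1(X)_{\hom}=\operatorname{CH}_1(X)_{\alg}$, which is uniquely divisible (hence a $\Q$-vector space). Therefore the proposition reduces to showing $\operatorname{CH}_1(X)_{\alg}=0$, equivalently that $\operatorname{CH}_1(X)_{\Q}\simeq \Q$.

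My plan is to combine the geometric description of the Fano scheme of lines $F:=F_1(X)$ furnished by Appendix~\ref{appendixA} with the motivic strategy of \S\ref{section:Motives}. The geometric route would proceed as follows. Since $F$ is irreducible and unirational (by the Appendix), we have $\operatorname{CH}_0(F)_\Q\simeq \Q$, so via the universal line correspondence $\mathcal{L}\subset F\times X$ all lines in $X$ are rationally equivalent up to $\Q$-scalar. Combined with the recorded fact that every curve in $X$ is algebraically equivalent to a $\Z$-linear combination of lines, every class in $\operatorname{CH}_1(X)_\Q$ is of the form $n[\ell]+\gamma$ with $n\in \Q$ and $\gamma \in \operatorname{CH}_1(X)_\alg$. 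One then uses the incidence variety $I:=\{(L,p):p\in L\subset X\}\subset F\times X$, together with a Bloch--Srinivas type decomposition of the diagonal of $X$ (available because $X$ is unirational and hence $\operatorname{CH}_0$-universally trivial), to show that the residual $\gamma$ must vanish.

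The motivic route of \S\ref{section:Motives} provides a complementary attack. Since $X$ is a double cover of $\mathbb{P}^5$ with $\dim\operatorname{H}^5(X,\Q)=284>1$, $h(X)$ admits a Chow--K\"unneth decomposition (cf.~the examples of \S\ref{section:Motives}). Using the Hodge number computation in the preceding lemma (namely $\operatorname{H}^k(X,\Q)\simeq\Q$ for $k$ even in $[0,10]$ and $\operatorname{H}^k(X,\Q)=0$ for $k$ odd, $k\neq 5$), one obtains
\[
    h(X)_{\Q}\simeq \bigoplus_{i=0}^{5}\mathbf{L}^i\ \oplus\ h^5(X),
\]
and the level-one structure on $\operatorname{H}^5(X,\Q)$ together with Kimura finite-dimensionality of $h(X)$ should upgrade the homological identification $\operatorname{H}^5(X,\Q)\simeq \operatorname{H}^1(J(X),\Q)(-2)$ to a motivic isomorphism $h^5(X)\simeq h^1(J(X))(-2)$, exactly as in the theorem in \S\ref{section:Motives}. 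Applying $\operatorname{CH}^4(-)_\Q$, the only nontrivial contributions are $\operatorname{CH}^4(\mathbf{L}^4)_\Q\simeq \Q$ and $\operatorname{CH}^2(h^1(J(X)))_\Q$, the latter being the Beauville piece $\operatorname{CH}^2_{(3)}(J(X))_\Q$; its vanishing (predicted by Beauville's conjecture) gives $\operatorname{CH}_1(X)_\Q\simeq \Q$.

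The principal obstacles are: in the geometric route, realising the bridge from algebraic to rational equivalence for the residual part $\gamma\in \operatorname{CH}_1(X)_\alg$, which requires exploiting the rich deformation theory of lines and the incidence variety $I$ described in Appendix~\ref{appendixA}; in the motivic route, establishing Kimura finite-dimensionality of $h(X)$ for a quartic double fivefold (not a formal consequence of the Chow--K\"unneth decomposition) and the unconditional vanishing of $\operatorname{CH}^2_{(3)}(J(X))_\Q$, which is a deep instance of Beauville's conjecture. I expect the paper's argument to favour the geometric route, leveraging the description of Fano schemes of linear-like subvarieties from the Appendix to explicitly produce rational equivalences between algebraically equivalent cycles.
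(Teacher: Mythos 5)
Your reduction to $\CH_1(X)_{\Q}\simeq\Q$ is correct and matches the paper, but neither of your two routes is carried out, and the steps you defer are precisely the substantive ones, so this is not yet a proof. In the geometric route, the Appendix does not establish irreducibility or unirationality of $F_1(X)$ (it only gives non-emptiness, the expected dimension, and the canonical bundle), and in any case the whole difficulty is concentrated in the residual piece $\gamma\in\CH_1(X)_{\alg}$: a Bloch--Srinivas decomposition with $\CH_0$ supported on a point controls $\operatorname{Griff}^2$ and $\CH^2_{\alg}$, not rational triviality of algebraically trivial $1$-cycles on a fivefold, and you give no mechanism for killing $\gamma$. In the motivic route, the isomorphism $h^5(X)\simeq h^1(J(X))\otimes\mathbf{L}^{\otimes 2}$ and the Kimura finite-dimensionality of $h(X)$ are exactly what the paper proves \emph{after} this proposition (the decomposition \eqref{decompetale} rests on the diagonal decomposition of the subsequent corollary, which itself uses $\CH_1(X)\simeq\Z$), so within the paper's logic this route is circular. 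Incidentally, the final vanishing you attribute to Beauville's conjecture is not needed in that strength: $\CH^i(h^1(A))_{\Q}=0$ for $i\neq 1$ is Murre's theorem, quoted in \S\ref{section:Motives}; the genuinely hard input is the motivic identification itself.

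The paper's actual argument is shorter and avoids all of this. It uses the multiplicative Chow--K\"unneth decomposition furnished directly by the covering map $f:X\to\mathbb{P}^5$, with $\pi_X^i=\tfrac12\,{}^t\Gamma_f\circ\pi^i_{\mathbb{P}^5}\circ\Gamma_f$ for $i<5$ and $\pi_X^i=\pi_X^{10-i}$ for $i>5$. For $i\neq 5$ the action of $\pi_X^i$ on $\CH^4(X)_{\Q}$ factors through $\CH^\ast(\mathbb{P}^5)_{\Q}$, so the only contributions to $\CH^4(X)_{\Q}$ are a copy of $\Q$ from $\pi_X^8$ and $\CH^4(X)_{\hom,\Q}$ from the middle projector $\pi_X^5$; the latter is then shown to be torsion using $\operatorname{H}^3(X,\Q)=\operatorname{H}^7(X,\Q)=0$. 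No Fano scheme of lines, no finite-dimensionality, and no identification with the Jacobian motive is required. If you want to salvage your write-up, the fix is to replace both routes by this direct computation with the pulled-back projectors.
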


\begin{proof}
By the discussion above, it suffices to show that $\CH_1(X)_{\mathbb{Q}} \simeq \mathbb{Q}$.  
For such $X$, there exists a multiplicative Chow-Künneth decomposition of the form
\[
    \pi_X^i :=
    \begin{cases}
        \frac12\, { }^t\Gamma_f \circ \pi^i_{\mathbb{P}^5} \circ \Gamma_f, & \text{if } i < 5, \\[0.4em]
        \pi_X^{10-i}, & \text{if } i > 5, \\[0.4em]
        \Delta_X - \sum_{k \neq 5} \pi_X^k, & \text{if } i = 5,
    \end{cases}
\]
where $f\colon X \to \mathbb{P}^5$ is the covering map and $\pi^i_{\mathbb{P}^5}$ denotes the Chow-Künneth projectors of $\mathbb{P}^5$. Since the action of $\pi_X^i$ on $\CH^j(X)_{\mathbb{Q}}$ factors through $\CH^j(\mathbb{P}^5)_{\mathbb{Q}}$, we have
\begin{align*}
(\pi_X^{2i})_* \CH^j(X)_{\mathbb{Q}}
&\;\simeq\;
\begin{cases}
\mathbb{Q}, & \text{if } i = j, \\[0.2em]
0, & \text{otherwise},
\end{cases} \\[1em]
(\pi_X^{2i+1})_* \CH^j(X)_{\mathbb{Q}}
&\;\simeq\;
\begin{cases}
\CH^j(X)_{\mathrm{hom}}, & \text{if } i = 2, \\[0.2em]
0, & \text{otherwise}.
\end{cases}
\end{align*}
It follows that $\CH_1(X)_{\mathrm{hom}}$ must be torsion.  
Indeed, if it were not, then we would have
\[
\text{H}^3(X,\mathbb{Q}) \otimes \text{H}^7(X,\mathbb{Q}) \ \neq \ 0,
\]
since $\pi_X^5 \neq 0$, which is impossible in this case.  
Therefore $\CH_1(X)_{\mathbb{Q}} \simeq \mathbb{Q}$, and hence $\CH_1(X) \simeq \mathbb{Z}$.
\end{proof}

As a consequence of the previous results, we obtain the following.

\begin{corollary}
    Let $X$ be a double cover of $\mathbb{P}^5$ branched along a quartic hypersurface $B \subset \mathbb{P}^5$. Then there exists $m \in \mathbb{N}^{\geq 1}$ and a decomposition of the diagonal
    \[
        m \Delta_X = m[x \times X] + m\ell \times h + Z',
    \]
    where $x \in X$ is a point, $\ell \subset X$ is a line, $h \in \CH^1(X)$ is the generator, and $Z'$ is supported on $X \times T$ with $T$ a closed subset of pure dimension $3$ in $X$.
\end{corollary}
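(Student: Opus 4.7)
The plan is to apply Theorem \ref{Teovoi} with coniveau parameter $c=2$, which produces a decomposition of the desired general shape, and then to refine its terms using the explicit description of the low-(co)dimension Chow groups of $X$ obtained above.

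I would first verify the hypothesis of Theorem~\ref{Teovoi}: the cycle class maps $c^{5-k}_\Q:\CH_k(X)_\Q\hookrightarrow \operatorname{H}^{2(5-k)}(X,\Q)$ must be injective for $k=0,1$. This follows from unirationality of $X$ (which gives $\CH_0(X)\simeq\Z$) and from the preceding proposition (which gives $\CH_1(X)\simeq\Z$), combined with $\operatorname{H}^{10}(X,\Q)$ and $\operatorname{H}^{8}(X,\Q)$ being one-dimensional. The theorem then yields an integer $N\in\N^{\geq 1}$ and a decomposition
\[
N\Delta_X = Z_0 + Z_1 + Z' \in \CH^5(X\times X),
\]
where $Z_0$ is supported on $\{w_0\}\times X$ for a point $w_0\in X$, $Z_1$ is supported on $W_1\times W_1'$ with $\dim W_1=1$ and $\dim W_1'=4$, and $Z'$ is supported on $X\times T$ with $\dim T\leq 2$.

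To put $Z_0$ and $Z_1$ in the stated form, I would push them forward from resolutions of $W_1\times W_1'$ and write them as $\Z$-linear combinations of cycles $\{w\}\times X$ and $C_i\times S_j$, with $C_i\subset X$ an irreducible curve and $S_j\subset X$ an irreducible fourfold. Using $\CH_0(X)=\Z[x]$, $\CH_1(X)=\Z[\ell]$ and $\CH^1(X)=\Z[h]$, each $[C_i]=a_i[\ell]$ and $[S_j]=b_j[h]$ for integers $a_i,b_j$, so bilinearity of the external product (compatible with rational equivalence) yields $Z_0 = \alpha\,(x\times X)$ and $Z_1=\beta\,(\ell\times h)$ for some integers $\alpha,\beta$. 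I would then evaluate the identity $N\Delta_X=Z_0+Z_1+Z'$ on the test classes $[X]\in\CH_5(X)$ and $h\in\CH^1(X)$; the key observation is that $Z'_\ast([X])$ and $Z'_\ast(h)$ both vanish because they factor through $\CH_5(T)=0$ and $\CH_4(T)=0$ respectively. Combined with the elementary correspondence actions $(x\times X)_\ast([X])=[X]$, $(\ell\times h)_\ast([X])=0$ (the projection $p_2\colon\ell\times h\to X$ has one-dimensional fibres, hence is not generically finite), $(x\times X)_\ast(h)=0$ and $(\ell\times h)_\ast(h)=(\ell\cdot h)\,h=h$ (using $\ell\cdot h=1$), this forces $\alpha=\beta=N$. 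Setting $m:=N$ and enlarging $T$ to a closed subset of pure dimension $3$ in $X$ (e.g.\ by adjoining a general $h^2$-section) yields the stated decomposition.

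The main obstacle is ensuring that the rewriting $Z_1=\beta\,(\ell\times h)$ is valid integrally rather than merely rationally; this relies on the integral generation $\CH_1(X)=\Z[\ell]$ and $\CH^1(X)=\Z[h]$ and on the torsion-freeness statements established earlier in this subsection. A secondary subtlety is the careful dimension bookkeeping needed to deduce the vanishing of the $Z'$-contributions on the two test classes.
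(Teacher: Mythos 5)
Your argument is correct and is essentially the proof the paper intends: the corollary is stated without proof as a consequence of the preceding results, and the implicit route (as in the analogous statement for $X_{(2,2,2)}$, following Fu--Tian) is exactly your application of Theorem~\ref{Teovoi} with $c=2$, using $\CH_0(X)\simeq\Z$, $\CH_1(X)\simeq\Z[\ell]$ and $\CH^1(X)\simeq\Z[h]$ to rewrite $Z_0$ and $Z_1$, with the test classes $[X]$ and $h$ pinning down the coefficients. The only step you elide is that Theorem~\ref{Teovoi} produces the decomposition in $\CH^5(X\times X)_\Q$, so one must first clear denominators and kill a possible torsion discrepancy before performing the integral rewriting; this only enlarges $N$ and is absorbed into the final choice of $m$.
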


During the rest of the section will mainly focus on the case of cycles of codimension 3. In that setting, our main result is the following.

\begin{theorem}\label{teo:IHC-codim3}
    Let $X$ be a smooth double quartic fivefold. Then, the integral Hodge conjecture holds in codimension 3.
\end{theorem}

For the reader's convenience we begin by proving several related results, which might be interesting on their own, that will be needed for the proof of Theorem~\ref{teo:IHC-codim3}.

\begin{prop}\label{prop:GriffTor}
    The Griffiths group $\mathrm{Griff}^3(X)$ is torsion.
\end{prop}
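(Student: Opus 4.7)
The plan is to apply the standard Bloch--Srinivas correspondence argument directly to the diagonal decomposition provided by the preceding corollary. Given any class $T\in \CH^3(X)_{\hom}$, the objective is to show that $mT\in \CH^3(X)_{\alg}$ for the integer $m$ appearing in that decomposition, which will prove that $m\cdot \operatorname{Griff}^3(X)=0$.

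First I would check that the two ``small'' summands of the diagonal act trivially on $\CH^3(X)$. Writing $[x\times X]=p_1^{*}[x]$ and $[\ell\times h]=p_1^{*}[\ell]\cdot p_2^{*}h$ in $\CH(X\times X)$, the correspondence action on $T$ involves the intersection products $[x]\cdot T\in \CH^{8}(X)=0$ and $[\ell]\cdot T\in \CH^{7}(X)=0$, both of which vanish for dimension reasons on the five-fold $X$. Consequently, applying $(p_2)_{*}(\, \cdot\, p_1^{*}T)$ to the whole diagonal decomposition gives the identity
\[
    mT \;=\; Z'(T) \quad \in\ \CH^3(X),
\]
where $Z'$ is supported on $X\times T_0$ with $\dim T_0=3$.

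The next step is to desingularise $T_0$ via a proper map $\nu\colon \widetilde{T_0}\to T_0\hookrightarrow X$ and to write $Z'=(\mathrm{id}_X\times \nu)_{*}\widetilde{Z}$ for some $\widetilde{Z}\in \CH^3(X\times \widetilde{T_0})$. Using the projection formula, one gets $Z'(T)=\nu_{*}\bigl(\widetilde{Z}(T)\bigr)$ with $\widetilde{Z}(T)\in \CH^1(\widetilde{T_0})$, i.e.\ a divisor class on a smooth projective threefold. Since correspondences preserve homological triviality, $\widetilde{Z}(T)$ is homologically trivial on $\widetilde{T_0}$; and on any smooth projective variety, divisorial homological triviality coincides with algebraic triviality (via $\operatorname{Pic}^{0}$). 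Therefore $\widetilde{Z}(T)\in \CH^1(\widetilde{T_0})_{\alg}$, and since proper pushforward preserves algebraic equivalence, $mT = \nu_{*}\widetilde{Z}(T)\in \CH^3(X)_{\alg}$.

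There is no substantial obstacle in this argument: the crucial input, namely the diagonal decomposition in which the ``error term'' $Z'$ is supported over a threefold, is already established, and the remaining steps are all formal manipulations. The only point that requires a little care is the vanishing of the two lower components on $\CH^3(X)$, which boils down to the elementary dimension count above; once this is noted, the conclusion follows from the standard Bloch--Srinivas machinery applied to a resolution of $T_0$.
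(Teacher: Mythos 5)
Your argument is correct and is essentially the paper's own proof: the two low-dimensional terms of the decomposition act as zero on $\CH^3(X)_{\hom}$ by the codimension count, the term $Z'$ is pushed through a desingularization of its three-dimensional support, and one concludes because a homologically trivial divisor class on a smooth projective variety is algebraically trivial and proper pushforward preserves algebraic equivalence. The only (harmless) divergence is that the paper lifts $Z'$ to $X\times\widetilde{T}$ only up to a further integer multiple $m'$ --- since pushforward from the desingularization need not hit $Z'$ integrally when components lie over the singular locus --- so the annihilating integer is $mm'$ rather than $m$; this does not affect the torsion conclusion, and in exchange you are more careful than the paper in restricting the argument to homologically trivial classes, as one must.
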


\begin{proof}
    Note first that $T$ is not necessarily smooth nor connected. Let $\widetilde{T} \to T$ be a desingularization, with $\widetilde{T} = \coprod_{i} \widetilde{T}_i$, and define the composite map $\widetilde{i} \colon \widetilde{T} \to T \hookrightarrow X$. Let $\widetilde{Z}' \in \CH^3(X \times \widetilde{T})$ be such that its image is a multiple $m' Z' \in \CH^3(X \times T)$. We have
    \[
        \mathrm{Pic}^0(\widetilde{T}) \simeq \prod_{i} \mathrm{Pic}^0(\widetilde{T}_i), \qquad \mathrm{Alb}(\widetilde{T}) \simeq \prod_{i} \mathrm{Alb}(\widetilde{T}_i).
    \]
    Now let $\alpha \in \CH^3(X)$. The action of $mm'\Delta_X$ gives
    \[
        (mm'\Delta_X)_*(\alpha) = mm'\alpha = \widetilde{i}_*(\widetilde{Z}'_*(\alpha)) \in \CH^3(X).
    \]
    Since $\widetilde{Z}'_*(\alpha) \in \CH^1(\widetilde{T})$, it is homologically equivalent to zero, hence also algebraically equivalent to zero. This implies that
    \[
        mm'\alpha = \widetilde{i}_*(\widetilde{Z}'_*(\alpha))
    \]
    is algebraically equivalent to zero, and therefore $\mathrm{Griff}^3(X)$ is a torsion group of order dividing $mm'$.
\end{proof}

We will denote by $\Phi_X$ the relevant Abel-Jacobi map $\Phi_X^3$ introduced in \S \ref{section:Chow}.

\begin{prop}\label{prop:AJ-surjective}
The restriction of the Abel--Jacobi map 
\[
\Phi_X \colon \CH^3(X)_{\mathrm{alg}} \ \longrightarrow \ J^5(X)
\]
is surjective and has finite kernel.    
\end{prop}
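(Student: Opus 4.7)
The plan is to combine the diagonal decomposition of the preceding corollary with a Bloch--Srinivas style argument, exploiting the Hodge level~$1$ condition on~$X$. First, I choose a desingularisation $\widetilde{i}\colon \widetilde{T}\to T\hookrightarrow X$ of $T$ (with $\widetilde{T}$ smooth, possibly reducible, of pure dimension~$3$) and a cycle $\widetilde{Z}'\in \CH^3(X\times \widetilde{T})$ such that $(\operatorname{id}_X\times \widetilde{i})_*\widetilde{Z}' = m'Z'$ for some $m'\in \mathbb{N}^{\geq 1}$, exactly as in the proof of the torsion of $\operatorname{Griff}^3(X)$ above. On $\CH^3(X)$, the first two pieces of the diagonal decomposition act trivially: $(x\times X)_*$ kills every positive-codimension class, and $(\ell\times h)_*(\alpha)=\deg(\alpha\cdot\ell)\cdot h=0$ because $\alpha\cdot\ell\in \CH^7(X)=0$ for dimensional reasons. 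The projection formula then yields the key identity
\[
    mm'\alpha \;=\; \widetilde{i}_*\!\bigl(\widetilde{Z}'_*(\alpha)\bigr), \qquad \alpha\in \CH^3(X)_{\hom},
\]
with $\widetilde{Z}'_*(\alpha)\in \CH^1(\widetilde{T})_{\hom}=\operatorname{Pic}^0(\widetilde{T})$ since correspondences preserve homological equivalence.

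To establish surjectivity, note that $\CH^3(X)_{\alg}$ is divisible, so the identity above implies that the pushforward $\widetilde{i}_*\colon \operatorname{Pic}^0(\widetilde{T})\twoheadrightarrow \CH^3(X)_{\alg}$ is itself surjective. Consequently $\Phi_X|_{\CH^3(X)_{\alg}}$ factors through the homomorphism of abelian varieties
\[
    \Psi \;:=\; \Phi_X\circ \widetilde{i}_* \;=\; \widetilde{i}_*\circ \Phi^1_{\widetilde{T}}\colon \operatorname{Pic}^0(\widetilde{T})\ \longrightarrow\ J^5(X),
\]
the latter identified with the Gysin morphism of intermediate Jacobians. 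The same diagonal argument applied to rational cohomology shows that $\operatorname{H}^5(X,\mathbb{Q})$ lies in the image of $\widetilde{i}_*\colon \operatorname{H}^1(\widetilde{T},\mathbb{Q})(-2)\to \operatorname{H}^5(X,\mathbb{Q})$, whose image is supported on~$T$, a subvariety of codimension~$2$ in~$X$. Hence $\operatorname{N}^2\operatorname{H}^5(X,\mathbb{Q})=\operatorname{H}^5(X,\mathbb{Q})$ and $J^5_{\alg}(X)=J^5_H(X)=J^5(X)$, where the final equality uses Hodge level~$1$. Thus $\Psi$ is surjective, and so is $\Phi_X|_{\CH^3(X)_{\alg}}$.

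For finiteness of the kernel, suppose $\alpha\in \CH^3(X)_{\alg}$ satisfies $\Phi_X(\alpha)=0$. The compatibility of $\widetilde{Z}'$ with the Abel--Jacobi maps on both sides, together with $\Phi^1_{\widetilde{T}}=\operatorname{id}_{\operatorname{Pic}^0(\widetilde{T})}$, forces $\widetilde{Z}'_*(\alpha)=0$, and the key identity then gives $mm'\alpha=0$. Hence $\ker\!\bigl(\Phi_X|_{\CH^3(X)_{\alg}}\bigr)$ is annihilated by~$mm'$. Writing this kernel as $K/K'$, with $K:=\ker(\Psi)\subseteq \operatorname{Pic}^0(\widetilde{T})$ and $K':=\ker\!\bigl(\widetilde{i}_*|_{\operatorname{Pic}^0(\widetilde{T})}\bigr)\subseteq K$, the $mm'$-torsion condition translates into $mm'K\subseteq K'$. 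The identity component $K^\circ$ of $K$ is an abelian subvariety of $\operatorname{Pic}^0(\widetilde{T})$, so $mm'K^\circ=K^\circ$ and therefore $K^\circ\subseteq K'$; thus $K/K'$ is a quotient of the finite group scheme $K/K^\circ$, hence finite.

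The main obstacle lies in the identification $J^5_{\alg}(X)=J^5(X)$, which amounts to the relevant instance of the generalised Hodge conjecture in coniveau~$2$. This is, however, delivered automatically by the diagonal decomposition of the preceding corollary, since the error term $Z'$ is already supported on a codimension-$2$ subvariety of~$X$, in perfect match with the Hodge level~$1$ structure on the middle cohomology.
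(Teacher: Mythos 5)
Your proof is correct and follows essentially the same route as the paper's: both exploit the decomposition $mm'\Delta_X$, desingularise the codimension-two support $T$, and transfer surjectivity and kernel control to $\operatorname{Pic}^0(\widetilde{T})$ (resp.\ $\mathrm{Alb}(\widetilde{T})$) via the functoriality of the Abel--Jacobi map under correspondences. The only differences are organisational: for surjectivity the paper runs through $\mathrm{Alb}(\widetilde{T})$ using the transpose correspondence and the surjectivity of $\cdot\, mm'$ on $J^5(X)$, whereas you stay on the $\operatorname{Pic}^0(\widetilde{T})$ side and insert the cohomological coniveau computation; for the kernel the paper shows directly that it is contained in $\widetilde{i}_*\bigl(\operatorname{Pic}^0(\widetilde{T})[mm']\bigr)$, while you present it as an $mm'$-torsion quotient of the algebraic group $\ker(\Psi)$ --- both yield finiteness.
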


\begin{proof}
    Using the same notation as in the previous proposition, we first prove the surjectivity of $\Phi_X$, following an argument similar to \cite[Proposition~18]{FT}. We have the commutative diagram
    \[
    \begin{tikzcd}
        \CH^3(X)_{\mathrm{alg}} \arrow{r}{\Phi_X} \arrow[dd, bend right=60,swap, two heads]{}{\cdot mm'}\arrow{d}{\widetilde{i}^*} 
        & J^5(X) \arrow[dd, bend left=60, two heads]{}{\cdot mm'} \arrow{d}{\widetilde{i}^*} \\
        \CH^3(\widetilde{T})_{\mathrm{alg}} \arrow[two heads]{r}{\mathrm{alb}_{\widetilde{T}}} \arrow{d}{[\widetilde{Z}']^*}  
        & \mathrm{Alb}(\widetilde{T}) \arrow[two heads]{d}{[\widetilde{Z}']^*} \\
        \CH^3(X)_{\mathrm{alg}} \arrow{r}{\Phi_X}  
        & J^5(X)
    \end{tikzcd}
    \]
    From this diagram we conclude that $\Phi_X \colon \CH^3(X)_{\mathrm{alg}} \to J^5(X)$ is surjective.

    \smallskip
    To prove that $\Phi_X$ has finite kernel, we consider the commutative diagram
    \[
    \begin{tikzcd}
        \CH^3(X)_{\mathrm{alg}} \arrow{d}{[\widetilde{Z}']_*} \arrow{r}{\Phi_X} \arrow[dd, bend right=60,swap, two heads]{}{\cdot mm'} 
        & J^5(X) \arrow[dd, bend left=60, two heads]{}{\cdot mm'} \arrow{d}{[\widetilde{Z}']_*} \\
        \CH^1(\widetilde{T})_{\mathrm{alg}} \arrow{d}{\widetilde{i}_*} \arrow{r}{\simeq}  
        & \mathrm{Pic}^0(\widetilde{T}) \arrow{d}{\widetilde{i}_*} \\
        \CH^3(X)_{\mathrm{alg}} \arrow{r}{\Phi_X}  
        & J^5(X)
    \end{tikzcd}
    \]
    Let $\alpha \in \ker(\Phi_X)$. Since $\CH^3(X)_{\mathrm{alg}}$ is divisible, there exists $\beta \in \CH^3(X)_{\mathrm{alg}}$ such that $\alpha = mm'\beta$. This implies that $\Phi_X(\beta) \in J^5(X)[mm']$, and hence
    \[
        \alpha = mm'\beta = \widetilde{i}_* \circ \widetilde{Z}'_*(\beta) \ \in \ \widetilde{i}_*\big(\CH^1(\widetilde{T})_{\mathrm{alg}}[mm']\big).
    \]
    Since $\CH^1(\widetilde{T})_{\mathrm{alg}}[mm'] \simeq \mathrm{Pic}^0(\widetilde{T})[mm'] \simeq (\mathbb{Z}/mm'\mathbb{Z})^{2g}$ is a finite group, the kernel of $\Phi_X$ is finite.
\end{proof}

\begin{corollary}\label{coro:Walker}
    Let $X$ be a double cover of $\mathbb{P}^5$ branched along a smooth quartic hypersurface $B \subset \mathbb{P}^5$. Then
    \[
        \Phi_X \colon \CH^3(X)_{\mathrm{alg}} \ \xrightarrow{\ \simeq\ } \ J^5_W(X)
    \]
    (the Walker intermediate Jacobian), and $J^5_{\mathrm{alg}}(X) \simeq J^5(X)$. In particular, the generalized Hodge conjecture for $X$ in weight $3$ and level $1$ holds.
\end{corollary}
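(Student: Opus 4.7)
The plan is to derive the three assertions of the corollary from Proposition~\ref{prop:AJ-surjective} together with the explicit cycle-theoretic data encoded in the preceding decomposition of the diagonal.

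First, I would read off the second claim $J^5_{\mathrm{alg}}(X)\simeq J^5(X)$ essentially tautologically. By the definition of the algebraic intermediate Jacobian, $J^5_{\mathrm{alg}}(X)$ is the image of $\Phi_X|_{\CH^3(X)_{\mathrm{alg}}}\subseteq J^5(X)$, and Proposition~\ref{prop:AJ-surjective} shows precisely that this image is the full $J^5(X)$. Combined with the chain of natural inclusions $J^5_{\mathrm{alg}}(X)\hookrightarrow J^5_H(X)\hookrightarrow J^5(X)$ recorded in \S\ref{section:Chow}, this squeezes $J^5_H(X)=J^5(X)$ as well.

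Second, I would upgrade the surjection of Proposition~\ref{prop:AJ-surjective} to an isomorphism with the Walker intermediate Jacobian. The key ingredient is the identity $\widetilde i_* \circ [\widetilde Z']_* = mm'\cdot \mathrm{id}_{\CH^3(X)_{\mathrm{alg}}}$ from the proof of Proposition~\ref{prop:AJ-surjective}, which exhibits $\CH^3(X)_{\mathrm{alg}}$ as a summand, up to $mm'$-isogeny, of the abelian variety $\mathrm{Pic}^0(\widetilde T)$. This is exactly the weak representability of $\CH^3(X)_{\mathrm{alg}}$, and Murre's theorem then identifies the canonical representing abelian variety with $J^5_{\mathrm{alg}}(X)=J^5(X)$. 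The Walker intermediate Jacobian in this setting coincides with this representing abelian variety, so the finite kernel obtained in Proposition~\ref{prop:AJ-surjective} is forced to be trivial (any nontrivial torsion element would contradict the divisibility of $\CH^3(X)_{\mathrm{alg}}$ once identified with the Jacobian of an abelian variety), yielding $\Phi_X\colon\CH^3(X)_{\mathrm{alg}}\xrightarrow{\simeq} J^5_W(X)$.

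Third, the generalized Hodge conjecture follows from the same correspondence $\widetilde Z'\in \CH^3(X\times \widetilde T)$. Rationally, $\widetilde Z'$ induces maps of Hodge structures that, together with the identity $mm'\Delta_X=\widetilde i_*\widetilde Z'+\cdots$, exhibit $H^5(X,\Q)$ as a direct summand of $H^1(\widetilde T,\Q)$ shifted by the codimension $2$ of $\widetilde T$ in $X$. This shift places $H^5(X,\Q)$ inside $\operatorname{N}^2 H^5(X,\Q)$, which, since $H^5(X,\Q)$ is already Hodge-theoretically of level $1$ (only $(3,2)$ and $(2,3)$ components), is the content of the generalized Hodge conjecture in the stated range.

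The main obstacle is the middle step: verifying that weak representability, as extracted from the decomposition of the diagonal, suffices to pin down $\CH^3(X)_{\mathrm{alg}}$ \emph{on the nose} rather than merely up to isogeny. Here one must combine Murre's identification of the representing abelian variety with the divisibility of $\CH^3(X)_{\mathrm{alg}}$ and the finiteness of the kernel from Proposition~\ref{prop:AJ-surjective} to rule out any residual torsion, which is what makes the refined Abel--Jacobi map to the Walker intermediate Jacobian a genuine isomorphism.
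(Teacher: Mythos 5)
Your first and third steps are essentially sound. Reading off $J^5_{\mathrm{alg}}(X)\simeq J^5(X)$ from the surjectivity in Proposition~\ref{prop:AJ-surjective} is what the paper does as well (the paper routes it through Walker's factorization diagram, you route it through the remark that $\Phi_X(\CH^3(X)_{\mathrm{alg}})\simeq J^5_{\mathrm{alg}}(X)$; both are fine). Your coniveau argument for the generalized Hodge conjecture, using that $mm'\Delta_X$ acts on $\operatorname{H}^5(X,\Q)$ through the Gysin map from the $3$-dimensional support $\widetilde{T}$, is a clean direct argument and is consistent with the paper's.

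The middle step, however, has a genuine gap, and it sits exactly where you flagged the ``main obstacle.'' Three problems. First, Murre's theorem on algebraic representatives is a codimension-$2$ statement; for $\CH^3$ of a fivefold the existence and identification of a ``canonical representing abelian variety'' is not something you can invoke off the shelf. Second, the assertion that the Walker intermediate Jacobian ``coincides with this representing abelian variety'' is precisely the content that needs proof, not an input. Third, and most seriously, the divisibility argument you use to kill the finite kernel does not work: $\CH^3(X)_{\mathrm{alg}}$ and $J^5_W(X)$ are both divisible groups, and a divisible group admits surjections with arbitrary finite kernel onto another divisible group (any isogeny $A\to A/F$ of abelian varieties is an example), so ``a nontrivial torsion element would contradict divisibility'' is false. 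What is actually needed is injectivity of the (Walker) Abel--Jacobi map on torsion, and this is the external input the paper supplies: since $X$ is rationally connected, $\CH_0(X)$ is supported in dimension $<3$, and Suzuki's theorem then gives $\CH^3_{\mathrm{alg}}(X)_{\mathrm{tors}}\simeq (J^5_W(X))_{\mathrm{tors}}$; combined with Walker's factorization $\Phi_X=(J^5_W(X)\to J^5(X))\circ\Phi_W$ and the finiteness of $\ker(\Phi_X)$, this forces $\ker(\Phi_W)=0$ and hence $\Phi_W$ is an isomorphism. Without some such torsion statement your argument only produces an isogeny, not the claimed isomorphism.
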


\begin{proof}
    Recall that the Walker intermediate Jacobian is defined by
    \[
        J^5_W(X) := \frac{\text{N}^2\text{H}^5(X,\mathbb{C})}{\text{F}^2\text{N}^2\text{H}^5(X,\mathbb{C}) \ \oplus \ \text{N}^2\text{H}^5(X,\mathbb{Z})},
    \]
    together with the surjective map $\Phi_W \colon \CH^3(X)_{\mathrm{alg}} \to J^5_W(X)$.  
    The usual Abel--Jacobi map $\Phi_X$ factors through $\Phi_W$.

    Since $X$ is rationally connected, its zero-cycles are supported on a subvariety of dimension $< 3$. By \cite[Theorem~1.1]{Suz}, this implies
    \[
        \CH^3_{\mathrm{alg}}(X)_{\mathrm{tors}} \ \simeq \ (J^5_W(X))_{\mathrm{tors}}.
    \]
    Moreover, by \cite[Theorem~5.7(3)]{Wal}, we have a commutative diagram
    \[
    \begin{tikzcd}
        \CH^3(X)_{\mathrm{alg}} \arrow[r, two heads] \arrow[dd, hook]  
            & J^5_W(X) \arrow[d, two heads] \\
            & J^5_{\mathrm{alg}}(X) \arrow[d, hook] \\
        \CH^3(X)_{\mathrm{hom}} \arrow[r]  
            & J^5(X)
    \end{tikzcd}
    \]
    Combining this with Proposition~\ref{prop:AJ-surjective}, we deduce that the natural map $J^5_{\mathrm{alg}}(X) \to J^5(X)$ is an isomorphism.  
    On the other hand, we obtain $\CH^3(X)_{\mathrm{alg}} \simeq J^5_W(X)$, and in this case
    $\ker(\Phi_X) \ \simeq \ \ker\big(J^5_W(X) \to J^5_{\mathrm{alg}}(X)\big)$.
\end{proof}

\begin{corollary}\label{coro:coniveau2}
    For a smooth complex double quartic fivefold $X$, we have an isomorphism $\operatorname{CH}^3(X)_{\operatorname{alg}} \simeq J^5(X)$.
\end{corollary}

\begin{proof}
    Consider the isomorphism $J^5_{\mathrm{alg}}(X) \simeq J^5(X)$ and choose a curve $\Gamma \subset J^5(X)$ such that the map
    \[
        \text{H}_1(\Gamma,\mathbb{Z}) \ \longrightarrow\ \text{H}_1(J^5(X),\mathbb{Z}) \ \simeq\ \text{H}^5(X,\mathbb{Z})
    \]
    is surjective. By Poincaré duality, and since both $\text{H}_1(\Gamma,\mathbb{Z})$ and $\text{H}^5(X,\mathbb{Z})$ are torsion-free, we obtain a surjection
    \[
        \text{H}^1(\Gamma,\mathbb{Z}) \ \longrightarrow\ \text{H}^5(X,\mathbb{Z}),
    \]
    induced by an algebraic cycle $T \in \CH^3(\Gamma \times X)$. Following the argument of \cite[Proposition~19]{FT}, we have the commutative diagram
    \[
    \begin{tikzcd}
        \CH_0(\Gamma)_{\mathrm{alg}} \arrow{d}{[T]_*} \arrow{r}{\simeq} & J(\Gamma) \arrow[d, two heads, "f"] \\
        \CH^3(X)_{\mathrm{alg}} \arrow[r, two heads] & J^5(X)
    \end{tikzcd}
    \]
    Let $A := \ker(f)$. Then $A$ is an abelian subvariety of $J(\Gamma)$, and its image in $\ker(\Phi_X)$ is trivial because $A$ is divisible whereas $\ker(\Phi_X)$ is finite. It follows that the image of $\CH_0(\Gamma)_{\mathrm{alg}} \to \CH^3(X)_{\mathrm{alg}}$ is isomorphic to $J^5(X)$, giving a splitting
    \[
        \CH^3(X)_{\mathrm{alg}} \ \simeq\ J^5(X) \ \oplus\ \ker(\Phi_X).
    \]
    Since $\CH^3(X)_{\mathrm{alg}}$ is divisible, we conclude that $\CH^3(X)_{\mathrm{alg}} \simeq J^5(X)$.
\end{proof}

\begin{corollary}
For a smooth complex double quartic fivefold $X$, the motive $h_\et(X)$ admits a decomposition
\begin{align}\label{decompetale}
    h_\et(X)=\mathbf{1}\oplus \mathbf{L}\oplus \mathbf{L}^{\otimes 2}\oplus h_1(J^5(X))\otimes\mathbf{L}^{\otimes 2}\oplus \mathbf{L}^{\otimes 3}\oplus \mathbf{L}^{\otimes 4}\oplus \mathbf{L}^{\otimes 5}.
\end{align}
In particular, the Chow motive $h(X)$ is finite dimensional in the sense of Kimura.
\end{corollary}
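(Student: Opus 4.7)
The plan is to upgrade the multiplicative Chow--Künneth decomposition of $X$ already obtained in \S \ref{section:Other Fanos} to an isomorphism of integral étale Chow motives with the explicit building blocks appearing on the right-hand side of \eqref{decompetale}.

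First, I would extract the Tate summands. The projectors $\pi_X^i=\tfrac{1}{2}\,{}^t\Gamma_f\circ \pi_{\mathbb{P}^5}^i\circ \Gamma_f$ for $i<5$ constructed from the covering $f\colon X\to \mathbb{P}^5$ are orthogonal idempotents in $\operatorname{Corr}^0(X,X)$, and since étale correspondences obey the same formalism (see \cite{RoSo}), they lift to orthogonal idempotents in $\operatorname{Corr}^0_\et(X,X)$. For each even index $2j$ with $j\neq 3$, the results of \S \ref{section:Other Fanos} give $\CH_\et^{j}(X)\simeq \Z\cdot h^{j}$, and the projector $\pi_X^{2j}$ may be represented by the integral class $\tfrac{1}{2}\,h^{5-j}\times h^{j}$. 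The motive $(X,\pi_X^{2j})$ then has one-dimensional rational realisation $\Q(-j)$, and the intersection pairings with $h^{j}$ and $h^{5-j}$ produce mutually inverse integral étale correspondences identifying $(X,\pi_X^{2j})$ with $\mathbf{L}^{\otimes j}$.

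Second, for the middle piece $h^5_\et(X):=(X,\pi_X^5)$, I would invoke Theorem \ref{theodecomp}. Lemma \ref{bXcovering} ensures that the standard Lefschetz conjecture $B(X)$ holds, and the corollary immediately preceding \eqref{decompetale} establishes both the generalised Hodge conjecture in weight $3$ and level $1$ and the identification $J^3_{\alg}(X)\simeq J^3(X)$. Applying Theorem \ref{theodecomp} with $i=2$ therefore yields an inclusion of integral étale Chow motives
\[
    \iota\colon h_1(J^3(X))\otimes \mathbf{L}^{\otimes 2} \hookrightarrow h^+_\et(X),
\]
whose image is contained in $h^5_\et(X)$ and whose Betti realisation is the isomorphism $\operatorname{H}^1(J^3(X),\Q)(-2)\xrightarrow{\sim}\operatorname{H}^5(X,\Q)$ afforded by the Hodge level-one condition. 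The orthogonal complement of $\iota$ inside $h^5_\et(X)$ thus has trivial Betti realisation; appealing, as in \S \ref{section:Motives}, to the absence of phantom motives together with the Kimura finite dimensionality of $h_1(J^3(X))$, this complement must vanish. The homological isomorphism then lifts to an integral étale Chow-motivic isomorphism $h^5_\et(X)\simeq h_1(J^3(X))\otimes \mathbf{L}^{\otimes 2}$ via \cite[Proposition 3.1.7]{RoSo}.

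Combining the two previous paragraphs yields \eqref{decompetale}. Kimura finite dimensionality of $h(X)$ is then immediate: after rationalisation each Tate summand $\mathbf{L}^{\otimes j}$ is evenly finite dimensional, while $h_1(J^3(X))\otimes \mathbf{L}^{\otimes 2}$ is oddly finite dimensional because $h_1$ of an abelian variety is so by Kimura's theorem, and a direct sum of an evenly and an oddly finite dimensional motive is finite dimensional. The delicate point of the plan is closing the middle piece into a genuine isomorphism rather than a mere direct factor inclusion: this relies on the Hodge level-one condition forcing $\operatorname{H}^5(X,\Q)$ to be entirely accounted for by $\operatorname{H}^1(J^3(X))(-2)$, together with the no-phantom-motive principle used to promote the resulting homological equality to a Chow-motivic one.
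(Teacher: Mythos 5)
Your overall architecture (Tate summands from the Chow--K\"unneth projectors induced by the covering $f$, the middle piece from Theorem~\ref{theodecomp} via Lemma~\ref{bXcovering} and the preceding corollary on the generalized Hodge conjecture, finite dimensionality read off at the end) is the same as the paper's, which simply cites Theorem~\ref{theodecomp} and the previous corollary. However, there is one step where your argument is circular as written: you kill the orthogonal complement of $h_1(J^3(X))\otimes\mathbf{L}^{\otimes 2}$ inside $h^5_\et(X)$ by observing that it has trivial Betti realisation and then ``appealing to the absence of phantom motives.'' The non-existence of phantom motives is only known for motives that are finite dimensional in the sense of Kimura; the finite dimensionality of $h_1(J^3(X))$ does not transfer to the complement, and the finite dimensionality of $h(X)$ is precisely the conclusion you are trying to extract \emph{from} the decomposition. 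So at the moment you invoke the no-phantom principle, you have no licence to use it. (Everywhere the paper uses this principle, e.g.\ in the proof of the decomposition theorem of \S\ref{section:Motives}, finite dimensionality has been established beforehand from independent sources.)

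The gap is fixable, and the fix is the route the paper actually intends: the complement is eliminated not by a realisation argument but by the structural criterion of Vial (\cite[Theorem 4]{via}, underlying Theorem~\ref{theodecomp}), which produces a \emph{complete} system of mutually orthogonal projectors summing to $\Delta_X$ directly from the representability data established earlier in \S\ref{section:Other Fanos} --- namely $\CH^j(X)_\Q\simeq\Q$ for $j\neq 3$, $\CH^3(X)_{\alg}\simeq J^5(X)$ with finite (hence rationally trivial) $\operatorname{Griff}^3(X)$ --- via an iterated Bloch--Srinivas decomposition of the diagonal on $X\times X$. This yields $\Delta_X=\sum_j\pi_X^{2j}+\pi_{5,3}$ with no residual summand, so no phantom ever needs to be excluded; finite dimensionality and, a posteriori, the legitimacy of the no-phantom principle for $h(X)$ then follow from the decomposition exactly as in your last paragraph. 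You should replace the ``trivial realisation $\Rightarrow$ zero'' step by this appeal to the completeness of Vial's projectors (equivalently, to the fact that all of $\CH^*(X\times X)_\Q$, not merely $\CH^*(X)_\Q$, is controlled by the decomposition of the diagonal).
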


\begin{proof}
By Corollary \ref{coro:Walker}, together with Lemma \ref{bXcovering}, the assumptions of Theorem \ref{theodecomp} are satisfied. The finite dimensionality of $h(X)$ follows from the decomposition \eqref{decompetale}, after changing coefficients to $\Q$ and observing that it is a direct sum of finite-dimensional motives (since $1$-motives are finite dimensional).
\end{proof}

In order to analyse the comparison map $\kappa^3:\CH^3(X) \to \CH^3_\et(X)$ we will need further information of $\mathrm{Griff}^3(X)$ and, more generally, on unramified cohomology groups. For this, we recall some facts about the unramified cohomology of $X$:

\begin{itemize}
    \item $\text{H}^i_{\text{nr}}(X,\mathbb{Q}/\mathbb{Z}) = 0$ for $i =1 $ (since $\operatorname{H}^1_{\text{nr}}(X,\mathbb{Q}/\mathbb{Z}) \simeq \operatorname{H}^1_B(X,\Q/\Z)$), for $i=2$ (see \cite[Proposition 2.22]{Voi19}), for $i=3$ (as the Integral Hodge Conjecture for codimension 2 cycles holds for $X$), and for all $i > 5$. 
    \item By the decomposition of the diagonal, there exists $N > 0$ such that for every abelian group $A$, the group $\text{H}^i_{\text{nr}}(X,A)$ is $N$-torsion for all $i > 0$. In particular, $\text{H}^i_{\text{nr}}(X,\mathbb{Z}) = 0$ for all $i > 0$.
    \item By the same argument (see, for instance, \cite[Proposition~3.3]{CTV}), $\text{H}^p(X,\mathcal{H}^5(A))$ is $N$-torsion for all $p < 5$. Also thanks to  \cite[Théorème 3.11]{CTV}, we have $\text{H}^2(X,\mathcal{H}^5(\mathbb{Q}/\mathbb{Z})) = 0$.
\end{itemize}

\begin{prop}
    Let $X$ be a smooth projective double cover of $\mathbb{P}^5$. Then, the torsion group $\mathrm{Griff}^3(X)$ is isomorphic to $ \operatorname{H}^4_{\text{nr}}(X,\Q/\Z)$.
\end{prop}

\begin{proof}
    Consider the following commutative diagram with exact rows:
    \[\adjustbox{max width=\textwidth}{%
\begin{tikzcd}[column sep=small]
    \text{H}^2(X,\mathcal{H}^3(\mathbb{Z})) 
        \arrow[r, "\alpha", shorten >=2pt, shorten <=2pt] \arrow{dr} 
        & \text{H}^5(X,\mathbb{Z}) 
            \arrow[r, shorten >=2pt, shorten <=2pt] 
            & \text{H}^1(X,\mathcal{H}^4(\mathbb{Z})) 
                \arrow[r, "d_2", shorten >=1pt, shorten <=1pt] 
                & \text{H}^3(X,\mathcal{H}^3(\mathbb{Z})) 
                    \arrow[r, shorten >=2pt, shorten <=2pt] 
                    & \text{H}^6(X,\mathbb{Z}) \\
    & \text{N}^2\text{H}^5(X,\mathbb{Z}) \arrow[u, two heads, "\beta"] & & &
\end{tikzcd}
}
    \]
    Since $\text{H}^3(X,\mathcal{H}^3(\mathbb{Z})) \simeq \CH^3(X)/\CH^3(X)_{\mathrm{alg}}$, it follows that
\begin{align*}
\mathrm{Griff}^3(X)
&\;\simeq\;
\ker\!\left(\text{H}^3(X,\mathcal{H}^3(\mathbb{Z})) \to \text{H}^6(X,\mathbb{Z})\right) \\[0.4em]
&\;\simeq\;
\mathrm{coker}\!\left(\text{H}^5(X,\mathbb{Z}) \to \text{H}^1(X,\mathcal{H}^4(\mathbb{Z}))\right).
\end{align*}
    The surjectivity of $\beta$ (see the proof of Corollary \ref{coro:coniveau2}) implies that $\alpha$ is also surjective, yielding the isomorphism
    \[
        \mathrm{Griff}^3(X) \ \simeq\ \text{H}^1(X,\mathcal{H}^4(\mathbb{Z})).
    \]
    On the other hand, from the short exact sequence
    \[
        0 \ \to\ \text{H}^4_{\text{nr}}(X,\mathbb{Z}) \otimes \mathbb{Z}/mm' \ \to\ \text{H}^4_{\text{nr}}(X,\mathbb{Z}/mm'\mathbb{Z}) \ \to\ \text{H}^1(X,\mathcal{H}^4(\mathbb{Z}))[mm'] \ \to\ 0
    \]
     Therefore $\mathrm{Griff}^3(X)$ is torsion isomorphic to $ \text{H}^4_{\text{nr}}(X,\Q/\Z)$. Since both groups are torsion (see Proposition \ref{prop:GriffTor}), we obtain the result.
\end{proof}

We consider a quartic double fivefold $f : X \xrightarrow{2:1} \mathbb{P}^5$ with branch locus $B \in |\mathcal{O}_{\mathbb{P}^5}(4)|$. Let $\varepsilon : \widetilde{X} = \mathrm{Bl}_\ell(X) \to X$ denote the blow-up of $X$ along a line $\ell \subset X$ (see Definition \ref{defi:lines}).

\begin{prop}\label{prop:H5nr}
    For a double cover $f : X \xrightarrow{2:1} \mathbb{P}^5$ ramified over a quartic $B$, one has $\operatorname{H}^5_{\operatorname{nr}}(X,\mathbb{Q}/\mathbb{Z}(5)) = 0$.
\end{prop}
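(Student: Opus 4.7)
The plan is to apply the refined decomposition of the diagonal established in the preceding corollary and show that every summand acts trivially on $H^5_{\operatorname{nr}}(X,\mathbb{Q}/\mathbb{Z}(5)) = H^0(X,\mathcal{H}^5(\mathbb{Q}/\mathbb{Z}(5)))$, combining this with a divisibility or alternative-decomposition argument to remove the torsion multiplier. The central input is the Bloch--Ogus vanishing $\mathcal{H}^q|_Y=0$ for smooth $Y$ of dimension $<q$.

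First, I will make each summand of
\[
m\,\Delta_X \;=\; m(x\times X)\;+\;m(\ell \times h)\;+\;Z' \ \in \ \CH^5(X\times X),
\]
with $Z'$ supported on $X\times T$, $\dim T=3$, act on a class $\alpha\in H^0(X,\mathcal{H}^5(\mathbb{Q}/\mathbb{Z}(5)))$. The term $(x\times X)_\ast$ factors through the restriction $\alpha|_{\{x\}}\in H^0(\{x\},\mathcal{H}^5)=0$. The term $(\ell\times h)_\ast$ factors through $\alpha|_\ell \in H^0(\mathbb{P}^1,\mathcal{H}^5)=0$ since $\mathcal{H}^5$ vanishes on any curve. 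For $Z'$, I write $Z'=\iota_\ast\widetilde{Z}'$ for a desingularization $\iota\colon\widetilde{T}\to X$ with $\widetilde{Z}'\in\CH^3(X\times\widetilde{T})$; then the action of $Z'_\ast$ on $H^0(X,\mathcal{H}^5)$ involves a Gysin push-forward from $X\times\widetilde{T}$ to $\widetilde{T}$ whose target lies in $H^{-2}(\widetilde{T},\mathcal{H}^3)=0$ for purely degree reasons (pull-back gives no shift, cup with a codimension $3$ cycle class shifts by $+3$ in each index, Gysin push with fibers of dimension $5$ shifts by $-5$ in each index). Consequently $m\cdot\operatorname{id}$ is the zero endomorphism of $H^5_{\operatorname{nr}}(X,\mathbb{Q}/\mathbb{Z}(5))$, so this group is $m$-torsion.

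To upgrade $m$-torsion to zero, I will split the group into $\ell$-primary components. Components with $\ell\nmid m$ vanish immediately. For primes $\ell\mid m$, I will exploit the blow-up $\widetilde{X}=\operatorname{Bl}_{\ell}(X)$ along a line and the associated projection $\widetilde{X}\to\mathbb{P}^3$ (whose generic fibers are del Pezzo surfaces of degree $2$, rational over $\mathbb{C}$) to build a second decomposition of the diagonal with multiplier coprime to $\ell$, using the birational invariance of unramified cohomology to transport the computation between $X$ and $\widetilde{X}$. The main obstacle is precisely this last step: producing an independent decomposition of $\Delta_X$ whose multiplier is prime to $m$, so that combining the two torsion bounds gives the vanishing. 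An alternative route would exploit the divisibility of the function-field cohomology $H^5(\mathbb{C}(X),\mathbb{Q}/\mathbb{Z}(5))$, which follows from $\operatorname{cd}(\mathbb{C}(X))=5$, to divide unramified classes by $m$ inside $H^5(\mathbb{C}(X),\mathbb{Q}/\mathbb{Z}(5))$; ensuring that such a division remains unramified is the delicate point and requires a careful residue-compatibility check along the Gersten complex.
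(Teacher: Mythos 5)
The first half of your argument is sound: letting $m\Delta_X = m(x\times X)+m(\ell\times h)+Z'$ act on $\operatorname{H}^0(X,\mathcal{H}^5(\mathbb{Q}/\mathbb{Z}(5)))$ and invoking the Bloch--Ogus vanishing $\mathcal{H}^q|_Y=0$ for smooth $Y$ with $\dim Y<q$ (together with the fact that the $Z'$-term factors through $\operatorname{H}^{-2}(\widetilde{T},\mathcal{H}^3)=0$) does show that $\operatorname{H}^5_{\operatorname{nr}}(X,\mathbb{Q}/\mathbb{Z}(5))$ is killed by $m$. But this is where the proof stops being a proof of the stated vanishing: the $\ell$-primary components for primes $\ell\mid m$ are untouched, and there is no reason to expect $m=1$ here ($X$ is unirational but not known to be rational, and the paper deliberately carries the factors $m,m'$ through all the subsequent statements). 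Neither of your proposed completions closes this gap. A second diagonal decomposition with multiplier coprime to $m$ is not supplied by the fibration $\mathrm{Bl}_\ell(X)\to\mathbb{P}^3$ you cite: its generic fiber is a degree-two surface over the non-closed field $\mathbb{C}(\mathbb{P}^3)$, its rationality over $\mathbb{C}$ is irrelevant, and any multiplier extracted from it would naturally be even, hence possibly not coprime to $m$. The divisibility route fails for exactly the reason you flag: although $\operatorname{H}^5(\mathbb{C}(X),\mathbb{Q}/\mathbb{Z}(5))$ is divisible because $\operatorname{cd}(\mathbb{C}(X))=5$, dividing a class there gives no control on residues, and finite-exponent nontrivial unramified cohomology groups of unirational varieties are precisely what the Artin--Mumford and Colliot-Th\'el\`ene--Ojanguren examples produce, so ``finite exponent'' cannot be upgraded to ``zero'' by general nonsense.

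The paper's proof is entirely different and supplies exactly the missing input geometrically. Writing $X=V(y_0^2-y_1^2q)\subset\mathbb{P}(\mathcal{O}_{\mathbb{P}^5}\oplus\mathcal{O}_{\mathbb{P}^5}(2))$ and blowing up a line $\ell\subset X$, the induced morphism $\Phi:\mathrm{Bl}_\ell(X)\to\mathbb{P}(1^4,2)$ is a $\mathbb{P}^1$-fibration over a rational fourfold, so by birational invariance of $\operatorname{H}^5_{\operatorname{nr}}(-,\mu_n^{\otimes 5})$ one gets
\[
\operatorname{H}^5_{\operatorname{nr}}(X,\mu_n^{\otimes 5})\simeq \operatorname{H}^5_{\operatorname{nr}}(\mathrm{Bl}_\ell(X),\mu_n^{\otimes 5})\hookrightarrow \operatorname{H}^5_{\operatorname{nr}}(\mathbb{P}^1_K,\mu_n^{\otimes 5})=0,
\]
the last group vanishing because $K=\mathbb{C}(\mathbb{P}(1^4,2))\simeq\mathbb{C}(\mathbb{P}^4)$ has cohomological dimension $4<5$. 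If you want to salvage your approach, you would need to import precisely this kind of fibration-theoretic argument at the primes dividing $m$; as written, your proposal establishes only that the group has finite exponent, not that it vanishes.
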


\begin{proof}
Let $[x_0 : \ldots : x_5]$ be homogeneous coordinates on $\mathbb{P}^5$, and set 
\[
    P := \mathbb{P}\big(\mathcal{O}_{\mathbb{P}^5} \oplus \mathcal{O}_{\mathbb{P}^5}(2)\big)
\]
with fiber coordinates $[y_0 : y_1]$. The variety $X$ is given by $X \simeq V(s) \subset P$ with $s \in |\mathcal{O}_{P}(2)|$. Under these assumptions, we have two fibrations
\[
\begin{tikzcd}
    P \arrow{r}{\phi} \arrow{d}{f} & \mathbb{P}(1^5,2) \\
    \mathbb{P}^5 &
\end{tikzcd}
\]
and the section $s$ is of the form  
\[
    s = y_0^2 - y_1^2 q(x_0,\ldots,x_5).
\]
In particular, we can consider the line $\ell \subset X$ defined by the zero locus\footnote{Since $s$ is in the ideal defining the line $\ell$, we may assume that it is of the form $s=y_0^2 - y_1^2(h(x_0,\ldots,x_5)^2 + x_2f_2(x_0,\ldots,x_5)+\ldots+x_5f_5(x_0,\ldots,x_5)$, where the $f_i$ are homogeneous cubic polynomials.}
\[
    \{ y_0 - y_1 h(x_0,\ldots,x_5) = x_2 = \cdots = x_5 = 0 \},
\]
and we have a rational map 
\[
\begin{aligned}
  f_\ell : X & \dashrightarrow \mathbb{P}(1^4,2) \\
  ([x],[y]) & \longmapsto [x_2 : \ldots : x_5 : y_0 - y_1 h(x_0,\ldots,x_5)].
\end{aligned}
\]
Blowing up $X$ along $\ell$ we obtain $\mathrm{Bl}_\ell(X)$, which is given by
\[
 \left\{ ([x],[y],[t_0:\ldots:t_4]) \in P \times \mathbb{P}^4\, \middle|\, y_0^2 - y_1^2 q(x_0,\ldots,x_5) = 0,\ t_i e_j - t_j e_i = 0 \right\},
\]
where 
\[
    e_i = 
    \begin{cases}
        y_0 - y_1 h(x_0,\ldots,x_5), & i = 1, \\
        x_i, & \text{otherwise}.
    \end{cases}
\]
This lifts to a morphism
\[
\begin{tikzcd}
    \mathrm{Bl}_\ell(X) \arrow[r, "\Phi"] \arrow[d, "\varepsilon", swap] & \mathbb{P}(1^4,2) \\
    X \arrow[ru, dashed, "f_\ell", swap] &
\end{tikzcd}
\]
where $\Phi : \mathrm{Bl}_\ell(X) \to \mathbb{P}(1^4,2)$ is a $\mathbb{P}^1$-fibration. The generic fiber is $\mathbb{P}^1$ over the function field $K = \mathbb{C}(\mathbb{P}(1^4,2)) \simeq \mathbb{C}(\mathbb{P}^4)$. Since $\text{H}^5_{\text{nr}}(-,\mu_n^{\otimes 5})$ is a birational invariant, we have
\[
     \text{H}^5_{\text{nr}}(X,\mu_n^{\otimes 5}) \ \simeq\  \text{H}^5_{\text{nr}}(\widetilde{X},\mu_n^{\otimes 5}) \ \hookrightarrow\ \text{H}^5_{\text{nr}}(\mathbb{P}^1/K,\mu_n^{\otimes 5}) = 0,
\]
and the result follows.
\end{proof}

\begin{remark}
    The morphism $\Phi$ is a $\mathbb{P}^1$-fibration but not a $\mathbb{P}^1$-bundle; otherwise, $\mathrm{Bl}_\ell(X)$ would be singular. More precisely, the fiber of $\Phi$ over the only singular point of $\mathbb{P}(1^4,2)$ in non-reduced. In particular, we cannot deduce from this construction the rationality of $X$ (cf. \cite{Pro18} for further discussion on the rationality of conic bundles). 
\end{remark}

\begin{prop}\label{prop:H4nr}
    The group $\operatorname{H}^4_{\operatorname{nr}}(X,\mathbb{Q}/\mathbb{Z}(4))$ vanishes.
\end{prop}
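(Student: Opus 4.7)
The plan is to extend the birational/fibration argument of the previous proposition. For $\operatorname{H}^5_{\operatorname{nr}}$ it was sufficient to invoke the cohomological dimension of $K=\mathbb{C}(\mathbb{P}(1^4,2))$, but in degree $4$ one must additionally exploit that $K\simeq \mathbb{C}(\mathbb{P}^4)$ is the function field of a rational variety, since $\operatorname{H}^4(K,\mu_n^{\otimes 4})$ is not \emph{a priori} zero.

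First, the birational invariance of unramified cohomology reduces the problem to showing that $\operatorname{H}^4_{\operatorname{nr}}(\widetilde{X},\mu_n^{\otimes 4})=0$ for every $n\geq 1$; taking the direct limit then yields the result with $\mathbb{Q}/\mathbb{Z}(4)$ coefficients. The $\mathbb{P}^1$-fibration $\Phi:\widetilde{X}\to \mathbb{P}(1^4,2)$ induces the tower of function fields
\[
    \mathbb{C}\ \subset\ K=\mathbb{C}(\mathbb{P}(1^4,2))\ \subset\ K(t)=\mathbb{C}(\widetilde{X}).
\]
Given a class $\alpha\in \operatorname{H}^4_{\operatorname{nr}}(\widetilde{X},\mu_n^{\otimes 4})\subset \operatorname{H}^4(K(t),\mu_n^{\otimes 4})$, its residues at the horizontal divisors of $\widetilde{X}$ (those dominating the base) correspond to residues at the closed points of the generic fiber $\mathbb{P}^1_K$, all of which must vanish. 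By Faddeev's residue sequence for the purely transcendental extension $K(t)/K$, this forces $\alpha$ to lie in the summand $\operatorname{H}^4(K,\mu_n^{\otimes 4})\subset \operatorname{H}^4(K(t),\mu_n^{\otimes 4})$.

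Next, I will argue that $\alpha$, viewed in $\operatorname{H}^4(K,\mu_n^{\otimes 4})$, is already unramified over $\mathbb{C}$. Any divisorial valuation $v'$ of $K/\mathbb{C}$ arises from an irreducible divisor $E\subset \mathbb{P}(1^4,2)$, whose proper transform $D\subset \widetilde{X}$ is a vertical divisor with associated valuation $v_D$ extending $v'$. Since $\Phi|_D$ has generically one-dimensional fibers, the residue-field extension $\kappa(v')\hookrightarrow \kappa(v_D)$ is generically purely transcendental of degree one, and the residue $\partial_{v_D}(\alpha)$ equals the inflation of $\partial_{v'}(\alpha)$ along this extension. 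The vanishing of $\partial_{v_D}(\alpha)$ --- which holds because $\alpha$ is unramified on $\widetilde{X}$ --- therefore forces the vanishing of $\partial_{v'}(\alpha)$, by the injectivity of inflation along a purely transcendental extension. Hence $\alpha\in \operatorname{H}^4_{\operatorname{nr}}(K/\mathbb{C},\mu_n^{\otimes 4})$.

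Finally, the rationality of $\mathbb{P}(1^4,2)$ --- equivalently, the fact that $K\simeq \mathbb{C}(\mathbb{P}^4)$ is purely transcendental over $\mathbb{C}$ --- together with the birational invariance of unramified cohomology yields $\operatorname{H}^4_{\operatorname{nr}}(K/\mathbb{C},\mu_n^{\otimes 4})\simeq \operatorname{H}^4_{\operatorname{nr}}(\mathbb{P}^4,\mu_n^{\otimes 4})=0$. Therefore $\alpha=0$, which completes the argument. The main technical obstacle will be the careful residue bookkeeping on $\widetilde{X}$, separating horizontal from vertical divisors and justifying the inflation step on residue fields along the generic $\mathbb{P}^1$-fibers; the new geometric input --- rationality of the base of the fibration --- is precisely what replaces the cohomological dimension argument used in the proof of $\operatorname{H}^5_{\operatorname{nr}}(X,\mathbb{Q}/\mathbb{Z}(5))=0$.
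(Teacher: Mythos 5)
Your argument diverges from the paper's at its central step, and that step has a genuine gap. Everything after the reduction to the generic fibre rests on the assertion that $\Phi\colon\widetilde{X}\to\mathbb{P}(1^4,2)$ has generic fibre $\mathbb{P}^1_K$, so that $\mathbb{C}(\widetilde{X})\simeq K(t)$ and Faddeev's exact sequence applies. Since $K\simeq\mathbb{C}(\mathbb{P}^4)$ is purely transcendental over $\mathbb{C}$, this assertion is literally the statement that $X$ is rational, in which case $\operatorname{H}^i_{\operatorname{nr}}(X,\mathbb{Q}/\mathbb{Z}(j))=0$ for all $i>0$ with no residue bookkeeping at all; the fact that the paper goes on to invoke quadric fibrations and Kahn--Sujatha should already signal that this premise cannot be taken at face value. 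Concretely, the fibre of $f_\ell$ over a general point of $\mathbb{P}(1^4,2)$ sits inside the degree-two del Pezzo surface $S=f^{-1}(\Pi)$ (for $\Pi\supset\ell_0$ a $2$-plane) as the curve residual to $\ell$ in a member of $|f^*\mathcal{O}_{\Pi}(2)|=|-2K_S|$; its class is $-2K_S-\ell$, which has arithmetic genus one. For a genus-one generic fibre $E/K$ there is no Faddeev sequence, and the implication ``residues at the closed points of the generic fibre vanish $\Rightarrow\alpha$ comes from $\operatorname{H}^4(K,\mu_n^{\otimes 4})$'' fails: $\operatorname{H}^*_{\operatorname{nr}}(K(E)/K)$ is strictly larger than the image of $\operatorname{H}^*(K)$ in general. (Your later step of checking only the valuations coming from divisors on the fixed model $\mathbb{P}(1^4,2)$ is also imprecise, though the Gauss-extension argument you sketch does repair that point.)

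The paper's proof is built precisely to sidestep this. It composes with the projection $\mathbb{P}(1^4,2)\dashrightarrow\mathbb{P}^3$ so as to fibre $\widetilde{X}$ over a \emph{three}-dimensional base: then $\operatorname{H}^4(\mathbb{C}(\mathbb{P}^3),\mu_n^{\otimes 4})=0$ for reasons of cohomological dimension alone, and the only input required about the generic fibre $Q$ is the surjectivity of $\operatorname{H}^4(K,\mu_n^{\otimes 4})\to\operatorname{H}^4_{\operatorname{nr}}(Q/K,\mu_n^{\otimes 4})$ supplied by Kahn--Sujatha. No rational point on the fibre, no descent of the class to the base, and no rationality statement for $X$ is needed. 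To keep your structure you would have to either lower the base dimension to $3$ and then control the unramified cohomology of the resulting higher-dimensional generic fibre over $K$ (which is exactly what the citation of [KS] accomplishes), or establish the surjectivity of $\operatorname{H}^4(K)\to\operatorname{H}^4_{\operatorname{nr}}$ for the actual genus-one generic fibre of $\Phi$, which does not hold in the generality you need.
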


\begin{proof}
    Let $P = \mathbb{P}(\mathcal{O}_{\mathbb{P}^5} \oplus \mathcal{O}_{\mathbb{P}^5}(2))$, and let $\ell \subset X$ be a line in $X$. This line maps to a line $\ell_0 \subset \mathbb{P}^5$ under the projection $p : P \to \mathbb{P}^5$. We have a diagram
    \[
\begin{tikzcd}
X \arrow[swap]{d}{f} \arrow[rd, dashed, "\eta"] & \\
\mathbb{P}^5 \arrow[r, dashed, "f_{\ell_0}", swap] & \mathbb{P}^3
\end{tikzcd}
\]
Fix coordinates so that $\ell_0 = \{ x_2 = \cdots = x_5 = 0 \} \subset \mathbb{P}^5$. Then $f_{\ell_0}([x]) = [x_2 : x_3 : x_4 : x_5]$, and the blow-up of $X$ along $\ell$ resolves $\eta$ to a morphism
\[
    \Phi : \mathrm{Bl}_\ell(X) \to \mathbb{P}^3
\]
whose fibers are quadrics given by $y_0^2 - y_1^2 F(x_0,x_1) = 0$, where $F$ is a homogeneous polynomial of degree $4$. If $K = \mathbb{C}(\mathbb{P}^3)$, then for every $n \in \mathbb{N}^{\geq 1}$ there is a surjective morphism
\[
    \text{H}^4_{\et}(K,\mu_{n}^{\otimes 4}) \ \longrightarrow\ \text{H}^4(Q/K,\mu_n^{\otimes 4})
\]
(see \cite[Theorem~3]{KS}). Moreover, $\text{H}^4_{\et}(K,\mu_{n}^{\otimes 4}) = 0$ because $\mathbb{C}(\mathbb{P}^3)$ has cohomological dimension $3$, and $\text{H}^4_{\text{nr}}(X,\mu_n^{\otimes 4})$ embeds into $\text{H}^4_{\et}(K,\mu_{n}^{\otimes 4})$. The result follows.
\end{proof}

Let $X$ be a smooth double quartic fivefold and let $\Phi^\et_X:\CH^3_\et(X)_{\hom}\to J^5(X)$ be the étale Abel-Jacobi map. We have that both $\CH^3_\et(X)_{\hom}$ and $J^5(X)$ are divisible groups, thus they are zero after tensoring with $\Q/\Z$. If we set $A:=\CH^3_\et(X)_{\hom}$, we obtain a commutative diagram
\[
\begin{tikzcd}
    0 \arrow{r}& A_{\tors}\arrow{r}\arrow{d}{\simeq}& A_{\hom}\arrow{d}\arrow{r}&  A_\Q \arrow{r}\arrow{d}{\simeq}& 0 \\
    0 \arrow{r}& J^5(X)_{\tors}\arrow{r}& J^5(X)\arrow{r}& J^5(X)_\Q\arrow{r}& 0,
\end{tikzcd}
\]
where the right vertical arrow is an isomorphism because of Corollary \ref{coro:coniveau2}. Thus, $\CH_\et^3(X)_{\hom} \simeq J^5(X)$.
If we put together the results about the homologically trivial part of $\CH^3_\et(X)$ we have that $\CH_\et^3(X)$ fits in a short exact sequence 
    \[
        0 \ \to\ J^5(X) \ \to\ \CH^3_{\mathrm{\acute{e}t}}(X) \ \to\ \mathbb{Z} \ \to\ 0.
    \]
    So now let us focus in the kernel and cokernel of the map $\kappa^3:\CH^3(X)\to \CH_\et^3(X)$. Since $\kappa^3_\Q:\CH^3(X)_\Q\to \CH^3_\et(X)_\Q$ is an isomorphism, we have that both $\operatorname{ker}(\kappa^3)$ and $\operatorname{coker}(\kappa^3)$ are torsion groups, and we analyse them separately:

\begin{lemma}\label{lemma:ker(k3)}
    The kernel of $\kappa^3:\CH^3(X)\to \CH_\et^3(X)$ is trivial.
\end{lemma}

\begin{proof}
    By \cite[Proposition 5.1(c)]{RS}, $\ker(\kappa^3)\cong \ker(\Phi_X)_{\operatorname{tors}}$, where $\Phi_X:\operatorname{CH}^3(X)_{\operatorname{hom}} \to J^5(X)$ is the Abel-Jacobi map. From this, and the following commutative diagram
    \[
    \begin{tikzcd}
        \CH^3(X)_{\alg} \arrow{r}{\simeq} \arrow[d,hook]& J^5(X) \\
          \CH^3(X)_{\hom} \arrow[ur,swap,"\Phi_X"]&
    \end{tikzcd}
    \]
    and thus we deduce that $\ker(\Phi_X)\cong \operatorname{CH}^3(X)_{\operatorname{hom}}\slash \operatorname{CH}^3(X)_{\operatorname{alg}} = \operatorname{Griff}^3(X)$. It follows that the kernel of $\kappa^3$ is precisely $\mathrm{Griff}^3(X) \simeq \text{H}^4_{\text{nr}}(X,\mathbb{Q}/\mathbb{Z})$. The results follows from Proposition \ref{prop:H4nr}. 
\end{proof}

    Regarding $\operatorname{coker}(\kappa^3)$, we firstly have to mention that one has an isomorphism of groups 
    $\mathrm{coker}(\kappa^3) \ \simeq\ \mathbb{H}^6_{\mathrm{Zar}}\!\left(X, \tau_{\geq 6} R\pi_* \mathbb{Z}_X(3)\right)$, and from the spectral sequence
        \[
            E^{p,q}_2 := \text{H}^p_{\mathrm{Zar}}\!\left(X, R^q \tau_{\geq 6} R\pi_* \mathbb{Z}_X(3)_{\mathrm{\acute{e}t}}\right)
            \ \Longrightarrow\ \mathbb{H}^{p+q}_{\mathrm{Zar}}\!\left(X, \tau_{\geq 6} R\pi_* \mathbb{Z}_X(3)_{\mathrm{\acute{e}t}}\right)
        \]
        one obtains
        \[
            \text{H}^p_{\mathrm{Zar}}\!\left(X, R^q \tau_{\geq 5} R\pi_* \mathbb{Z}_X(3)_{\mathrm{\acute{e}t}}\right) \ \simeq\ 
            \begin{cases}
                \text{H}^p_{\mathrm{Zar}}\!\left(X, \mathcal{H}^{q-1}_{\mathrm{\acute{e}t}}(\mathbb{Q}/\mathbb{Z}(3))\right), & q \geq 5, \\[0.3em]
                0, & \text{otherwise}.
            \end{cases}
        \]
    We have a short exact sequence
    \[
    0 \to E_\infty^{1,5} \to \operatorname{coker}(\kappa^3)\to E_\infty^{0,6}\to 0,
    \]
    where, since the spectral sequence converges at the second page, we have $E_\infty^{0,6} \cong E_2^{0,6} = \operatorname{H}^5_{\operatorname{nr}}(X,\mathbb{Q}\slash \mathbb{Z}(3))$ and this group vanishes by geometric reasons, as we have proved in Proposition \ref{prop:H5nr}.

    Thus, we have 
    \[
    \operatorname{coker}(\kappa^3)\cong E_\infty^{1,5} \cong E_2^{1,5} = \operatorname{H}^1_{\operatorname{Zar}}(X,\mathcal{H}_{\et}^4(\mathbb{Q}\slash \mathbb{Z}(3)).
    \]
    By \cite[Lemma 6]{FT} we have a short exact sequence in the Zariski topology
    \[
    0 \to \operatorname{H}^1(X,\mathcal{H}^4(\mathbb{Z}))\otimes \mathbb{Q}\slash \mathbb{Z}\to \operatorname{H}^1(X,\mathcal{H}^4(\mathbb{Q}\slash \mathbb{Z}))\to \operatorname{H}^2(X,\mathcal{H}^4(\mathbb{Z}))_{\text{tors}}\to 0,
    \]
    and by (the same proof of) \cite[Lemma 21]{FT}, we have that $\operatorname{H}^1(X,\mathcal{H}^4(\mathbb{Z}))\otimes \mathbb{Q}\slash \mathbb{Z} = 0$. Thus, $\operatorname{coker}(\kappa^3)\cong \operatorname{H}^2(X,\mathcal{H}^4(\mathbb{Z}))$. By using the spectral sequence 
    \begin{align*}
        E^{p,q}_2:=H^p(X,\mathcal{H}^q(\Z))\Longrightarrow H^{p+q}(X,\Z)
    \end{align*}
    we get a short exact sequence
    \begin{align*}
        0 \to \CH^3(X)/\alg \xrightarrow{\operatorname{cl}_{\operatorname{alg}}^3} H^6_B(X,\Z)\to \operatorname{H}^2(X,\mathcal{H}^4(\mathbb{Z}))\to 0.
    \end{align*}
that we analyse in the following:
\begin{lemma}\label{lemma:coker(k3)}
    With the above notation, we have that $\operatorname{cl}_{\operatorname{alg}}^3$ is surjective. Thus, $\operatorname{coker}(\kappa^3)=0$.
\end{lemma}
\begin{proof}
    By considering general hyperplane sections in $\mathbb{P}^5$ we can produce a smooth surface of degree $4$ in the branch locus $B\subset \mathbb{P}^5$ of $\pi$, from which we deduce the existence of a surface of degree $8$ on $X$, so we only have to prove that there exists a surface of odd degree.  Let us consider the cubic scroll $S\subset \mathbb{P}^4\simeq \{x_5=0\}\subset \mathbb{P}^5$ given by the equations
    \begin{align*}
        q_1:=x_0x_2-x_1,\ q_2:=x_1x_4-x_2x_3, \ q_3=x_0x_4-x_1x_3
    \end{align*}
     We have that $X$ is defined by the the equation $y_0^2-y_1^2q(x_0,\ldots,x_5)$ where $q$ is a polynomial of degree 4. 
     Let $S_i\subset \C[x_0\ldots,x_5]$ be the $i$-th graded piece
     \begin{align*}
         \varphi:S_3\oplus S_2^{\oplus 4}\oplus S_1 &\to S_4 \\
         (g,r_0,\ldots,r_3)&\mapsto  x_5 g + x_5^2 r_0 + x_5^3 \ell + r_1q_1+r_2q_2+r_3q_3.
     \end{align*}
     By a dimension counting on the vector space of homogeneous polynomials of degree 4 in 5 variables, we have that there an element $(g,r_0,\ldots,r_3) \in S_3\oplus S_2^{\oplus 4}\oplus S_1 $ such that 
    \begin{align*}
        q=x_5 g + x_5^2 r_0 + x_5^3 \ell + r_1q_1+r_2q_2+r_3q_3
    \end{align*}
    Let $Y$ be the surface contained in $X$ defined by the system  $y_0=x_5=q_1=q_2=q_3=0$. The restriction $f:X\to \mathbb{P}^5$ to $Y$ defines an isomorphism $f|_Y:Y\to S$. Since we have two algebraic surfaces of coprime degree, we deduce the existence of a codimension 3 cycle of degree $1$. The result follows, since $H_B^6(X,\mathbb{Z})\simeq \mathbb{Z}$.
\end{proof}

\begin{proof}[Proof of Theorem~\ref{teo:IHC-codim3}] It follows from Lemma \ref{lemma:ker(k3)} and Lemma \ref{lemma:coker(k3)} that $\kappa^3:\CH^3(X)\xrightarrow{\simeq} \CH_\et^3(X)$ is an isomorphism. And thus the result is an immediate consequence of \cite{RS}: since $X$ is odd dimensional and there is a finite morphism onto $\mathbb{P}^5$, the usual rational Hodge conjecture holds for such $X$. Thus, \cite[Theorem 1.1]{RS} implies that the integral $\operatorname{L}$-Hodge conjecture holds true for $X$, i.e., the surjectivity of $\operatorname{CH}_{\et}^3(X)\twoheadrightarrow \operatorname{Hdg}^{6}(X,\mathbb{Z})$.
\end{proof}

The same argument in the above proof, by using the fact that all $\kappa^i:\CH^i(X)\xrightarrow{\simeq} \CH_\et^i(X)$ are isomorphisms, leads to the main result of this section:

\begin{theorem}\label{thm:Hodge}
    Let $f:X\to \mathbb{P}^5$ be a smooth double cover branched along a smooth quartic hypersurface $B\in |\mathcal{O}_{\mathbb{P}^5}(4)|$. Then, the integral Hodge conjecture holds for $X$.
\end{theorem}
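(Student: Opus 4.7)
The plan is to factor the cycle class map through its étale counterpart,
\[
    c^i\colon \CH^i(X)\xrightarrow{\kappa^i}\CH^i_\et(X)\xrightarrow{c^i_\et}\operatorname{Hdg}^{2i}(X,\Z),
\]
and to show surjectivity of each factor for every $0\leq i\leq 5$. Since every cohomology group $\operatorname{H}^{2i}(X,\Z)\simeq \Z$ is torsion-free and rationally generated by the algebraic class $h^i$, the rational Hodge conjecture holds trivially in all codimensions; by the Rosenschon--Srinivas equivalence, this already makes $c^i_\et$ surjective onto $\operatorname{Hdg}^{2i}(X,\Z)$. The problem thus reduces to showing that $\kappa^i$ is surjective (in fact bijective) for every $i$, after which $c^i=c^i_\et\circ \kappa^i$ is surjective and the theorem follows.

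The cases $i=0,1$ are automatic, $i=5$ is Suslin rigidity (Proposition~\ref{lemGe}), and $i=2$ holds because $\operatorname{H}^3_{\operatorname{nr}}(X,\Q/\Z(2))=0$, as deduced earlier from the $\CH_0$-decomposition of the diagonal on $X$. The case $i=4$ has been handled in the bulleted list preceding the theorem, using $\operatorname{Griff}^4(X)=0$ and the triviality of the torsion in $\CH_1(X)_{\alg}$. The crux is therefore $i=3$, which I would split into a kernel and a cokernel analysis for $\kappa^3$.

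For injectivity I would use the identifications $\ker(\kappa^3)\simeq \operatorname{Griff}^3(X)\simeq \operatorname{H}^1(X,\mathcal{H}^4(\Z))$ established in this section. The proposition showing that $\operatorname{Griff}^3(X)$ is finite reduces the task to proving that $\operatorname{H}^1(X,\mathcal{H}^4(\Z))$ is torsion-free, which follows from the universal coefficient sequence
\[
    0 \to \operatorname{H}^4_{\operatorname{nr}}(X,\Z)\otimes \Z/n \to \operatorname{H}^4_{\operatorname{nr}}(X,\Z/n) \to \operatorname{H}^1(X,\mathcal{H}^4(\Z))[n]\to 0
\]
together with the vanishing $\operatorname{H}^4_{\operatorname{nr}}(X,\Q/\Z)=0$ from the penultimate proposition; a finite torsion-free group being trivial, $\operatorname{Griff}^3(X)=0$. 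For surjectivity I would invoke the embedding $\operatorname{coker}(\kappa^3)\hookrightarrow \mathbb{H}^{6}_{\operatorname{Zar}}(X,\tau_{\geq 5}R\pi_*\Z_X(3)_\et)$ together with the associated spectral sequence, whose only contributions to total degree six are $E_2^{0,6}=\operatorname{H}^5_{\operatorname{nr}}(X,\Q/\Z(3))$, vanishing by the $\mathbb{P}^1$-fibration argument used for $\operatorname{H}^5_{\operatorname{nr}}(X,\Q/\Z(5))$, and $E_2^{1,5}=\operatorname{H}^1(X,\mathcal{H}^4_\et(\Q/\Z(3)))$, which I would control by combining the already proved triviality of $\operatorname{H}^1(X,\mathcal{H}^4(\Z))$ with the Bloch--Ogus formalism and universal coefficient sequences, reducing again to the established vanishings $\operatorname{H}^4_{\operatorname{nr}}(X,\Q/\Z)=0$ and $\operatorname{H}^5_{\operatorname{nr}}(X,\Q/\Z)=0$.

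The principal technical obstacle is this last point: while the injectivity side is controlled directly by $\operatorname{H}^4_{\operatorname{nr}}(X,\Q/\Z)=0$, killing the cokernel term $E_2^{1,5}$ requires a finer analysis of the Bloch--Ogus spectral sequence with the coefficient system $\Q/\Z(3)$. Once this is dealt with, $\kappa^3$ is an isomorphism, all $\kappa^i$ are surjective, and the integral Hodge conjecture for $X$ follows by assembling the different codimensions.
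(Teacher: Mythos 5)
Your proposal follows essentially the same route as the paper: the rational Hodge conjecture for $X$ (via the finite cover of $\mathbb{P}^5$ in odd dimension) combined with the Rosenschon--Srinivas equivalence gives surjectivity of $c^i_\et$ onto $\operatorname{Hdg}^{2i}(X,\Z)$, and the comparison isomorphisms $\CH^i(X)\simeq\CH^i_\et(X)$ established earlier in the section (in particular the vanishing of $\operatorname{H}^4_{\operatorname{nr}}(X,\Q/\Z)$ and $\operatorname{H}^5_{\operatorname{nr}}(X,\Q/\Z)$ controlling $\kappa^3$) complete the argument exactly as in the paper's proof. The technical point you flag about the $E_2^{1,5}$ contribution to $\operatorname{coker}(\kappa^3)$ is present, and treated no more explicitly, in the paper itself.
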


\begin{remark}\label{rmk:refunramified}
    We can traduce the previous results in terms of refined unramified cohomology groups: for $0\leq i \leq 5$, thanks to \cite[Proposition 1.4]{AS24}, we have a long exact sequence
    \begin{align*}
        \ldots \to \CH^i(X)\xrightarrow{\kappa^i}\CH^i_\et(X)\to H^{2i-1}_{i-2,\text{nr}}(X,\Q/\Z(i))\to 0 
    \end{align*}
    where we can deduce that $\operatorname{coker}(\kappa^i)=H^{2i-1}_{i-2,\text{nr}}(X,\Q/\Z(i)))=0$. On the other hand we can deduce that $H^{2i-1}_{i-2,\text{nr}}(X,\Z(i)))=0$ due to the nullity of the Griffith's groups of $X$ and the isomorphism \cite[Theorem 1.6]{Sch23}.
\end{remark}

\newpage

\appendix

\section{Fano schemes of $r$-planes in double covers of $\mathbb{P}^n$} \label{appendixA}

In this appendix, we present some elementary results concerning Fano schemes for double covers of a projective space of dimension $n$ over an algebraically closed field $k$. The arguments are classical: the case of lines was treated by Tihomirov in \cite{Tih} for $n=3$ and $d=2$, and later in full generality in the Ph.D. thesis of Skauli \cite{ska}. For completeness and self-containment, we extend these results to the case of $r$-planes.

Let $X$ be an $n$-dimensional variety, and suppose that
\[
f: X \xrightarrow{2:1} \mathbb{P}^n
\]
is a double cover branched along a hypersurface $B \subset \mathbb{P}^n$ of degree $2d$. Then
\[
K_X = f^*K_{\mathbb{P}^n} + R, \qquad f^*B = 2R,
\]
where $R$ denotes the ramification locus of $f$. Such a variety $X$ admits the following models:
\begin{enumerate}
    \item As a hypersurface of degree $2d$ in $\mathbb{P}(1^{n+1},2)$.
    \item By means of the projective bundle $p:P := \mathbb{P}(\mathcal{O}_{\mathbb{P}^n} \oplus \mathcal{O}_{\mathbb{P}^n}(d)) \to \mathbb{P}^n$, defined by a section $s$ of $\mathcal{O}_P(2)$. Let $y_1$ be a generator of $\text{H}^0(\mathcal{O}_P(1)\otimes p^*\mathcal{O}_{\mathbb{P}^n}(-d))$ and $y_0 \in \text{H}^0(\mathcal{O}_P(1))$ an element not in the image of
    \[
    \text{H}^0(\mathcal{O}_P(1)\otimes p^*\mathcal{O}_{\mathbb{P}^n}(-d)) \otimes \text{H}^0(p^*\mathcal{O}_{\mathbb{P}^n}(d)) \longrightarrow \text{H}^0(\mathcal{O}_P(1)),
    \]
    then the double cover is obtainded as $f:=p|_X:X\to \mathbb{P}^n$ and the section defining $X$ is of the form $s=y_0^2-y_1^2 q(x_0,\ldots,x_n)$.
\end{enumerate}

The study of Fano scheme of lines of these varieties extend naturally to the Fano scheme of $r$-planes in $X$. We will closely follow the presentation in \cite[Chapter~V]{ska} and \cite{DM98}.

\begin{defi}\label{defi:lines}
Let $X$ be a smooth double cover of $\mathbb{P}^n$. An \emph{$r$-plane} in $X$ is a subvariety $h_r \subset X$ such that
\[
h_r \cdot f^*H^{r} = 1,
\]
where $H^r$ is the class of a codimension-$r$ hyperplane section of $\mathbb{P}^n$.
\end{defi}

By the push-pull formula, $f_*(h_r \cdot f^*H^r) = f_*h_r \cdot H^r = 1$. Hence, the image of $h_r$ in $\mathbb{P}^n$ is an $r$-plane in the usual sense. Necessarily, $h_r$ cannot be contained in $B$, since in that case $h_r \cdot f^*H^r > 1$.

\begin{prop}
If $h_r \subset X \subset P$ is an $r$-plane, then $h_r$ is defined in $P$ as the vanishing locus of a section of
\[
\mathcal{O}_P(1) \oplus p^*\mathcal{O}_{\mathbb{P}^n}(1)^{\oplus (n-r)}.
\]
\end{prop}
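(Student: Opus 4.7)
The plan is to reduce the problem, via the projection $p\colon P\to \mathbb{P}^n$, to a computation on the preimage of the linear span of $f(h_r)$. First I would show that $L:=f(h_r)\subset\mathbb{P}^n$ is an ordinary linear $r$-plane. Writing $f_\ast h_r=m\cdot L$ (where $m$ is the degree of the induced map $h_r\to L$), the projection formula gives
\[
1=(h_r\cdot f^\ast H^r)_X=(f_\ast h_r\cdot H^r)_{\mathbb{P}^n}=m\cdot(L\cdot H^r)_{\mathbb{P}^n},
\]
so both $m=1$ and $L\cdot H^r=1$. In particular $L$ is cut out by $n-r$ linear forms $\ell_1,\ldots,\ell_{n-r}\in H^0(\mathbb{P}^n,\mathcal{O}_{\mathbb{P}^n}(1))$, whose pullbacks define $p^{-1}(L)=\mathbb{P}(\mathcal{O}_L\oplus\mathcal{O}_L(d))\subset P$.

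Next I would localize the equation of $X$ to $p^{-1}(L)$: the defining section $s=y_0^2-y_1^2 q$ restricts to $y_0^2-y_1^2 q|_L$, so
\[
f^{-1}(L)=X\cap p^{-1}(L)=V\!\bigl(y_0^2-y_1^2 q|_L\bigr)\subset p^{-1}(L).
\]
Since $h_r\subset f^{-1}(L)$ is an irreducible subvariety of the same dimension $r$, it must be an irreducible component of $f^{-1}(L)$. If $f^{-1}(L)$ were irreducible, the induced cover $h_r\to L$ would have degree $2$, contradicting $m=1$. Thus $f^{-1}(L)$ is reducible, which forces the quadratic $y_0^2-y_1^2 q|_L$ to split. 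Since $L\cong\mathbb{P}^r$, the line bundle $\mathcal{O}_L(2d)$ has no torsion in its square-root structure, so splitting amounts to the existence of $r\in H^0(L,\mathcal{O}_L(d))$ with $q|_L=r^2$, giving
\[
f^{-1}(L)=V(y_0-y_1 r)\cup V(y_0+y_1 r),
\]
each factor being a section of $p^{-1}(L)\to L$.

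After replacing $r$ by $-r$ if necessary, I would take $h_r=V(y_0-y_1 r)\subset p^{-1}(L)$. Lifting $r$ to a section $\tilde r\in H^0(\mathbb{P}^n,\mathcal{O}_{\mathbb{P}^n}(d))$ (possible because $L$ is a linear subspace and the restriction map on degree-$d$ forms is surjective), the element $y_0-y_1\tilde r$ becomes a global section of $\mathcal{O}_P(1)$ (note that $y_1\tilde r$ lies in $H^0(\mathcal{O}_P(1)\otimes p^\ast\mathcal{O}_{\mathbb{P}^n}(-d))\otimes H^0(p^\ast\mathcal{O}_{\mathbb{P}^n}(d))\subset H^0(\mathcal{O}_P(1))$). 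Since $V(p^\ast\ell_1,\ldots,p^\ast\ell_{n-r})=p^{-1}(L)$ and the restriction of $y_0-y_1\tilde r$ to $p^{-1}(L)$ is $y_0-y_1 r$, we conclude
\[
h_r=V\!\bigl(y_0-y_1\tilde r,\; p^\ast\ell_1,\ldots,p^\ast\ell_{n-r}\bigr),
\]
expressing $h_r$ as the zero locus of a section of $\mathcal{O}_P(1)\oplus p^\ast\mathcal{O}_{\mathbb{P}^n}(1)^{\oplus(n-r)}$, as desired.

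The main obstacle is the reducibility step: one must rule out the possibility that $h_r$ is the whole fiber product $f^{-1}(L)$, and then extract a well-defined square root of $q|_L$ as a global section of $\mathcal{O}_L(d)$. Both rely crucially on $L\cong\mathbb{P}^r$ (so $\mathrm{Pic}(L)=\mathbb{Z}$ and any line bundle admits at most one square root up to sign), together with the numerical condition $h_r\cdot f^\ast H^r=1$ forcing the double cover over $L$ to split.
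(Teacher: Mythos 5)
Your proof is correct and follows essentially the same route as the paper's: both restrict to the $\mathbb{P}^1$-bundle $Z=p^{-1}(L)$ over the image plane $L=f(h_r)$ and exhibit $h_r$ there as the zero locus of a section of $\mathcal{O}_Z(1)$ which extends to $\mathcal{O}_P(1)$. Your explicit factorization $y_0^2-y_1^2\,q|_L=(y_0-y_1 r)(y_0+y_1 r)$, forced by the degree-one condition $h_r\cdot f^*H^r=1$, in fact supplies the justification that the paper leaves implicit for why $h_r$ lies in $|\mathcal{O}_Z(1)|$.
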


\begin{proof}
Let $h_r' \subset \mathbb{P}^n$ be the image of $h_r$. Let $s' \in \text{H}^0(\mathbb{P}^n,\mathcal{O}_{\mathbb{P}^n}(1)^{\oplus (n-r)})$ be a section whose zero locus contains $h_r'$. Then $p^*s' \in \text{H}^0(P,p^*\mathcal{O}_{\mathbb{P}^n}(1)^{\oplus (n-r)}))$ defines the ruled variety $Z := p^{-1}(h_r')$. The subvariety $h_r$ is contained in $Z$ and defined by a section of $\mathcal{O}_Z(1)$, which extends to $\mathcal{O}_P(1)$ by pushforward under $i:Z \hookrightarrow P$.
\end{proof}

Let $G = \mathbb{G}(r,n)$ be the Grassmannian of $r$-planes in $\mathbb{P}^n$ and $Q$ its tautological quotient bundle of rank $n-r$. The incidence correspondence
\[
\Phi := \{(h,p) \in \mathbb{G}(r,n) \times \mathbb{P}^n \mid p \in h\}
\]
is isomorphic to $\mathbb{P}(Q^\vee)$. Denote the projections by $\psi:\Phi \to \mathbb{G}(r,n)$ and $\rho:\Phi \to \mathbb{P}^n$.

\begin{prop}
The Grassmannian of $r$-planes in $P$ is
\[
G_P(r) \simeq \mathbb{P}(\mathcal{O}_{\mathbb{G}(r,n)} \oplus \mathrm{Sym}^d Q^\vee),
\]
of dimension $(r+1)(n-r) + \binom{d+r}{d}$.
\end{prop}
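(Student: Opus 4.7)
The plan is to realise $G_P(r)$ as a projective bundle over $\mathbb{G}(r,n)$, by decomposing an $r$-plane in $P$ into its image in $\mathbb{P}^n$ together with a section of $p\colon P\to \mathbb{P}^n$ lying above it. Once this two-step description is set up, the statement becomes a cohomology-and-base-change computation.

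First, by the argument of the preceding proposition applied with $X$ replaced by $P$, any $r$-plane $h_r\subset P$ is the zero locus of a section of $\mathcal{O}_P(1)\oplus p^*\mathcal{O}_{\mathbb{P}^n}(1)^{\oplus(n-r)}$. The $(n-r)$ linear factors cut out $p^{-1}(h_r')$ for an $r$-plane $h_r':=p(h_r)\in \mathbb{G}(r,n)$, and the remaining factor is a section of $\mathcal{O}_P(1)$ whose zero divisor on $p^{-1}(h_r')$ projects isomorphically onto $h_r'$. This yields a surjective morphism $\pi\colon G_P(r)\to \mathbb{G}(r,n)$, and reduces the proof to describing its fibres and globalising the construction over $\mathbb{G}(r,n)$.

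Next, fix $h_r'\in \mathbb{G}(r,n)$ and set $E:=\mathcal{O}_{\mathbb{P}^n}\oplus \mathcal{O}_{\mathbb{P}^n}(d)$, so that $p^{-1}(h_r')\simeq \mathbb{P}(E|_{h_r'})$ is a $\mathbb{P}^1$-bundle over $h_r'\simeq \mathbb{P}^r$. Divisors of $\mathcal{O}_P(1)$ on this projective bundle are parametrised, up to scalar, by $\mathbb{P}\bigl(H^0(p^{-1}(h_r'),\mathcal{O}_P(1))\bigr)$. Using the projection formula together with the vanishing $H^{>0}(\mathbb{P}^r,E|_{h_r'})=0$, one obtains
\[
  H^0\!\left(p^{-1}(h_r'),\mathcal{O}_P(1)\right)\ \simeq\ H^0(h_r',E|_{h_r'})\ \simeq\ k\ \oplus\ H^0(\mathbb{P}^r,\mathcal{O}(d)),
\]
so $\pi^{-1}(h_r')\simeq \mathbb{P}^{\binom{d+r}{d}}$.

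Finally, I would glue this fibrewise picture into a global statement. As $h_r'$ varies, the trivial summand $k$ assembles into $\mathcal{O}_{\mathbb{G}(r,n)}$, while the varying summand $H^0(h_r',\mathcal{O}_{h_r'}(d))$ is the fibre at $[h_r']$ of $\mathrm{Sym}^d Q^\vee$, where $Q$ is the tautological bundle as in the statement. Cohomology and base change, justified by the vanishing $H^1(\mathbb{P}^r,\mathcal{O}(d))=0$, guarantees that the identification is not merely fibrewise but takes place at the level of sheaves on $\mathbb{G}(r,n)$. Projectivising gives
\[
  G_P(r)\ \simeq\ \mathbb{P}\!\left(\mathcal{O}_{\mathbb{G}(r,n)}\ \oplus\ \mathrm{Sym}^d Q^\vee\right),
\]
and the dimension count follows immediately from $\dim \mathbb{G}(r,n)=(r+1)(n-r)$ and $\mathrm{rk}\,\mathrm{Sym}^d Q^\vee=\binom{d+r}{d}$.

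The main obstacle is the globalisation step: one must verify that the family of vector spaces $H^0(h_r',E|_{h_r'})$ arises as the fibres of a natural sheaf on $\mathbb{G}(r,n)$ compatible with base change along the universal $r$-plane, and that the sections of $\mathcal{O}(d)$ over the universal $\mathbb{P}^r$ are canonically $\mathrm{Sym}^d Q^\vee$. Once the relevant higher cohomology vanishing has been checked, this identification is formal, but it requires careful bookkeeping of the tautological data and of the pushforward $p_*\mathcal{O}_P(1)=E$ along the incidence correspondence.
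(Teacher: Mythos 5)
Your proposal is correct and follows essentially the same route as the paper: both identify the fibre of $\gamma\colon G_P(r)\to\mathbb{G}(r,n)$ over $h$ with $\mathbb{P}\bigl(H^0(p^{-1}(h),\mathcal{O}_P(1))\bigr)$ and then globalise via the pushforward $\psi_*\rho^*p_*\mathcal{O}_P(1)$ along the incidence correspondence, invoking base change. The paper packages the globalisation through the fibre product $\Phi'=\Phi\times_{\mathbb{P}^n}P$ rather than spelling out the fibrewise computation $H^0(h,E|_h)\simeq k\oplus H^0(\mathbb{P}^r,\mathcal{O}(d))$, but the content is identical.
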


\begin{proof}
Consider the Cartesian square
\[
\begin{tikzcd}
\Phi' := \Phi \times_{\mathbb{P}^n} P \arrow{d}{u} \arrow{r}{\rho'} & P \arrow{d}{p} \\
\Phi \arrow{r}{\rho} & \mathbb{P}^n
\end{tikzcd}
\]
and set $t = \psi \circ u:\Phi' \to \mathbb{G}(r,n)$. A fiber of $\gamma:G_P(r) \to \mathbb{G}(r,n)$ is the restriction of $|\mathcal{O}_P(1)|$ to $Z_h := p^{-1}(h)$, i.e., the set of $r$-planes in $t^{-1}(h)$. Thus
\[
\gamma^{-1}(h) \simeq \mathbb{P}\left(\left(t|_{Z_h}\right)_*\mathcal{O}_P(1)\right) \simeq \mathbb{P}\left(t_* \rho'^* \mathcal{O}_P(1) \big|_h \right).
\]
By base change and the formula for projective bundles,
\[
G_P(r) \simeq \mathbb{P}\left(\psi_* \rho^* p_* \mathcal{O}_P(1)\right) \simeq \mathbb{P}(\mathcal{O}_{\mathbb{G}(r,n)} \oplus \mathrm{Sym}^d Q^\vee).
\]
The dimension formula follows since this is the projectivization of a vector bundle of rank $\binom{d+r}{d} + 1$ over $\mathbb{G}(r,n)$.
\end{proof}

\begin{defi}
Let $X$ be a double cover of $\mathbb{P}^n$ ramified along a hypersurface of degree $2d$. We define the \emph{Fano scheme of $r$-planes} of $X$ as
\[
F_r(X) := \{\, h_r \in G_P(r) \mid h_r \subset X \,\} \subset G_P(r).
\]
\end{defi}

At a smooth point $h_r \in F_r(X)$, the Zariski tangent space is $H^0(N_{h_r/X})$, where $N_{h_r/X}$ denotes the normal bundle. From the short exact sequence
\[
0 \to N_{h_r/X} \to N_{h_r/P}|_{h_r} \to N_{X/P}|_{h_r} \to 0,
\]
and using that $X$ and $h_r$ are cut out respectively by sections of $\mathcal{O}_P(2)$ and $\mathcal{O}_P(1) \oplus p^*\mathcal{O}_{\mathbb{P}^n}(1)^{\oplus (n-r)}$, we obtain
\[
N_{X/P} = \mathcal{O}_P(2)|_X, \qquad N_{h_r/P} = \left(\mathcal{O}_P(1) \oplus p^*\mathcal{O}_{\mathbb{P}^n}(1)^{\oplus (n-r)}\right)\big|_{h_r}.
\]
Thus,
\[
0 \to N_{h_r/X} \to \mathcal{O}_{h_r}(d) \oplus \mathcal{O}_{h_r}(1)^{\oplus (n-r)} \xrightarrow{\alpha} \mathcal{O}_{h_r}(2d) \to 0.
\]

Let $\mathfrak{X} \subset \mathbb{P}(H^0(\mathcal{O}_P(2)))$ be the parameter space of double covers of $\mathbb{P}^n$. Consider the incidence correspondence
\[
I_r := \{\, (h_r, X) \in G_P(r) \times \mathfrak{X} \mid h_r \subset X \,\}
\]
with projections $p_G: I_r \to G_P(r)$ and $p_{\mathfrak{X}}: I_r \to \mathfrak{X}$. The fiber $p_{\mathfrak{X}}^{-1}(X)$ is $F_r(X)$, while $p_G^{-1}(h_r)$ consists of double covers containing $h_r$.  

Since $\mathfrak{X} \simeq \mathrm{Sym}^{2d}(\mathcal{O}_{\mathbb{P}^n} \oplus \mathcal{O}_{\mathbb{P}^n}(d))$, one finds that $p_G^{-1}(h_r)$ is isomorphic to
\[
\ker\left\{ \mathrm{Sym}^{2d}\!\left((\mathcal{O}_{\mathbb{P}^n} \oplus \mathcal{O}_{\mathbb{P}^n}(d))^\vee\right) \to \mathrm{Sym}^{2d}\!\left((\mathcal{O}_P(1) \oplus p^*\mathcal{O}_{\mathbb{P}^n}(1)^{\oplus (n-r)})^\vee\right) \right\}.
\]
This kernel has codimension $\binom{2d+r}{2d}$, hence $I_r$ has codimension $\binom{2d+r}{2d}$ in $G_P(r) \times \mathfrak{X}$. It follows that the \emph{expected dimension} of $F_r(X)$ is
\[
\delta(n,d,r) := (r+1)(n-r) + \binom{d+r}{d} - \binom{2d+r}{2d}.
\]

\begin{prop}
Let $X$ be a double cover of $\mathbb{P}^n$ of degree $d$.
\begin{enumerate}
    \item If $\delta(n,d,r) < 0$, then for a general $X$ the scheme $F_r(X)$ is empty.
    \item If $\delta(n,d,r) \geq 0$, then $F_r(X)$ is non-empty.
\end{enumerate}
\end{prop}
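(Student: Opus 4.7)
The plan is to study the incidence correspondence $I_r \subset G_P(r) \times \mathfrak{X}$ already set up in the excerpt, together with the two projections $p_G$ and $p_{\mathfrak{X}}$, and to reinterpret $F_r(X)$ as the zero locus of a canonical section of a natural vector bundle on $G_P(r)$. Let $\mathcal{H} \subset G_P(r) \times P$ denote the universal $r$-plane, with projections $q : \mathcal{H} \to G_P(r)$ and $\pi : \mathcal{H} \to P$, and set
\[
E \;:=\; q_{*}\pi^{*}\mathcal{O}_P(2).
\]
Since $\mathcal{O}_P(2)|_{h_r} \simeq \mathcal{O}_{\mathbb{P}^r}(2d)$ for every $r$-plane $h_r$, cohomology and base change show that $E$ is locally free of rank $\binom{2d+r}{r}$. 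Any defining section $s \in H^{0}(P,\mathcal{O}_P(2))$ of $X$ induces $\tilde{s} \in H^{0}(G_P(r),E)$, and $F_r(X)$ agrees scheme-theoretically with $Z(\tilde{s})$.

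For part (1), I compute $\dim I_r$ through $p_G$. The fiber of $p_G$ over any $h_r$ is the linear subspace of $H^{0}(P,\mathcal{O}_P(2))$ vanishing along $h_r$, which has codimension $\binom{2d+r}{r}$ in $\mathfrak{X}$. Hence $I_r$ is irreducible of dimension $\dim G_P(r) + \dim\mathfrak{X} - \binom{2d+r}{r} = \dim\mathfrak{X} + \delta(n,d,r)$. If $\delta(n,d,r) < 0$, then $\dim I_r < \dim\mathfrak{X}$, and because $G_P(r)$ is projective the projection $p_{\mathfrak{X}}$ is proper, so $\mathrm{im}(p_{\mathfrak{X}})$ is closed of dimension at most $\dim I_r$. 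Therefore $\mathrm{im}(p_{\mathfrak{X}})$ is a proper closed subset of $\mathfrak{X}$ and $F_r(X) = p_{\mathfrak{X}}^{-1}(X) = \emptyset$ for a general $X \in \mathfrak{X}$.

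For part (2), with $\delta(n,d,r) \geq 0$ we have $\mathrm{rank}(E) \leq \dim G_P(r)$, and the question reduces to showing that $\tilde{s}$ must vanish somewhere on $G_P(r)$. A sufficient condition is that the top Chern class
\[
c_{\binom{2d+r}{r}}(E) \;\in\; H^{*}(G_P(r))
\]
does not vanish, since otherwise $E$ could split off a trivial line bundle and admit a nowhere vanishing section. I would compute this Chern class by exploiting the projective bundle structure $\gamma : G_P(r) = \mathbb{P}(\mathcal{O}_{\mathbb{G}(r,n)} \oplus \mathrm{Sym}^d Q^\vee) \to \mathbb{G}(r,n)$ established in the excerpt: pulling back $\mathcal{O}_P(2)$ along $\pi$ and pushing it down along $q$ expresses $E$ in terms of $\gamma^{*}(\mathrm{Sym}^{2d}Q^{\vee})$ twisted by powers of the relative hyperplane class of $\gamma$, and a Whitney-formula computation combined with Schubert calculus on $\mathbb{G}(r,n)$ yields an explicit, positive expression for the top Chern class of $E$ whenever $\delta(n,d,r)\geq 0$.

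\textbf{Main obstacle.} The decisive step is the non-vanishing of $c_{\binom{2d+r}{r}}(E)$ in part (2), which is a genuine positivity statement rather than a dimension count. If the direct Schubert-calculus computation proves cumbersome, I would fall back on a degeneration argument: exhibit by hand a single $r$-plane $h_r$ contained in a symmetric (possibly reducible) double cover $X_0 \in \mathfrak{X}$ together with unobstructed first-order deformations, verifying that the differential of $p_{\mathfrak{X}}$ at $(h_r,X_0)$ is surjective. Combined with properness and the dimension bound $\dim I_r = \dim \mathfrak{X} + \delta \geq \dim \mathfrak{X}$, this would force $p_{\mathfrak{X}}$ to dominate $\mathfrak{X}$ and yield the non-emptiness of $F_r(X)$ for general $X$.
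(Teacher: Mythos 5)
Your part (1) is exactly the paper's argument: the dimension count $\dim I_r=\dim\mathfrak{X}+\delta(n,d,r)$ through the fibers of $p_G$ (whose codimension $\binom{2d+r}{2d}$ is correct because the restriction $H^0(P,\mathcal{O}_P(2))\to H^0(h_r,\mathcal{O}_P(2)|_{h_r})\simeq H^0(\mathbb{P}^r,\mathcal{O}(2d))$ is surjective), followed by properness of $p_{\mathfrak{X}}$. No issues there.

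For part (2), your primary route has a genuine gap. You reduce non-emptiness to the non-vanishing of $c_{\mathrm{top}}(E)$ for $E=q_*\pi^*\mathcal{O}_P(2)$, and then assert that Schubert calculus ``yields an explicit, positive expression'' whenever $\delta(n,d,r)\geq 0$. But $E$ is only globally generated, hence nef, and for nef bundles the Fulton--Lazarsfeld positivity theorem gives $c_{\mathrm{top}}(E)\geq 0$, not $c_{\mathrm{top}}(E)>0$; strict positivity would require ampleness or some other input, and $E$ is not obviously ample (its space of global sections is just $H^0(P,\mathcal{O}_P(2))$, and sections such as $y_0^2$ have nontrivial but proper zero loci). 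In practice the standard way to prove $c_{\mathrm{top}}(E)\neq 0$ is to exhibit one pair $(h_r,X_0)$ at which the zero locus has the expected dimension --- which is precisely your ``fallback.'' Relatedly, note that the bound $\dim I_r\geq\dim\mathfrak{X}$ alone does \emph{not} force $p_{\mathfrak{X}}$ to dominate: the image could be a proper closed subset with positive-dimensional general fiber, so the explicit point with surjective differential is not optional. Your fallback --- producing an explicit $(h_r,X_0)\in I_r$ at which $p_{\mathfrak{X}}$ is smooth, then using properness to upgrade dominance to surjectivity --- is exactly the paper's proof (which delegates the explicit construction to Skauli and to Debarre--Manivel), and it in fact yields non-emptiness for \emph{every} $X$, not merely a general one. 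So the proposal is salvageable, but only by promoting the fallback to the main argument and actually carrying out the construction of the explicit pair.
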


\begin{proof}
If $\delta(n,d,r) < 0$, then $\dim I_r < \dim \mathfrak{X}$, so for a general $X$ the fiber $p_{\mathfrak{X}}^{-1}(X)$ is empty. The second statement follows from the the standard argument consisting in showing the existence of an explicit $(h_r,X)\in I_r$ such that $p_{\mathfrak{X}}$ is smooth at this point (see eg. \cite[Proposition V.3.8]{ska} or \cite[Théorème 2.1]{DM98}). 
\end{proof}

\begin{remark}\label{remark:Fano-scheme}
\begin{enumerate}
    \item The Fano scheme $F_r(X)$ is the zero locus of a section of $\mathcal{O}_{G_P(r)}(2) \otimes \gamma^* \mathrm{Sym}^{2d}(Q)$. Thus
    \[
    [F_r(X)] = c_{\binom{2d+r}{2d}}\!\left( \mathcal{O}_{G_P(r)}(2) \otimes \gamma^* \mathrm{Sym}^{2d}(Q) \right) \in \CH^*(G_P(r)).
    \]
    \item If $X \xrightarrow{m:1} \mathbb{P}^n$ is a cyclic covering of order $m$ ramified along a hypersurface of degree $d$, then $X$ can be described as the zero locus of a section of $\mathcal{O}_P(m)$. In this case, the expected dimension formula generalizes to
    \[
    \dim F_r(X) = (r+1)(n-r) + \binom{d+r}{d} - \binom{md+r}{md}.
    \]
\end{enumerate}
\end{remark}

\begin{prop}
Let $X \xrightarrow{2:1} \mathbb{P}^n$ be a smooth double covering branched along a hypersurface $B \subset \mathbb{P}^n$ of degree $2d$. Fix $r \geq 1$. Assume $F_r(X)$ is smooth, then the canonical bundle of $F_r(X)$ is
\[
\omega_{F_r(X)} \simeq \left( \gamma^*\mathcal{O}_{\mathbb{G}(r,n)}(a+b-n-1) \otimes \mathcal{O}_{G_P(r)}\!\left( 2\binom{2d+r}{2d} - \binom{d+r}{d} - 1 \right) \right)\big|_{F_r(X)},
\]
where
\[
a = \begin{cases}
\frac{d(d+1)}{2}, & r=1, \\
\frac{r^{d+1} - r(d+1) + d}{(r-1)^2}, & r \neq 1,
\end{cases}
\quad
b = \begin{cases}
d(2d+1), & r=1, \\
\frac{r^{2d+1} - r(2d+1) + 2d}{(r-1)^2}, & r \neq 1.
\end{cases}
\]
\end{prop}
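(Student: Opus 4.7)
The plan is to apply adjunction to the closed immersion $\iota\colon F_r(X)\hookrightarrow G_P(r)$. By Remark~\ref{remark:Fano-scheme}, $F_r(X)$ is cut out by a regular section of the rank-$\binom{2d+r}{2d}$ vector bundle
\[
\mathcal{N}\;:=\;\mathcal{O}_{G_P(r)}(2)\otimes \gamma^{*}\operatorname{Sym}^{2d}Q,
\]
so that $\omega_{F_r(X)}\simeq (\omega_{G_P(r)}\otimes\det\mathcal{N})|_{F_r(X)}$, and the strategy is to compute the two factors independently.

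To handle $\omega_{G_P(r)}$ I would use the projective bundle presentation $\gamma\colon G_P(r)=\mathbb{P}_{\mathbb{G}(r,n)}(\mathcal{O}\oplus \operatorname{Sym}^{d}Q^{\vee})\to \mathbb{G}(r,n)$. The relative canonical bundle formula yields
\[
\omega_{G_P(r)/\mathbb{G}(r,n)}\simeq \mathcal{O}_{G_P(r)}\!\left(-\tbinom{d+r}{d}-1\right)\otimes \gamma^{*}\!\det\!\bigl(\operatorname{Sym}^{d}Q^{\vee}\bigr),
\]
which combines with $\omega_{\mathbb{G}(r,n)}\simeq \mathcal{O}_{\mathbb{G}(r,n)}(-n-1)$. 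In turn, the determinant of $\mathcal{N}$ splits as
\[
\det\mathcal{N}\simeq \mathcal{O}_{G_P(r)}\!\left(2\tbinom{2d+r}{2d}\right)\otimes \gamma^{*}\!\det\!\bigl(\operatorname{Sym}^{2d}Q\bigr),
\]
and collecting the $\mathcal{O}_{G_P(r)}$-exponents immediately produces the stated coefficient $2\binom{2d+r}{2d}-\binom{d+r}{d}-1$.

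To identify the $\mathcal{O}_{\mathbb{G}(r,n)}$-exponent $a+b-n-1$, one must express $\det(\operatorname{Sym}^{d}Q^{\vee})$ and $\det(\operatorname{Sym}^{2d}Q)$ as multiples of the Plücker polarization $\mathcal{O}_{\mathbb{G}(r,n)}(1)$. This rests on the standard identity $\det(\operatorname{Sym}^{m}V)=(\det V)^{c_m}$, valid for a bundle $V$ of rank $r+1$, where $c_m$ is determined by a torus weight computation on the monomial basis of $\operatorname{Sym}^{m}V$. Recognising $c_m$ as a sum of the form $\sum_{j=1}^{m} j\,r^{m-j}$ and evaluating it in closed form produces the case-by-case expressions stated in the proposition, namely $m(m+1)/2$ when $r=1$ and $(r^{m+1}-(m+1)r+m)/(r-1)^{2}$ when $r\neq 1$; substituting $m=d$ and $m=2d$ yields $a$ and $b$ respectively, while the shift $-n-1$ comes from $\omega_{\mathbb{G}(r,n)}$.

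The main challenge is purely of a bookkeeping nature: one must pin down Grothendieck's convention for the projective bundle $\mathbb{P}_{\mathbb{G}(r,n)}(\mathcal{E})$, identify the Plücker polarization $\mathcal{O}_{\mathbb{G}(r,n)}(1)$ with the determinant of the appropriate tautological (sub- or quotient) bundle, and consistently track the signs arising from the duality $Q\leftrightarrow Q^{\vee}$. Once these choices are fixed, the proposition follows from a direct application of the adjunction formula to $\iota$ together with the relative canonical bundle formula for the projective bundle $\gamma$.
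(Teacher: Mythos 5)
Your strategy coincides with the paper's own proof step for step: adjunction for the zero locus of the regular section of $\mathcal{O}_{G_P(r)}(2)\otimes\gamma^{*}\operatorname{Sym}^{2d}Q$, the relative canonical bundle formula for the projective bundle $\gamma\colon G_P(r)=\mathbb{P}(\mathcal{O}\oplus\operatorname{Sym}^{d}Q^{\vee})\to\mathbb{G}(r,n)$, and the reduction of both twists to determinants of symmetric powers. The $\mathcal{O}_{G_P(r)}$-exponent $2\binom{2d+r}{2d}-\binom{d+r}{d}-1$ comes out exactly as in the paper, so there is no difference of method to report.

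There is, however, a genuine problem in the one step that carries the numerical content, namely the exponent $c_m$ in $\det(\operatorname{Sym}^{m}V)=(\det V)^{c_m}$ for $V$ of rank $r+1$. The torus-weight computation you invoke gives $c_m=\sum_{j=1}^{m} j\binom{m-j+r-1}{r-1}=\binom{m+r}{r+1}$: the weight $j$ of the first Chern root is counted once for each monomial of degree $m-j$ in the remaining $r$ variables, and there are $\binom{m-j+r-1}{r-1}$ of these, not $r^{m-j}$. The two expressions agree for $r=1$ (both give $m(m+1)/2$, hence the stated $a$ and $b$ in that case, which is the case used elsewhere in the paper), but already for $r=2$, $m=3$ your sum gives $11$ while the correct value is $\binom{5}{3}=10$. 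Since $\sum_{j=1}^{m}j\,r^{m-j}$ is precisely the closed form $(r^{m+1}-(m+1)r+m)/(r-1)^{2}$ appearing in the proposition, your computation does reproduce the stated $a$ and $b$ — but a correct weight count yields $a=\binom{d+r}{r+1}$ and $b=\binom{2d+r}{r+1}$ instead, so for $r\geq 2$ the discrepancy points at the displayed formula itself rather than at a divergence between your argument and the paper's. A second point you explicitly defer but which is not mere bookkeeping: $a$ arises from $\det(\operatorname{Sym}^{d}Q^{\vee})$ and $b$ from $\det(\operatorname{Sym}^{2d}Q)$, which are respectively a negative and a positive multiple of the same Plücker class $\det Q$; they therefore cannot both enter the exponent $a+b-n-1$ with the sign $+$, and this sign must be settled before the final formula can be asserted.
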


\begin{proof}
By Remark~\ref{remark:Fano-scheme}, there exists $s \in H^0(G_P(r), \mathcal{O}_{G_P(r)}(2) \otimes \gamma^*\mathrm{Sym}^{2d}(Q))$ with $F_r(X) \simeq V(s)$. Thus
\[
\omega_{F_r(X)} \simeq \left( \omega_{G_P(r)} \otimes \det \mathcal{F} \right)\big|_{F_r(X)},
\]
where $\mathcal{F} := \mathcal{O}_{G_P(r)}(2) \otimes \gamma^*\mathrm{Sym}^{2d}(Q)$.

The morphism $\gamma: G_P(r) \to \mathbb{G}(r,n)$ is the $\mathbb{P}^{\binom{d+r}{d}}$-bundle obtained as the projectivization of $\mathcal{E} := \mathcal{O}_{\mathbb{G}(r,n)} \oplus \mathrm{Sym}^d(Q^\vee)$. Using the splitting principle, one checks
\[
\det(\mathcal{E})\simeq \det(\mathrm{Sym}^d(Q^\vee)) \simeq \mathcal{O}_{\mathbb{G}(r,n)}(a).
\]
Since $\omega_{\mathbb{G}(r,n)} \simeq \mathcal{O}_{\mathbb{G}(r,n)}(-n-1)$, we have
\[
\omega_{G_P(r)} = \gamma^*\mathcal{O}_{\mathbb{G}(r,n)}(a-n-1) \otimes \mathcal{O}_{G_P(r)}\!\left(-\binom{d+r}{d} - 1\right).
\]

Similarly,
\[
\det(\mathcal{F}) \simeq \mathcal{O}_{G_P(r)}\!\left(2\binom{2d+r}{2d}\right) \otimes \gamma^*\mathcal{O}_{\mathbb{G}(r,n)}(b).
\]
Combining, we obtain
\[
\omega_{F_r(X)} \simeq \left( \gamma^*\mathcal{O}_{\mathbb{G}(r,n)}(a+b-n-1) \otimes \mathcal{O}_{G_P(r)}\!\left( 2\binom{2d+r}{2d} - \binom{d+r}{d} - 1 \right) \right)\big|_{F_r(X)},
\]
as long as $F_r(X)$ is smooth.
\end{proof}

\printbibliography[title={Bibliography}]

\info
\end{document}